\newcolumntype{H}{>{\setbox0=\hbox\bgroup}c<{\egroup}@{}}
\newcommand{\R}{{\mathbb R}}
\newcommand{\E}{{\mathbb E}}
\renewcommand{\P}{\mathbb{P}}
\newcommand{\N}{{\mathbb N}}
\newcommand{\eps}{\varepsilon}
\newcommand{\X}{\mathcal X}
\newcommand\norm[1]{\left\lVert#1\right\rVert}
\newtheorem{theorem}{Theorem}[section]
\newtheorem{proposition}[theorem]{Proposition}
\newtheorem{lemma}[theorem]{Lemma}
\newtheorem{remark}[theorem]{Remark}
\newtheorem*{remark*}{Remark}
\newtheorem*{condition*}{Condition}
\newtheorem*{definition*}{Definition}
\newtheorem{definition}{Definition}
\newtheorem{assumption}{Assumption}
\numberwithin{equation}{section}
\newcounter{rcnt}[section]
\newcommand{\rem}[1]{}
\newcounter{desccount}
\newcommand{\descref}[1]{\hyperref[#1]{#1}}
\begin{document}

\sloppy

\begin{frontmatter}
\title{Towards multi-purpose locally differentially private synthetic data release via spline wavelet plug-in estimation}
\runtitle{Locally differentially private synthetic data release}

\begin{aug}
\author[A]{\fnms{Thibault}~\snm{Randrianarisoa}
\ead[label=e1]{t.randrianarisoa@utoronto.ca}\thanksref{t1}} ,
\author[B]{\fnms{Lukas}~\snm{Steinberger}\ead[label=e2]{lukas.steinberger@univie.ac.at}\thanksref{t2}\orcid{0000-0002-2376-114X}}
\\
\and
\author[C]{\fnms{Botond}~\snm{Szabo}\ead[label=e3]{botond.szabo@unibocconi.it}\thanksref{t1} \orcid{0000-0002-5526-8747}}
\address[A]{Department of Statistical Sciences,
University of Toronto 
\printead[presep={ ,\ }]{e1}}

\address[B]{Department of Statistics and OR,
University of Vienna 
\printead[presep={,\ }]{e2}}
\address[C]{Department of Decision Sciences and BIDSA,
Bocconi University
\printead[presep={,\ }]{e3}}

\thankstext{t1}{Co-funded by the European Union (ERC, BigBayesUQ, project number:
101041064). Views and opinions expressed are however those of the author(s) only and do not
necessarily reflect those of the European Union or the European Research Council. Neither the
European Union nor the granting authority can be held responsible for them.}
\thankstext{t2}{Supported by the Austrian Science Fund (FWF) Project: I 5484-N,
as part of the Research Unit 5381 of the German Research Foundation.}
\end{aug}

\begin{abstract}
We develop plug-in estimators for locally differentially private semi-parametric estimation via spline wavelets. The approach leads to optimal rates of convergence for a large class of estimation problems that are characterized by (differentiable) functionals $\Lambda(f)$ of the true data generating density $f$. The crucial feature of the locally private data $Z_1,\dots, Z_n$ we generate is that it does not depend on the particular functional $\Lambda$ (or the unknown density $f$) the analyst wants to estimate. Hence, the synthetic data can be generated and stored a priori and can subsequently be used by any number of analysts to estimate many vastly different functionals of interest at the provably optimal rate. In principle, this removes a long standing practical limitation in statistics of differential privacy, namely, that optimal privacy mechanisms need to be tailored towards the specific estimation problem at hand.
\end{abstract}

\begin{keyword}[class=MSC]
\kwd[Primary ]{62G05}
\kwd{62B86}
\kwd[; secondary ]{62G20}
\kwd{62G07}
\end{keyword}

\begin{keyword}
\kwd{Local differential privacy}
\kwd{semi-parametric estimation}
\kwd{plug-in estimation}
\end{keyword}

\end{frontmatter}

\section{Introduction}

Differential privacy \citep[][]{Dwork06b, Dwork06, Evfim03} was conceived 20 years ago as a once-and-for-all defense against any kind of possible data privacy breach. Already in these early references the idea of synthetic data release by private histograms was envisioned. It means that a new, and in some sense artificial, data set is (randomly) generated from sensitive original data such that these new data can be made public without any concern for privacy violation but contain similar statistical information. In the last decade, however, mathematical statisticians have demonstrated in a multitude of papers \citep[cf.][to mention only a few]{BerrettButucea19,Acharya21,Cai21, ButuceaPrivacy, Duchi17, Lalanne23, Rohde20, Yu23, Steinberger24} that statistically optimal privacy mechanisms have to be tailor made for the specific inference problem at hand. In other words, no single private data release mechanism can be simultaneously optimal for many different  estimation problems.

To give a very simple example, in the local paradigm of DP, where there is no trusted curator available, when original sensitive iid data are $X_1,\dots, X_n$ and the goal is moment estimation (that is, $\theta = \E[X^k]\in\R$) on a sample space $\mathcal X\subseteq[-M,M]$, we can achieve the optimal dependence of the risk on sample size $n$ and the privacy budget $\alpha$ by simple local Laplace addition
$$
Z_{i,k} = X_i^k + Lap\left(\frac{\alpha}{2M^k}\right)
$$
and then averaging $\hat{\theta}_n^{(k)} = \frac1n\sum_{i=1}^n Z_{i,k}$.
If an analyst has retrieved, say, $Z_{i,1}$, $i=1,\dots, n$, for mean estimation, then the privacy budget $\alpha$ for each of those $n$ data owners is already used up. If, at a later point, she decides to also estimate the second moment, then there are only two possible alternatives: either she queries $n$ new data owners to obtain a new sanitized sample $Z_{i,2}$, $i=n+1,\dots, 2n$, which may be expensive, or she can try to use the $Z_{i,1}$, $i=1,\dots,n$, also for the new estimation task. The latter option is, indeed, feasible by a deconvolution approach, but it is clearly statistically much less efficient than using the $Z_{i,2}$. Notice that if the analyst had first retrieved $Z_{i,2}$, $i=1,\dots, n$, and then later decided to estimate also the mean, then, depending on the distribution of $X_i$, this task may be impossible altogether without querying new data owners. For instance, if $\P(X_i=1)=(\theta+1)/2=1-\P(X_i=-1)$ then $\E[X_i] = \theta\in[-1,1]$ but the distribution of $X_i^2 = 1$ does not depend on $\theta$.

In light of the above, it would be highly desirable to query the $n$ data owners only once to obtain a sanitized synthetic data set $Z_1, \dots, Z_n$ which can subsequently be used in a statistically optimal way by analysts for arbitrary inference tasks. In this paper we provide a proof of concept that this can, indeed, be done for a large class of possible inference problems.

A natural approach is that of (semiparametric) plug-in estimation. If the sensitive data $X_1, \dots, X_n$ are iid from some smooth density $f\in\mathcal F$ on $[0,1]$, then we let $\Lambda:\mathcal F \to\R$ denote some functional of interest. We construct a local privacy mechanism that turns $X_i$ into a private view $Z_i$ on some appropriate space $\mathcal Z$, such that these sanitized data can be used to compute a density estimator $\hat{f}_n$ for which the plug-in rule $\Lambda(\hat{f}_n)$ achieves the (private) minimax rate for estimating $\Lambda(f)$ (among all locally differentially private estimators of $\Lambda(f)$ and not only the plug-in rules). This is shown to hold for a large class of differentiable functionals $\Lambda$. In other words, no matter what the estimation task formalized by the functional $\Lambda$ is, the analyst can use the synthetic data $Z_1,\dots, Z_n$ to construct a minimax optimal estimator. Since $Z_1, \dots, Z_n$ are generated only once and protect $\alpha$ local differential privacy of data owners, there is no limit to the number of analysts or to the number of estimation tasks each analyst can solve. 

\subsection{Related results}

Needless to say, the locally private density estimation problem has been treated before by several authors \citep{Duchi17, Kroll19, ButuceaPrivacy, Gyorfi23, Kroll21, Kroll22, Schluttenhofer22}. However, not every private density estimator $\hat{f}_n$ is suitable for multi-purpose synthetic data release in the sense that it is optimal for a large class of estimation problems. First of all, if we want to allow also for functionals $\Lambda$ that involve derivatives of the true density $f$ (such as Fisher-Information; see Section~\ref{sec:Examples} for a list of examples), then the estimated density $\hat{f}_n:[0,1]\to \R$ should also be smooth, ruling out the classical histogram approach. Smoothness can be obtained, for instance, via kernel density estimation $\tilde{f}_n(t) = \frac{1}{n}\sum_{i=1}^n K_h(X_i-t)$, where $K_h(x) = \frac{1}{h}K(\frac{x}{h})$. However, it is not a trivial task to combine the kernel density approach with differential privacy. The problem arises from the fact that we can not simply add noise to the original data $X_i$ without running into the deconvolution problem mentioned above. An optimal randomization procedure should rather be applied to $K_h(X_i-t)$. This is straightforward by Laplace noise addition if the point $t$ is fixed. However, for estimation of the entire density, which is essential for multi-purpose data release, we should publish the whole randomized trajectory $t\mapsto Z_i(t) = K_h(X_i-t) + \Xi(t)$, where $\Xi$ is an appropriate stochastic process. \citet{Kroll19} explored this approach, but achieved only \emph{approximate} local differential privacy. 

Another practical difficulty with the kernel density idea is data storage and numerical differentiation. As we can never hope to generate and store the entire function $Z_i:[0,1]\to\R$ on a server (recall that $X_i$ has to remain hidden), we would have to a priori discretize and therefore limit the precision any analyst can achieve by specifying a grid of values $Z_i(t_1), \dots, Z_i(t_N)$. This situation is markedly different from, e.g., using a projection type approach where only sanitized (wavelet) coefficients need to be stored and differentiation could even be done analytically by the analyst.

Optimal rates for nonparametric density estimation under privacy constraints with a wavelet approach were obtained in \citet{ButuceaPrivacy}. However, they use a specific wavelet basis and it is unclear whether it is simultaneously optimal for estimation of derivatives and point evaluations of the density, required for our analysis. \citet{GoldsteinMesser92} developed a framework to extend nonparametric density estimators to semiparametric plug-in estimators and obtained optimal rates, but they only considered kernel estimators, which are not suitable for our purpose.

\subsection{Our contribution}

For the reasons mentioned above, we here follow a projection type approach with a B-spline basis as a practical and natural choice for estimation of a smooth density and its derivatives (cf. Section~\ref{sec:SplineEst}). More precisely, if $B_{k,j_n}$ are the B-spline functions with equispaced knot sequence of step $2^{-j_n}$ and degree $d$, we estimate $f$ by
\begin{equation}\label{eq:B-splineEst}
\hat{f}_n^{(j_n)}(t) = \sum_{k=1}^{2^{j_n}+d} \bar{Z}_k^{(j_n)} B_{k,j_n}(t),
\end{equation}
where $\bar{Z}_k^{(j_n)} = \frac1n\sum_{i=1}^n Z_{i,k,j_n}$ and $Z_{i,k,j_n} = e_{k,j_n}(X_i) + W_{i,k,j_n}$ for appropriate Laplace random variables $W_{i,k,j_n}$ and functions $e_{k,j_n}$. Even though this locally private procedure achieves optimal rates on atomic functionals (e.g., point evaluations of the density), it does not perform well enough for estimation of smooth functionals (e.g., functionals of integral form such as moments or integrals of functions of $f$, etc.) due to excessive noise accumulation \citep[the latter was also observed by][Section~5.2.2., who proposed a more refined noise addition scheme to fix the problem at least for density estimation in $L^2$]{Duchi17}. It turns out, however, that with the structure of a multiresolution analysis we can efficiently distribute noise across resolution levels, leading to a locally private density estimator that achieves optimal rates on both smooth as well as atomic functionals. See Remark \ref{rk: failure spline smooth} in Section \ref{sec:WaveletEst} for more details on this issue. Thus, in order to widen the class of estimation problems which can be solved by our synthetic private data, we extend the B-spline approach to spline wavelets \citep{jia2006stable} (cf. Section~\ref{sec:WaveletEst} for details) and consider an estimator of the form
\begin{equation}\label{eq:waveletEst}
    \hat{f}_n^{(j_n)}(t) = \sum_{j=j_0-1}^{j_n}\sum_{k\in\mathcal M_j} \bar{Z}_{jk} \tilde{\psi}_{jk}(t),
\end{equation}
where $Z_{ijk} = \psi_{jk}(X_i) + W_{ijk}$ and $\bar{Z}_{jk} = \frac1n\sum_{i=1}^n Z_{ijk}$.

Extending results by \citet{GoldsteinMesser92} we prove optimal rates of convergence and adaptation for locally private plug-in estimators based on spline wavelets.

\subsection{Practical advantages of the multi resolution analysis and adaptation}

Notice that there is another fundamental problem with the private estimation approach in \eqref{eq:B-splineEst} that is resolved by the wavelet estimator \eqref{eq:waveletEst}. In order to achieve the optimal rate of convergence the tuning parameter $j_n$ has to be chosen in dependence of the smoothness of the true data generating density (and in general also in dependence on the functional $\Lambda$ we are trying to estimate). In the notation of \eqref{eq:B-splineEst}, our multi-purpose synthetic data should therefore consist of $\mathbf{Z}^{(j_n)} = (\bar{Z}_1^{(j_n)}, \bar{Z}_2^{(j_n)}, \dots, \bar{Z}_{2^{j_n}+d}^{(j_n)})$, for some appropriate choice of $j_n$. For any given $j_n$, these data can be released such that $\alpha$-local differential privacy is protected. However, we cannot use the exact same data in order to adaptively choose $j_n$. To do that we would actually need to try different values of $j_n$ (e.g., model selection or Lepski's method) and fit models based on $\mathbf{Z}^{(j)}$, for $j\in\mathcal J$, say, resulting in the need for further locally private data generation. This comes at the price of a privacy loss. If $\mathbf{Z}^{(j)}$ can be generated with a privacy parameter $\alpha_j$ and $\alpha$ is the overall privacy budget, then we have to make sure that $\sum_{j\in\mathcal J}\alpha_j\le \alpha$. This is achieved, for instance, by setting $\alpha_j=\alpha/|\mathcal J|$, but of course estimation based on $\mathbf{Z}^{(j)}$ with tuning parameter $j$ and privacy budget $\alpha$ will lead to a faster rate than estimation based on the privacy budget $\alpha/|\mathcal J|$. Typically the loss is polynomial in $|\mathcal J|$, which is itself of logarithmic order $|\mathcal J| \asymp \log(n)$. Hence, the price of adaptation of this private estimation scheme is at least another poly-logarithmic term and we would also have to a priori specify the grid of values $\mathcal J$ which would subsequently be fixed for all data analysts. 

This additional poly-logarithmic cost of adaptation arising from the need of repeated private data generations may not seem to be a big issue theoretically, given that in certain cases there is the price of a logarithmic factor anyways, even when original data $X_1,\dots, X_n$ are observed. Furthermore, it is an open question whether local differential privacy necessarily causes a logarithmic price for adaptation \citep[cf.][Section~6]{ButuceaPrivacy}. However, with a wavelet basis \eqref{eq:waveletEst} the situation is a bit more convenient, at least from a practical point of view. In fact, we can release all the randomized data $\bar{Z}_{jk}$, $j\in\{j_0-1, j_0,\dots, j_n\}$, $k\in\mathcal M_j$, up to arbitrary $j_n$, at a constant privacy budget of $\alpha$ (cf. Proposition~\ref{prop:PrivacyWavelets}). Hence, we can pick a privacy level $\alpha$ and a very large number $j_{max}$, say, and release $\bar{Z}_{jk}$, $j\in\{j_0-1, j_0, \dots, j_{max}\}$, $k\in\mathcal M_j$, to protect $\alpha$ local differential privacy. Now any data analyst can subsequently use these data to compute \eqref{eq:waveletEst} as long as $j_n\le j_{max}$. In particular, they can also compute \eqref{eq:waveletEst} and derivatives thereof for different values of tuning parameters $j\in\mathcal J$, say, as long as $\max \mathcal J\le j_{max}$. See Section~\ref{sec:Adaptation} for further details, where we pick $j_{max}=(\log n)/3$.

\subsection{Organization of the paper}

In Section~\ref{sec:Model} we give a detailed description of the statistical model, the set of estimation problems characterized by a functional $\Lambda$ and the privacy constraint that needs to be satisfied. We also collect our basic definitions and notations. Section~\ref{sec:LowerBounds} contains private minimax lower bounds for our class of functionals $\Lambda$ and for the class of all locally private estimators, which we aim to achieve with our plug-in approach. In Section~\ref{sec:Background} we provide some background on B-splines and associated wavelet bases and summarize the approximation theoretic facts needed to prove our main results. Sections~\ref{sec:SplineEst} and \ref{sec:WaveletEst} contain the main results on optimality of our synthetic data release and plug-in estimators. Section~\ref{sec:SplineEst} deals with a simpler B-spline estimator that is, however, only optimal on atomic functionals. Section~\ref{sec:WaveletEst} then introduces the spline wavelet approach, which is subsequently shown to be optimal on atomic and smooth functionals. The optimal choice of resolution level $j_n$ for estimation of atomic functionals, however, depends on the unknown smoothness of the data generating density $f$. Thus, we introduce a fully data-driven choice of $j_n$ in Section~\ref{sec:Adaptation}. Finally, in Section~\ref{sec:Examples}, we list a few examples of common statistical functionals $\Lambda$ and show how they fit into our general private estimation framework. The proofs are deferred to the Appendix.

\section{Notations and model assumptions}
\label{sec:Model}

The original, possibly confidential, data $X_1,\dots,X_n$ are assumed to be independent and identically distributed random variables with Lebesgue density $f$ on $[0,1]$ and we want to estimate $\Lambda(f)$ for some functional $\Lambda$ (not necessarily linear) while ensuring local differential privacy. We denote by $P_f$ the associated probability measure on the sample space $([0,1],\mathcal B([0,1]))$ and by $\P_f = P_f^n$ the corresponding product measure. We assume in the following that, for $p$ a nonnegative integer, $f\in C^p[0,1]$ the set of continuous functions with $p$ or more continuous derivatives and denote $f^{(j)}$ the $j$--th derivative of $f$. Also, for $1\leq q<\infty$ and $\lambda=(\lambda_0,\dots,\lambda_p)$ a vector of finite measures on $[0,1]$, we note
\[ \norm{f}_{(q,p,\lambda)} = \sum_{j=0}^p \left(\int_0^1 |f^{(j)}|^q d\lambda_j\right)^{1/q},\]
and define $\norm{f}_{(\infty,p,\lambda)}$ similarly when $q=\infty$, replacing integrals with essential supremums. When $\lambda$ is the vector of Lebesgue measures, we drop the dependence on this vector in the previous display and write $\norm{f}_{(q,p)}$ for simplicity. We also introduce \[\mathcal{B}_{(q,p)}(M)\coloneqq \left\{f\in C^p[0,1]\colon\ \norm{f}_{(q,p)}<M\right\}\] the ball of radius $M$ in this metric. For some measure $G$ on $[0,1]$, we denote $\norm{\cdot}_{L^p(G)}$ the usual $L^p$-norm, and drop the dependence on $G$ when it is the Lebesgue measure. In the following, we restrict our attention to densities satisfying
\begin{equation}\label{eq: func space} f\in \mathcal{W}_p\coloneqq \left\{f\in C^p[0,1]:\ f\geq0,\ \int_0^1f=1\right\}\ \cap \ \mathcal{B}_{(\infty,p)}(M),\end{equation}
for some unknown $M>0$.
Also, we consider functionals $\Lambda$ that admit the following expansion for $0\leq m<p$ and $\lambda$ a vector of finite measures depending on $\Lambda$ but not $f$, 
\begin{equation}\label{eq: form functional}
    \Lambda(f+h) = \Lambda(f) + T_f(h) + O(\norm{h}^2_{(2,m,\lambda)}),
\end{equation}
for $f\in \mathcal{W}_p$, $h\in C^p[0,1]$ with $\norm{h}_{(\infty,m)}$ small enough and $T_f$ (the functional derivative of $\Lambda$, depending on $f$) a bounded linear functional on $C^p[0,1]$. As argued in \cite{GoldsteinMesser92}, the Hahn-Banach theorem and the Riesz representation theorem imply that
\begin{equation}\label{eq: Hahn-Banach}
    T_f(h)=\sum_{j=0}^p \int_0^1 h^{(j)} d\mu_j,
\end{equation} where $\mu_j$ is a finite signed Borel measure on $[0,1]$ (possibly depending on $f$). From Lebesgue's decomposition theorem, the finite measures $\nu$ we consider here can all be decomposed as
\[\nu = \nu_{\text{cont}}+\nu_{\text{sing}}+\nu_{\text{pp}},\]
where $\nu_{\text{cont}}$ is the absolutely continuous part, $\nu_{\text{sing}}$ is the singular continuous part and $\nu_{\text{pp}}$ is the pure point part (a discrete measure with a countable set of atoms). We restrict ourselves to situations where the pathological singular continuous part is null, $\nu_{\text{sing}}=0$, which will be the case in our practical examples. We also assume that all absolutely continuous measures have bounded densities from now on.

\begin{assumption}\label{assum: no singular continuous part}
    We assume the finite signed measures $\mu_j$ in the derivative \eqref{eq: Hahn-Banach} and $\lambda_j$ in the remainder term of \eqref{eq: form functional} are composed of only absolutely continuous parts, with Lebesgue densities bounded by quantities depending only on $M$ as in \eqref{eq: func space}, and discrete parts. Also, we assume that the total mass they assign to $[0,1]$ is upper bounded by a quantity depending only on $M$.
\end{assumption}

In the following, we consider differentiable functionals $\Lambda$ (of order $m$), whose decomposition \eqref{eq: form functional} is such that for any $j\leq p$, the nonnegative measure $|\mu_j|= \mu_j^++\mu_j^-$, where we use the Jordan decomposition $\mu_j= \mu_j^+-\mu_j^-$ (recall that these measures may depend on $f$), satisfies
\begin{equation}\label{eq:bound total var}\underset{f\in \mathcal{W}_p,\, j\leq p}{\sup}\ |\mu_j|([0,1])<\infty.\end{equation} 
This implies that the decomposition is uniform over $\mathcal{W}_p$ in the sense that
\begin{equation}\label{eq: bound deriv func}\underset{f\in \mathcal{W}_p}{\sup} \norm{T_f}_p\coloneqq \underset{f\in \mathcal{W}_p}{\sup}\quad  \underset{\norm{h}_{(\infty, p)}\leq 1}{\sup} \left|T_fh\right|<\infty.\end{equation}
Among these, we can identify the smooth functionals of order $m$ for which $T_f(h)=\int_0^1 h\ \omega_f$, for all $f\in\mathcal{W}_p$, and where the bounded measurable function $\omega_f$ depends on $f$ only and satisfies
\begin{equation}\label{eq: bound deriv omega}\underset{f\in\mathcal{W}_p}{\sup} \norm{\omega_f}_{L^\infty} < \infty.\end{equation}
We also study the atomic functionals of index $s$, $0\leq s\leq p$, which are such that for all $h\in C^p[0,1]$, $T_f$ can be represented as
$T_f(h)=\sum_{j=0}^{s_f} \int_0^1 h^{(j)} d\mu_{j,f}$
with $\mu_{s_f,f}$ having a discrete component $\delta_{s_f,f}$, and $s = \sup_{f\in\mathcal W_p} s_f$ being the largest integer for which such representation is valid. In the following, $C_p, C_{f}, C_{p,f}$, etc. denote constants depending on the subscript parameters only, and whose value may change from line to line. Also, for $a,b\in \mathbb{R}$, $a\vee b$ is the largest value of $a,b$ and $a\wedge b$ is the smallest one. If $a\leq b$ are integers in $\mathbb{Z}$, the integer interval $\left\{a,a+1,\dots,b\right\}$ is denoted $[\![ a,b]\!]$. For sequences $(a_n)_n,(b_n)_n$, we note $a_n\lesssim b_n$ if $a_n\leq Cb_n$ for some constant $C$ and $a_n\asymp b_n$ if $a_n\lesssim b_n$ and $b_n\lesssim a_n$.

Finally, we focus on the local paradigm of differential privacy, where data owners can generate sanitized views of their sensitive information ``on their local machine'' without sharing the original information with anybody else. For a privacy budget $\alpha>0$, we write $\mathcal Q_\alpha$ for the set of all Markov kernels $Q$ from $([0,1],\mathcal B([0,1]))$ to some measurable space $(\mathcal Z, \mathcal G)$ such that 
\begin{equation}\label{eq:LDP}
Q(A|x)\le e^\alpha Q(A|x'), \quad \text{for all } A\in\mathcal G \text{ and for all } x,x'\in[0,1].
\end{equation} 
Using such a privacy mechanism $Q\in\mathcal Q_\alpha$, data owners can independently generate sanitized versions $Z_i$ of their sensitive information $X_i$ by $Z_i\thicksim Q(\cdot|x)$ with $x=X_i$. In other words, $Q(\cdot|x)$ denotes the conditional distribution of $Z_i$ given $X_i=x$ and the marginal distribution of $Z_i$ is given by $QP_f := \int_\X Q(\cdot|x) P_f(dx)$. Also notice that $Z_1,\dots, Z_n$ are iid according to $QP_f$ and the conditional distribution of $(Z_1,\dots, Z_n)$ given $X_1=x_1, \dots, X_n=x_n$ is given by $Q(\cdot|x_1)\times\dots\times Q(\cdot|x_n)$, which satisfies the conventional definition of (central) differential privacy as in \citet{Dwork06}. Notice that $Q(\cdot|x)\ll Q(\cdot|x')$ for all $x,x'\in[0,1]$ and hence \eqref{eq:LDP} can be verified by checking whether
$$
\frac{dQ(\cdot|x)}{dQ(\cdot|x')}(z) \le e^\alpha.
$$
We write $\E$ and $\mathbb{P}$ to denote expectation and probability with respect to the joint distribution of all the private and public data $X_1,\dots, X_n$, $Z_1,\dots, Z_n$. To stress dependence on $Q$ and $f$, we sometimes also write $\E_{QP_f}$ or simply $\E_f$, if the privacy mechanism $Q$ is clear from the context.

\section{Minimax lower bounds}
\label{sec:LowerBounds}
In this section we prove private minimax lower bounds of estimation for large classes of functionals $\Lambda$ over $\mathcal W_p$. In subsequent sections we then show that these lower bounds can be achieved by our plug-in estimators. Notice that the collection of estimators considered in our minimax lower bounds is the collection of all measurable functions of the sanitized data $Z_1,\dots, Z_n$ rather than just the plug-in rules. 

We begin with a general private minimax lower bound of parametric order that holds for estimation of any differentiable and non-constant functional $\Lambda$ on a convex subset $\mathcal W\subseteq\mathcal W_p$. Notice that $\mathcal W_p$ itself is convex. The proof of the following result is deferred to Appendix~\ref{app:LB}.

\begin{theorem} \label{thm:LBparam}
Fix $n,p\in\N$, $\alpha>0$ and a convex set $\mathcal W\subseteq \mathcal W_p$. Let $\Lambda:\mathcal W \to \mathbb R$ be differentiable as in \eqref{eq: form functional}. If $\Lambda$ is not constant on $\mathcal W$, then there exist constants $c,C>0$ depending only on $\Lambda$ and $\mathcal W$, such that the minimax-risk of locally private estimation is lower bounded by
\begin{align*}
\inf_{Q\in\mathcal Q_\alpha} \inf_{\hat{\Lambda}_n} \sup_{f\in\mathcal W} \E_{QP_f}\left[|\hat{\Lambda}_n - \Lambda(f)|\right] \ge c \left(n(e^\alpha-1)^2\right)^{-1/2},
\end{align*}
provided that $n(e^\alpha-1)^2\ge C$. Here, the second infimum is over all estimators $\hat{\Lambda}_n$ that take the sanitized data $Z_1,\dots, Z_n \stackrel{iid}{\thicksim} QP_f$ as input.
\end{theorem}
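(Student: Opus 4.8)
The plan is to reduce the locally private estimation problem to a two-point testing problem along a one-dimensional parametric submodel and then invoke a private Le Cam / Assouad-type argument with the information contraction inequality for local privacy. First I would use non-constancy of $\Lambda$ on the convex set $\mathcal W$: pick $f_0, f_1 \in \mathcal W$ with $\Lambda(f_0)\ne\Lambda(f_1)$ and consider the segment $f_t = (1-t)f_0 + t f_1 \in \mathcal W$ for $t\in[0,1]$, which lies in $\mathcal W$ by convexity. Set $h = f_1 - f_0 \in C^p[0,1]$; note $\norm{h}_{(\infty,m)}$ and $\norm{h}_{(\infty,p)}$ are fixed finite constants. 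Using the differentiability expansion \eqref{eq: form functional} around $f_0$ applied with increment $th$, together with the uniform bound \eqref{eq: bound deriv func} on $T_f$ and the control on the remainder via \eqref{eq:bound total var}, I would argue that $t\mapsto\Lambda(f_t)$ is (at least near $t=0$, after rescaling $h$ if needed to make $\norm{h}_{(\infty,m)}$ small) a function whose increment over a window of length $\delta$ is of exact order $\delta$, so that $|\Lambda(f_\delta) - \Lambda(f_0)| \asymp \delta$ for $\delta$ in some fixed interval $(0,\delta_0]$; here is where non-constancy is crucially used to ensure the linear term $T_{f_0}(h)$ does not vanish (or, if it does on the whole segment, one perturbs the endpoints slightly — this case needs a short separate argument).

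Next I would set up the two hypotheses $P_{f_0}$ versus $P_{f_\delta}$ with $\delta = \delta_n$ to be chosen, and pass to the privatized marginals $Q P_{f_0}^{\otimes n}$ and $Q P_{f_\delta}^{\otimes n}$. The key privacy input is the information contraction inequality for $\alpha$-LDP channels (the van Trees / Duchi–Jordan–Wainwright-type bound), which gives, for any $Q\in\mathcal Q_\alpha$,
\[
\mathrm{KL}\bigl(QP_{f_0}\,\|\,QP_{f_\delta}\bigr) \;\le\; c_0\,(e^\alpha - 1)^2 \, \dtv(P_{f_0}, P_{f_\delta})^2 \;\le\; c_0\,(e^\alpha-1)^2\,\|f_0 - f_\delta\|_{L^1}^2 \;\lesssim\; (e^\alpha-1)^2\,\delta^2,
\]
using $\|f_0-f_\delta\|_{L^1} = \delta\|h\|_{L^1} \lesssim \delta$. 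Tensorizing over the $n$ iid coordinates, $\mathrm{KL}(QP_{f_0}^{\otimes n}\,\|\,QP_{f_\delta}^{\otimes n}) \lesssim n(e^\alpha-1)^2\delta^2$. Choosing $\delta_n \asymp \bigl(n(e^\alpha-1)^2\bigr)^{-1/2}$ (which is $\le\delta_0$ precisely when $n(e^\alpha-1)^2 \ge C$ for $C$ large, giving the stated side condition) makes this KL divergence bounded by a constant less than, say, $1/2$, hence the two privatized product measures are not testable with vanishing error: by Pinsker, $\dtv \le 1/2$, and the standard two-point lower bound gives that any estimator $\hat\Lambda_n$ incurs expected absolute error at least a constant multiple of the separation $|\Lambda(f_0) - \Lambda(f_\delta)| \asymp \delta_n \asymp (n(e^\alpha-1)^2)^{-1/2}$. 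Taking the infimum over $Q$ and $\hat\Lambda_n$ and the supremum over the two-point subfamily $\subseteq\mathcal W$ yields the claim with $c$ depending only on $\Lambda$ and $\mathcal W$ (through $\delta_0$, $\|h\|_{L^1}$, and the constant in $|\Lambda(f_\delta)-\Lambda(f_0)|\gtrsim\delta$).

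The main obstacle I anticipate is the first step: turning ``differentiable and non-constant'' into a genuine \emph{quantitative} lower bound $|\Lambda(f_\delta)-\Lambda(f_0)|\gtrsim\delta$ uniformly in $\delta\in(0,\delta_0]$. One must ensure that along \emph{some} admissible segment the linear term $T_{f_0}(h)$ is nonzero and dominates the $O(\delta^2)$ remainder; if $T_f(f_1-f_0)$ happens to vanish identically along the chosen segment, one needs to either choose a different pair of points in $\mathcal W$ or exploit that $\Lambda$ is non-constant to find a point where the derivative is nonzero in \emph{some} feasible direction (using that $\mathcal W$ is convex with nonempty interior in a suitable sense, or a direct comparison of values). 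A secondary technicality is checking that $f_\delta$ stays in $\mathcal W_p$ — nonnegativity, integrating to one, and the $\mathcal B_{(\infty,p)}(M)$ bound — which is automatic on the segment since $\mathcal W\subseteq\mathcal W_p$ and $\mathcal W$ is convex, so no separate verification is needed beyond invoking convexity of $\mathcal W$. The privacy-to-KL step is standard once phrased via the total-variation contraction for $\mathcal Q_\alpha$.
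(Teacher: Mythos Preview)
Your proposal is correct and follows essentially the same approach as the paper: a two-point Le Cam argument on a convex segment in $\mathcal W$, using the differentiability expansion to separate $\Lambda$-values linearly in the step size and an LDP information-contraction inequality to control the testing error. The paper packages the privacy step via the total-variation modulus of continuity (Corollary~3.1 of \cite{Rohde20}) rather than a direct KL bound, and it resolves the obstacle you flag---finding a direction with nonzero derivative---by a clean contradiction: if $T_f(g-f)=0$ for all $f,g\in\mathcal W$, then $t\mapsto\Lambda((1-t)f+tg)$ has vanishing derivative everywhere on $[0,1]$ and hence $\Lambda$ is constant, so non-constancy directly yields $f,g\in\mathcal W$ with $c:=T_f(g-f)\neq 0$.
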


For large $\alpha$ the bound in Theorem~\ref{thm:LBparam} can become arbitrarily small. Notice, however, that the classical lower bounds without a privacy constraint derived by \cite{GoldsteinMesser92} also apply here. Simply condition on the original data $X=(X_1,\dots, X_n)$ and use Jensen's inequality to get
\begin{align*}
\E_{QP_f}\left[|\hat{\Lambda}_n - \Lambda(f)|\right]
&=
\E\left[\E\left[|\hat{\Lambda}_n(Z) - \Lambda(f)|\Big|X\right]\right]
\ge
\E\left[\left|\E\left[\hat{\Lambda}_n(Z)\Big|X\right] - \Lambda(f)\right|\right]\\
&=
\E_{P_f}\left[\left| \tilde{\Lambda}_n - \Lambda(f)\right|\right],
\end{align*}
where $\tilde{\Lambda}_n$ is an estimator of $\Lambda(f)$ that takes the original data $X_1,\dots, X_n \stackrel{iid}{\thicksim} f$ as inputs.

In order to prove a tighter lower bound of nonparametric order, we need to make sure that our statistical model $\mathcal W\subseteq\mathcal W_p$ does not degenerate to a parametric class. The following notion is an adaptation of Definition~3.4 of \cite{GoldsteinMesser92}.

\begin{definition}\label{def:ND}
    Let $\mathcal W\subseteq \mathcal W_p$ and $\Lambda:\mathcal W \to \mathbb R$ be differentiable as in \eqref{eq: form functional} and atomic of index $s$. We say that $(\mathcal W, \Lambda)$ is \emph{non-degenerate} if for all $x_0\in[0,1]$ there exists a compactly supported function $\psi\in C^p(\R)$ with $\psi^{(s)}(0)\neq 0$ and a positive constant $\delta_0>0$, such that the following holds true: For all $f_0\in\mathcal W$ there exists $\eta_0>0$, such that for all $\eta\in(0,\eta_0)$ and all $\delta\in(0,\delta_0)$ the function
    $$
    f_1(x) := f_0(x) + \eta\delta^p\psi((x-x_0)/\delta), \quad x\in[0,1],
    $$
    belongs to $\mathcal W$. If this is not the case we call $(\mathcal W,\Lambda)$ \emph{degenerate}.
\end{definition}

For instance, any set of polynomials of bounded degrees must be degenerate because no non-trivial polynomial can have compact support. Lemma~\ref{lemma:W0} in Appendix~\ref{app:LB} shows that for a differentiable functional $\Lambda$ of arbitrary index $s$ and $\mathcal W^{\circ} := \{f\in\mathcal W_p : f>0\}$ the model $(\mathcal W^\circ,\Lambda)$ is non-degenerate. Hence, the following theorem which is proved in Appendix~\ref{app:LB}, in particular, provides a lower bound on $\inf_{Q\in\mathcal Q_\alpha} \inf_{\hat{\Lambda}_n} \sup_{f\in\mathcal W_p} \E_{QP_f}[|\hat{\Lambda}_n - \Lambda(f)|]$, since $\mathcal W^\circ\subseteq\mathcal W_p$.

\begin{theorem} \label{thm:LBatomic}
Fix $n,p\in\N$, $\alpha>0$ and a differentiable functional $\Lambda:\mathcal W\to\R$ that is atomic of index $s$, $0\le s\le p$. If $(\mathcal W, \Lambda)$ is non-degenerate as in Definition~\ref{def:ND}, then there exist constants $c,C>0$ depending only on $\Lambda$ and $\mathcal W$, such that the minimax-risk of locally private estimation is lower bounded by
\begin{align*}
\inf_{Q\in\mathcal Q_\alpha} \inf_{\hat{\Lambda}_n} \sup_{f\in\mathcal W} \E_{QP_f}\left[|\hat{\Lambda}_n - \Lambda(f)|\right] \ge c \left(n(e^\alpha-1)^2\right)^{-\frac{p-s}{2p+2}},
\end{align*}
provided that $n(e^\alpha-1)^2\ge C$. Here, the second infimum is over all estimators $\hat{\Lambda}_n$ that take the sanitized data $Z_1,\dots, Z_n \stackrel{iid}{\thicksim} QP_f$ as input.
\end{theorem}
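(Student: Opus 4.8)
The plan is to prove the bound by a two-point (Le~Cam) argument, the private ingredient being the strong data processing inequality for $\alpha$-locally private channels (see, e.g., \citet{Duchi17}): for every $Q\in\mathcal Q_\alpha$ and all probability measures $P_0,P_1$ on $[0,1]$,
\[
D_{\mathrm{KL}}(QP_0\,\|\,QP_1)\ \lesssim\ (e^\alpha-1)^2\,\dtv(P_0,P_1)^2 .
\]
Since $Z_1,\dots,Z_n$ are i.i.d.\ under $QP_f$, the Kullback--Leibler divergence is additive over the $n$ coordinates, so $D_{\mathrm{KL}}(QP_{f_0}^n\,\|\,QP_{f_1}^n)\lesssim n(e^\alpha-1)^2\dtv(P_{f_0},P_{f_1})^2$. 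Hence, by Pinsker's inequality and the standard two-point inequality, it is enough to exhibit $f_0,f_1\in\mathcal W$ with $n(e^\alpha-1)^2\dtv(P_{f_0},P_{f_1})^2$ smaller than a suitable constant (so that $\dtv(QP_{f_0}^n,QP_{f_1}^n)\le\frac12$ uniformly over $Q\in\mathcal Q_\alpha$) while $|\Lambda(f_0)-\Lambda(f_1)|\gtrsim\big(n(e^\alpha-1)^2\big)^{-\frac{p-s}{2p+2}}$; the claimed bound, with $c,C$ depending only on $\Lambda$ and $\mathcal W$, then follows.

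To construct the two points I would combine non-degeneracy with the atomic structure. First pick $f_0\in\mathcal W$ realizing the index (i.e.\ $s_{f_0}=s$; possible since $s_f$ is integer-valued and bounded), so that the measure $\mu_{s,f_0}$ in \eqref{eq: Hahn-Banach} carries a nonzero atom of mass $a_0\neq 0$ at some point $x_0\in[0,1]$. Applying Definition~\ref{def:ND} at this $x_0$ yields a compactly supported $\psi\in C^p(\R)$ with $\psi^{(s)}(0)\neq 0$, a universal $\delta_0>0$, and $\eta_0=\eta_0(f_0)>0$, such that
\[
f_1(x):=f_0(x)+\eta\,\delta^{p}\,\psi\big((x-x_0)/\delta\big)\ \in\ \mathcal W
\qquad \text{for all } \eta\in(0,\eta_0),\ \delta\in(0,\delta_0).
\]
Write $h:=f_1-f_0$, so that $h^{(j)}(x)=\eta\,\delta^{p-j}\psi^{(j)}((x-x_0)/\delta)$ is supported in $(x_0-\delta,x_0+\delta)$; hence $\norm{h^{(j)}}_{L^\infty}\lesssim\eta\,\delta^{p-j}$ and $\dtv(P_{f_0},P_{f_1})=\tfrac12\int_0^1|h|=\tfrac12\,\norm{\psi}_{L^1}\,\eta\,\delta^{p+1}\asymp\eta\,\delta^{p+1}$, which gives $D_{\mathrm{KL}}(QP_{f_0}^n\,\|\,QP_{f_1}^n)\lesssim n(e^\alpha-1)^2\eta^2\delta^{2p+2}$ for all $Q\in\mathcal Q_\alpha$. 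For the gap of functional values, \eqref{eq: form functional} gives $\Lambda(f_1)-\Lambda(f_0)=T_{f_0}(h)+O(\norm{h}_{(2,m,\lambda)}^2)$ (legitimately, since $\norm{h}_{(\infty,m)}\lesssim\eta\,\delta^{p-m}$ is small), where, by Assumption~\ref{assum: no singular continuous part}, $\norm{h}_{(2,m,\lambda)}^2\lesssim\eta^2\delta^{2(p-m)}$; and in $T_{f_0}(h)=\sum_{j\le s}\int h^{(j)}\,d\mu_{j,f_0}$ the atom of $\mu_{s,f_0}$ at $x_0$ contributes $a_0\,\psi^{(s)}(0)\,\eta\,\delta^{p-s}$, whereas every remaining contribution --- the absolutely continuous part of $\mu_{s,f_0}$, the mass of $\mu_{s,f_0}$ on the punctured interval $(x_0-\delta,x_0)\cup(x_0,x_0+\delta)$, and all terms with $j<s$ --- is $o(\eta\,\delta^{p-s})$ as $\delta\to 0$. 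Consequently $|\Lambda(f_1)-\Lambda(f_0)|\gtrsim\eta\,\delta^{p-s}$ once $\delta$ is small enough.

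It then remains to balance the two constraints. Fix $\eta:=\eta_0$ and set $\delta:=c_1\big(n(e^\alpha-1)^2\big)^{-1/(2p+2)}$ for a small enough constant $c_1>0$; then $D_{\mathrm{KL}}(QP_{f_0}^n\,\|\,QP_{f_1}^n)\le\frac12$, so that $\dtv(QP_{f_0}^n,QP_{f_1}^n)\le\frac12$ by Pinsker, and $\delta<\delta_0$ as soon as $n(e^\alpha-1)^2\ge C:=(c_1/\delta_0)^{2p+2}$. The two-point inequality then gives
\[
\inf_{Q\in\mathcal Q_\alpha}\ \inf_{\hat\Lambda_n}\ \sup_{f\in\mathcal W}\E_{QP_f}\big[|\hat\Lambda_n-\Lambda(f)|\big]\ \gtrsim\ |\Lambda(f_0)-\Lambda(f_1)|\ \asymp\ \eta_0\,\delta^{p-s}\ \asymp\ \big(n(e^\alpha-1)^2\big)^{-\frac{p-s}{2p+2}}.
\]

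The step I expect to cost the most work is the lower estimate $|T_{f_0}(h)|\gtrsim\eta\,\delta^{p-s}$: one must choose $f_0\in\mathcal W$ so that $\mu_{s,f_0}$ genuinely carries an atom of the top index $s$, align $x_0$ with that atom, and then argue --- using finiteness of the $|\mu_{j,f_0}|$ (whence $|\mu_{s,f_0}|((x_0-\delta,x_0)\cup(x_0,x_0+\delta))\to0$) together with Assumption~\ref{assum: no singular continuous part} --- that neither neighbouring atoms, nor the absolutely continuous parts, nor the lower-order terms $j<s$, nor the quadratic remainder $O(\eta^2\delta^{2(p-m)})$ swamp the atomic contribution; the last point needs the order $m$ and the index $s$ of $\Lambda$ to be compatible (e.g.\ $m\le s$), as holds for the functionals treated in Section~\ref{sec:Examples}. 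The remaining ingredients --- the strong data processing inequality, Pinsker's inequality, additivity of the Kullback--Leibler divergence over i.i.d.\ coordinates, and the two-point reduction --- are by now standard.
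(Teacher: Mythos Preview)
Your approach is essentially the same as the paper's: a two-point construction with the perturbation $f_1=f_0+\eta\delta^p\psi((\cdot-x_0)/\delta)$ coming from non-degeneracy, combined with a private information inequality. The only difference is packaging: the paper invokes Corollary~3.1 of \citet{Rohde20}, which directly yields
\[
\inf_{Q\in\mathcal Q_\alpha} \inf_{\hat{\Lambda}_n} \sup_{f\in\mathcal W} \E_{QP_f}\!\left[|\hat{\Lambda}_n - \Lambda(f)|\right]\ \ge\ \tfrac18\,\omega_{TV}\!\left(\big[8n(e^\alpha-1)^2\big]^{-1/2}\right),
\]
and then lower bounds the TV-modulus $\omega_{TV}(\eps)$ by exhibiting $f_0,f_1$; you instead write out the underlying ingredients (strong data processing, Pinsker, Le~Cam) explicitly. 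Both routes lead to the same computation: $\dtv(P_{f_0},P_{f_1})\lesssim\eta\delta^{p+1}$, the atom of $\mu_{s,f_0}$ at $x_0$ contributes $\eta\delta^{p-s}\psi^{(s)}(0)\mu_s(\{x_0\})$, and all other contributions to $T_{f_0}(h)$ are $o(\eta\delta^{p-s})$ via dominated convergence --- exactly as in the paper's proof.

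Two small points. First, you should take $\eta\in(0,\eta_0)$ strictly, not $\eta=\eta_0$; and the threshold $C$ in $n(e^\alpha-1)^2\ge C$ must be large enough not only to ensure $\delta<\delta_0$ but also to make $\delta$ small enough for the dominated-convergence step (``once $\delta$ is small enough'') to bite. Second, your hedge about the quadratic remainder is well taken: the bound $\|h\|_{(2,m,\lambda)}^2\lesssim\eta^2\delta^{2(p-m)}$ is the correct one (the paper's displayed bound $\eta^2\delta^{2p}\|\psi\|_{(\infty,m)}^2$ is too optimistic when $\lambda_j$ carries mass near $x_0$ for $j\ge1$), and dominating it by $\eta\delta^{p-s}$ indeed requires $p>2m-s$. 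Your sufficient condition $m\le s$ works (since then $2m-s\le m<p$); note that the matching upper-bound theorems (Theorems~\ref{th: rate atomic} and~\ref{th: atomic wav}) impose $p\ge\max(s+1,m+2,2m-s)$, so this compatibility is implicitly present in the paper's framework even if Theorem~\ref{thm:LBatomic} does not state it.
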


\section{Splines and spline wavelets}
\label{sec:Background}

In this section we collect some background on B-splines and spline wavelets that is necessary to understand the construction of our estimators and the proofs of our main results. The reader who is familiar with that theory may skip this section.

\subsection{Splines}
\label{sec:splines}

For a knot sequence \[\mathbf{\xi}\coloneqq {\left\{\xi_1\leq\dots\leq\xi_l\right\}},\quad l\in\mathbb{N},\]
in $\mathbb{R}$, we define recursively the $B$--splines $B_{k,d,\mathbf{\xi}}$ of degree $d\leq l-2$, $1\leq k\leq l-d-1$, by
\[B_{k,0,\mathbf{\xi}}=\mathds{1}_{[\xi_k, \xi_{k+1})},\]
and
\[B_{k,d,\mathbf{\xi}}(x)=\frac{x-\xi_k}{\xi_{k+d}-\xi_k}B_{k,d-1,\mathbf{\xi}}(x)+\frac{\xi_{k+d+1}-x}{\xi_{k+d+1}-\xi_{k+1}}B_{k+1,d-1,\mathbf{\xi}}(x),\]
or identically equal to $0$ if $\xi_{k+d+1}=\xi_k$ (convention: fractions with denominator $0$ have value $0$).
We also recall that this definition guarantees that (see Section 1, Theorem 3 of \cite{LycheSpline})
\begin{equation}\label{eq: derivatives} \frac{dB_{k,d,\mathbf{\xi}}}{dx}(x) = d \left[\frac{B_{k,d-1,\mathbf{\xi}}(x)}{\xi_{k+d}-\xi_k}-\frac{B_{k+1,d-1,\mathbf{\xi}}(x)}{\xi_{k+d+1}-\xi_{k+1}}\right],\quad d> 1.\end{equation}
From this, we deduce that $B_{k,d,\mathbf{\xi}}$ is $(d-1)$--times continuously differentiable. In addition, Theorem 5 in Section 1 of \cite{LycheSpline} implies
\begin{equation}\label{eq: integral b-splines}\int_{0}^1 B_{k,d,\xi}(x)dx = \int_{\xi_k}^{\xi_{k+d+1}} B_{k,d,\xi}(x)dx = \frac{\xi_{k+d+1}-\xi_k}{d+1},\end{equation} as the support of $B_{k,d,\xi}$ is $[\xi_k,\xi_{k+d+1})$. We then deduce that
\begin{equation}\label{eq: norm splines}\frac{\xi_{k+d+1}-\xi_k}{(d+1)^2}\leq\int_{\xi_k}^{\xi_{k+d+1}} B_{k,d,\xi}(x)^2dx\leq \frac{\xi_{k+d+1}-\xi_k}{d+1},\end{equation}
the upper bound coming from $0\leq B_{k,d,\xi}\leq 1$ ((1.7) and (1.21) of Section 1 in \cite{LycheSpline}) and the lower bound coming from Jensen's inequality. (1.21) in \cite{LycheSpline} states that the B-splines define a local partition of unity
\begin{equation}\label{eq: partition of unity}\sum_{k=1}^{l-d-1} B_{k,d,\xi}(x) = 1,\qquad x\in[0,1].\end{equation}

\noindent In the following, we note $\mathbf{B}_{k,d,\xi}\coloneqq B_{k,d,\xi}/ \norm{B_{k,d,\xi}}_{L^2}$. The space
\[\mathbb{S}_{d,\xi} \coloneqq \left\{s\colon [\xi_{1},\xi_{l})\to \mathbb{R},\ s=\sum_{k=1}^{l-d-1}c_k B_{k,d,\xi},\ c_k\in \mathbb{R} \right\}\]
is the space of splines, spanned by the B-splines. Assuming the knot sequence satisfies $\xi_{d+1}<\xi_{d+2}<\dots<\xi_{l-d-1}<\xi_{l-d}$, this space coincides with the space of piecewise polynomials of degree $d$ over the intervals $[\xi_{k},\xi_{k+1})$ with $d-1$ continuous derivatives. This space has dimension $l-d-1$.

\begin{theorem}\label{th: from L2 to l2}
    For any $d\geq 0$, there exist positive constants $K_d,\ \Tilde{K}_d$ depending only on $d$ such that for any vector $\mathbf{b}=(b_1,\dots,b_{l-d-1})$, we have
    \[K_d^{-1}\norm{\mathbf{b}}_{\ell^2}\leq \norm{\sum_{k=1}^{l-d-1} b_j \mathbf{B}_{k,d,\xi}}_{L^2}\leq K_d \norm{\mathbf{b}}_{\ell^2},\]
    and
\[\Tilde{K}_d^{-1}\norm{\mathbf{b}}_{\ell^\infty}\leq \norm{\sum_{k=1}^{l-d-1} b_k B_{k,d,\xi}}_{L^\infty}\leq \Tilde{K}_d \norm{\mathbf{b}}_{\ell^\infty}.\]
\end{theorem}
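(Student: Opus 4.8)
The plan is to establish the two sets of inequalities separately, exploiting the local structure of B-splines: on each interval $[\xi_m,\xi_{m+1})$ only $d+1$ of the $B_{k,d,\xi}$ are nonzero, and each $B_{k,d,\xi}$ is supported on $d+1$ consecutive intervals. First I would record the normalization $\|B_{k,d,\xi}\|_{L^2}^2 \asymp (\xi_{k+d+1}-\xi_k)/(d+1)$, which follows from the sandwich \eqref{eq: norm splines}, so that $\mathbf{B}_{k,d,\xi} = B_{k,d,\xi}/\|B_{k,d,\xi}\|_{L^2}$ has $L^2$-norm exactly one and uniformly comparable $L^\infty$-norm on its support.

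For the $L^\infty$ equivalence: the upper bound $\|\sum_k b_k B_{k,d,\xi}\|_{L^\infty}\le \tilde K_d \|\mathbf b\|_{\ell^\infty}$ is immediate from the triangle inequality, $0\le B_{k,d,\xi}\le 1$, and the fact that at any point $x$ at most $d+1$ summands are nonzero — so one may take $\tilde K_d = d+1$. The lower bound $\|\mathbf b\|_{\ell^\infty}\le \tilde K_d\|\sum_k b_k B_{k,d,\xi}\|_{L^\infty}$ is the substantive direction: fix the index $k^*$ achieving $|b_{k^*}|=\|\mathbf b\|_{\ell^\infty}$ and produce a point (or small region) where the spline is forced to be large. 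The standard tool here is the existence of a dual basis $\{\lambda_k\}$ of functionals, biorthogonal to $\{B_{k,d,\xi}\}$, each of the form $\lambda_k(s) = \int s\,\sigma_k$ with $\sigma_k$ supported in $\supp B_{k,d,\xi}$ and $\|\sigma_k\|_{L^1}\le C_d/(\xi_{k+d+1}-\xi_k)$ — this is de Boor's stable local basis, and it gives $|b_{k^*}| = |\lambda_{k^*}(s)| \le \|\sigma_{k^*}\|_{L^1}\|s\|_{L^\infty}\cdot(\xi_{k^*+d+1}-\xi_{k^*}) \lesssim C_d\|s\|_{L^\infty}$. Alternatively, and more self-contained, one restricts to the interval $[\xi_m,\xi_{m+1})$ intersecting $\supp B_{k^*,d,\xi}$ nontrivially: there $s$ is a polynomial of degree $d$ with coefficient vector (in the local B-spline basis of at most $d+1$ functions) bounded below in $\ell^\infty$ by $|b_{k^*}|$ times a constant, and since all norms on the finite-dimensional space of polynomials of degree $\le d$ on a reference interval are equivalent, $\|s\|_{L^\infty([\xi_m,\xi_{m+1}))} \ge c_d |b_{k^*}|$ after rescaling the interval to $[0,1]$ — scale invariance of the B-spline recursion under affine reparametrization is what makes the constant depend only on $d$.

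For the $L^2$ equivalence I would run the same two-sided argument quantitatively. Upper bound: expand $\|\sum_k b_k \mathbf B_{k,d,\xi}\|_{L^2}^2 = \sum_{k,k'} b_k b_{k'} \langle \mathbf B_{k},\mathbf B_{k'}\rangle$; the Gram matrix is banded (width $2d+1$) with entries bounded by $1$ in absolute value by Cauchy–Schwarz, so its operator norm on $\ell^2$ is at most $2d+1$ (Gershgorin / Schur test), giving $K_d^2 = 2d+1$. Lower bound: this is the main obstacle, requiring a uniform (knot-independent) lower bound on the smallest eigenvalue of the Gram matrix. I would obtain it by a localization-and-rescaling argument: write $\|s\|_{L^2}^2 = \sum_m \int_{\xi_m}^{\xi_{m+1}} |s|^2$, on each such interval $s$ is a degree-$d$ polynomial determined by $\le d+1$ local B-spline coefficients, invoke the finite-dimensional norm equivalence on the reference interval $[0,1]$ to get $\int_{\xi_m}^{\xi_{m+1}}|s|^2 \ge c_d (\xi_{m+1}-\xi_m)\sum_{k:\,[\xi_m,\xi_{m+1})\subseteq\supp B_{k}} |b_k|^2 \|\mathbf B_k\|_{L^\infty}^{-2}\cdot(\dots)$, and sum over $m$, checking that each coefficient $b_k$ is counted a bounded number of times and that the weights $(\xi_{m+1}-\xi_m)/\|B_k\|_{L^2}^2$ telescope correctly using \eqref{eq: norm splines}. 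Care is needed with repeated knots (zero-length intervals contribute nothing and the affine rescaling degenerates), handled by simply discarding empty intervals and noting every B-spline has at least one nontrivial interval in its support by the assumed knot separation in the definition of $\mathbb S_{d,\xi}$; and with the boundary B-splines near $\xi_1$ and $\xi_l$, which are covered by the same local argument since the reference-interval constant is uniform. I expect the bookkeeping in this lower bound — tracking how the per-interval estimates assemble into the global $\ell^2$ norm with constants depending only on $d$ — to be the part demanding the most care, though conceptually it is just the Riesz-basis property of the normalized B-splines, a classical fact (de Boor; see also \citet{jia2006stable}) that I would ultimately cite if a from-scratch proof proves too long.
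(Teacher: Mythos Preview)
The paper's proof is a one-line citation: it simply invokes Theorem~11 in Section~1 of \cite{LycheSpline} (the $L^q$ stability of B-splines) with $q=2$ and $q=\infty$, together with \eqref{eq: integral b-splines}--\eqref{eq: norm splines} to pass between $B_{k,d,\xi}$ and the normalized $\mathbf B_{k,d,\xi}$. Your proposal instead sketches a self-contained proof, which is a genuinely different (and more informative) route. The upper bounds and the $L^\infty$ lower bound via de~Boor's local dual functionals are correct and standard.

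There is, however, a real gap in your ``alternative'' localization-and-rescaling argument for the lower bounds. Restricting to $[\xi_m,\xi_{m+1})$ and affinely rescaling to $[0,1]$ does \emph{not} reduce the $d{+}1$ local B-spline restrictions to a fixed reference basis: those restrictions are polynomials whose coefficients depend on the ratios of the surrounding knot gaps $\xi_{m+1}-\xi_m,\ldots,\xi_{m+d+1}-\xi_{m-d}$, and these ratios are not normalized by rescaling a single interval. Consequently the finite-dimensional norm-equivalence constant you invoke is a priori a function of the knot configuration and can degenerate as knots coalesce; ``scale invariance of the B-spline recursion'' only applies to a global affine reparametrization of the entire knot sequence, not to a per-interval rescaling. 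The uniform, knot-independent lower bound is precisely the nontrivial content of de~Boor's result, and the correct mechanism is the one you already gave for $L^\infty$: the explicit local dual functionals $\lambda_k$ with $\|\sigma_k\|_{L^1}\le C_d/(\xi_{k+d+1}-\xi_k)$, which handle the $L^2$ case as well via $\|\sigma_k\|_{L^2}\le C_d/(\xi_{k+d+1}-\xi_k)^{1/2}$. You anticipated citing the classical fact if the bookkeeping became heavy; that is exactly what the paper does.
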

\begin{proof}
    It follows from Theorem 11 of Section 1 in \cite{LycheSpline} with $q=2$, noting the integral and the squared norm of $B_{k,d,\xi}$ from \eqref{eq: integral b-splines} and \eqref{eq: norm splines} are equal up to a constant depending on $d$ only, and $q=\infty$.
\end{proof}

Given the knot sequence $\{\xi_1\leq\dots\leq\xi_{l}\}$ such that $\xi_j<\xi_{j+d+1}$ for any $j$ and whose end knots have multiplicity $d+1$ (i.e., $0=\xi_1=\xi_{d+1}<\xi_{d+2}$ and $\xi_{l-d-1}<\xi_{l-d}=\xi_{l}=1$), an approximation of the density $f\in\mathcal{W}_p$ (or any function in $\mathcal{B}_{(\infty,p)}(M)$) is defined as
\[
\mathcal{S}_{d,\mathbf{\xi}}f(x)\coloneqq \sum_{k=1}^{l-d-1} \mathcal{L}_{k,d,\mathbf{\xi}}(f)B_{k,d,\mathbf{\xi}}(x) = \sum_{k=1}^{l-d-1} \mathcal{L}_{k,d,\mathbf{\xi}}(f)\ \norm{B_{k,d,\mathbf{\xi}}}_{L^2}\ \mathbf{B}_{k,d,\mathbf{\xi}}(x)\in\mathbb{S}_{d,\xi},
\]
see (1.122) in \cite{LycheSpline}. We define $h_{k,d,\xi}=\xi_{m_{k,d}+1}-\xi_{m_{k,d}}$ with $m_{k,d}$ so that this gap is maximal among indices $k,\dots,k+d$ and 
\[
\mathcal{L}_{k,d,\mathbf{\xi}}(f)\coloneqq \frac{1}{h_{k,d,\mathbf{\xi}}}\int_{\xi_{m_{k,d}}}^{\xi_{m_{k,d}+1}} \left(\sum_{i=0}^da_{k,i}\Big(\frac{x-\xi_{m_{k,d}}}{h_{k,d,\mathbf{\xi}}}\Big)^i\right)f(x)\ dx,
\]
for some coefficients $a_{k,i}$ satisfying the relation 
\[
H_{d+1}\mathbf{a}_k = \mathbf{c}_k,
\]
where $\mathbf{a}_k=(a_{k,0},\dots,a_{k,d})^T$, $H_{d+1}$ is a $(d+1)\times(d+1)$ matrix with elements $(H_{d+1})_{\alpha,\beta}=\frac{1}{\alpha+\beta-1}$ (see (1.124) and the proof of Lemma~2 in Section 1 of \cite{LycheSpline}) and $\mathbf{c}_k$ is a vector with its $i$--th entry bounded by $(d+1)^{i-1}$ (see the proof of Lemma~3 in Section~1 of \cite{LycheSpline}). Therefore, the entries of $\mathbf{a}_k$ are bounded by a quantity that depends on $d$ only. We note that $h_{k,d,\xi}$ above is constant and equal to $2^{-j}$ when $\xi=\xi^{(j)}$.

From Proposition~7 in Section~1 of \cite{LycheSpline}, if $\xi_1=0$, $\xi_{l-d}=1$, and $f$ is such that $\norm{f}_{(\infty,r+1)}<\infty$, with $0\leq r \leq d$, and $0\leq q\leq r$,
\begin{equation}
    \norm{\left(f-\mathcal{S}_{d,\mathbf{\xi}}f\right)^{(q)}}_{L^\infty} \leq K_{d,r,q,\xi}\ (\max_k\ h_{k,d,\mathbf{\xi}})^{r+1-q}\ \norm{f^{(r+1)}}_{L^\infty},
\end{equation} 
where, for $C$ a constant depending on $d$ only,
\[ K_{d,r,q,\xi} = (2d+1)^{r+2-q}\left[1+C\max_{d+1\leq m\leq 2^j+d} \prod_{k=d-q+1}^d \frac{\xi_{m+d+1}-\xi_{m-d}}{\min_{m-k+1 \leq i\leq m}\ \xi_{i+k}-\xi_i}\right].\]
In the following, we will consider equispaced knot sequences of the form $\xi=\xi^{(j)}$ 
of step $2^{-j}$, that is, 
\begin{equation}\label{eq:knot seq multires}
0=\xi^{(j)}_1=\dots=\xi^{(j)}_{d+1}<\dots<\xi^{(j)}_k\coloneqq(k-d-1)/2^{j}<\dots<\xi^{(j)}_{2^j+d+1}=\dots=\xi^{(j)}_{2^j+2d+1}=1.
\end{equation}
This entails that for any possible $k$, $h_{k,d,\xi^{(j)}}= 2^{-j}$. Also, for any $d+1\leq m\leq 2^j+d$, this implies
\[\prod_{k=d-q+1}^d \frac{\xi^{(j)}_{m+d+1}-\xi^{(j)}_{m-d}}{\min_{m-k+1 \leq i\leq m}\ \xi^{(j)}_{i+k}-\xi^{(j)}_i} \leq \prod_{k=d-q+1}^d \frac{(2d+1)2^{-j}}{2^{-j}}\leq (2d+1)^q .\] 
Therefore, $K_{d,r,q,\xi^{(j)}}$ is upper bounded by a quantity that depends on $d$ only and we can rewrite the bound as
\begin{equation}\label{eq:bias}
    \norm{\left(f-\mathcal{S}_{d,\mathbf{\xi}^{(j)}}f\right)^{(q)}}_{L^\infty} \leq K_d 2^{-j(r+1-q)} \norm{f^{(r+1)}}_{L^\infty},
\end{equation} 
where $K_d$ is a constant depending  on $d$ only. We write 
\begin{equation}\label{eq: dual basis splines}
e_{k,j}(x)=2^j\norm{B_{k,d,\mathbf{\xi}^{(j)}}}_{L^2}\, \mathds{1}_{\left[\xi_{m_{k,d}}^{(j)}, \xi_{m_{k,d}+1}^{(j)}\right]}(x)\, \sum_{i=1}^d a_{k,i}\left(\frac{x-\xi_{m_{k,d}}^{(j)}}{2^{-j}}\right)^i.
\end{equation} 
If $X$ follows a distribution with density $f$, \[\mathbb{E}e_{k,j}(X)= \mathcal{L}_{k,d,\mathbf{\xi}^{(j)}}(f)\norm{B_{k,d,\mathbf{\xi}^{(j)}}}_{L^2}.\] Given the upper bound on the $a_{k,i}$ and the fact that $[(x-\xi^{(j)}_{m_{k,d}})/h_{k,d,\xi^{(j)}}]^i$ is bounded by one on $[\xi_{m_{k,d}}^{(j)}, \xi_{m_{k,d}+1}^{(j)}]$, then there exists a quantity $C_d$ depending on $d$ only such that $|e_{k,j}(x)|\leq C_d2^j \norm{B_{k,d,\mathbf{\xi}^{(j)}}}_{L^2} \leq C_d2^{j/2}$ for any $x\in[0,1]$, recalling \eqref{eq: norm splines}.

\subsection{Spline wavelets}
\label{sec: splines wav}

\subsubsection{Frames and Riesz bases}

For $j\geq0$, let's consider the space 
\begin{equation}\label{eq: equi spline space}\mathcal{V}_j\coloneqq\mathbb{S}_{d,j}\coloneqq\mathbb{S}_{d,\xi^{(j)}}\end{equation}
for $\xi^{(j)}$ the equispaced knot sequence of step $2^{-j}$ as in \eqref{eq:knot seq multires}.
It is possible to verify that $\mathcal{V}_{0}\subset \mathcal{V}_{1}\subset\dots$ and $\cup_{j\geq 0}\mathcal{V}_j$ is dense in the $L^2$--space. Denote by $\mathcal{U}_j$ the orthogonal complement of $\mathcal{V}_j$ in $\mathcal{V}_{j+1}$, which is of dimension $2^j$ since $\text{dim}(\mathcal{V}_j)=2^j+d$. Therefore, $L^2=\mathcal{V}_0 \bigoplus \mathcal{U}_0 \bigoplus \mathcal{U}_1 \bigoplus \dots=\mathcal{V}_j \bigoplus \mathcal{U}_j \bigoplus \mathcal{U}_{j+1} \bigoplus \dots$ for any $j\geq 0$.

We now recall some definitions and properties from Riesz bases and frames which can be found in \cite{FramesRieszbases}.

\begin{definition}\label{def: riesz}
A collection of vectors $\left\{v_k\right\}$ in a Hilbert space $\mathcal{H}$ is a \textit{Riesz basis} for $\mathcal{H}$ if it is the image of an orthonormal basis for $\mathcal{H}$ under an invertible linear transformation.

A sequence $\left\{v_k\right\}$ in a Hilbert space $\mathcal{H}$, with associated norm $\norm{\cdot}_{\mathcal{H}}$, is a \textit{frame} if there exist numbers $A, B >0$ such that for all $v\in \mathcal{H}$ we have
\begin{equation}\label{eq: frame bounds def}
A\norm{v}_{\mathcal{H}}^2 \leq \sum_k |\langle v,v_k \rangle|^2\leq B\norm{v}_{\mathcal{H}}^2.
\end{equation}
The numbers $A,B$ are called the \textit{frame bounds}. The frame is
\textit{exact} if it ceases to be a frame whenever any single element is deleted from the sequence.
\end{definition}
The following theorem describes the link between frames and Riesz bases.
\begin{theorem}\label{th: riesz is frame}
    A sequence $\left\{v_k\right\}$ in a Hilbert space $\mathcal{H}$ is an exact frame for $\mathcal{H}$ if and only if it is a Riesz basis for $\mathcal{H}$.
\end{theorem}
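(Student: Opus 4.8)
The plan is to prove the two implications separately, in both cases working with the \emph{frame operator} $Sv=\sum_k\langle v,v_k\rangle v_k$. Recall from \cite{FramesRieszbases} that whenever $\{v_k\}$ is a frame with bounds $A,B>0$, the operator $S$ is well defined, bounded, self-adjoint, positive and boundedly invertible, with $A\,I\le S\le B\,I$ in the sense of quadratic forms; consequently $S^{1/2}$ and $S^{-1/2}$ are bounded and boundedly invertible as well. I will repeatedly use the elementary fact that applying a bounded, boundedly invertible operator $T$ to a frame yields a frame, since $\sum_k|\langle v,Tv_k\rangle|^2=\sum_k|\langle T^*v,v_k\rangle|^2$ lies between $A\norm{T^*v}^2$ and $B\norm{T^*v}^2$, and $\norm{T^*v}$ is comparable to $\norm{v}$.

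For the implication ``Riesz basis $\Rightarrow$ exact frame'', I would write $v_k=Te_k$ with $\{e_k\}$ an orthonormal basis and $T$ bounded and boundedly invertible, and compute $\sum_k|\langle v,v_k\rangle|^2=\sum_k|\langle T^*v,e_k\rangle|^2=\norm{T^*v}^2$, which lies between $\norm{T^{-1}}^{-2}\norm{v}^2$ and $\norm{T}^2\norm{v}^2$; this is the frame inequality \eqref{eq: frame bounds def}. For exactness, note that the vector $w:=(T^*)^{-1}e_{k_0}$ satisfies $\langle w,v_k\rangle=\langle e_{k_0},e_k\rangle=\delta_{k,k_0}$, so after deleting $v_{k_0}$ the family $\{v_k\}_{k\neq k_0}$ satisfies $\sum_{k\neq k_0}|\langle w,v_k\rangle|^2=0$ while $w\neq0$; hence no lower frame bound can hold and the original frame is exact.

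For the implication ``exact frame $\Rightarrow$ Riesz basis'', I would pass to the rescaled family $u_k:=S^{-1/2}v_k$, whose frame operator is $S^{-1/2}SS^{-1/2}=I$, so $\{u_k\}$ is a Parseval frame: $\sum_k|\langle v,u_k\rangle|^2=\norm{v}^2$ for every $v$. Applying this to $v=u_{k_0}$ gives $\norm{u_{k_0}}^2=\norm{u_{k_0}}^4+\sum_{k\neq k_0}|\langle u_{k_0},u_k\rangle|^2$, so $\norm{u_{k_0}}\le1$, with equality forcing $\langle u_{k_0},u_k\rangle=0$ for all $k\neq k_0$. The crucial step is to deduce from exactness that $\norm{u_{k_0}}=1$ for every $k_0$: if instead $c:=\norm{u_{k_0}}^2<1$, then for every $v$ one has $\sum_{k\neq k_0}|\langle v,u_k\rangle|^2=\norm{v}^2-|\langle v,u_{k_0}\rangle|^2\ge(1-c)\norm{v}^2$, so $\{u_k\}_{k\neq k_0}$ is still a frame; applying the bounded, boundedly invertible operator $S^{1/2}$ then shows $\{v_k\}_{k\neq k_0}=\{S^{1/2}u_k\}_{k\neq k_0}$ is a frame, contradicting exactness of $\{v_k\}$. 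Hence each $u_k$ has unit norm, so $\{u_k\}$ is an orthonormal system; being a frame it is complete, hence an orthonormal basis; and $v_k=S^{1/2}u_k$ with $S^{1/2}$ bounded and boundedly invertible exhibits $\{v_k\}$ as a Riesz basis, by Definition~\ref{def: riesz}.

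I expect the only genuinely delicate point to be this last step — showing that exactness forces the canonical tight rescaling $\{S^{-1/2}v_k\}$ to consist of unit vectors — since everything else is bookkeeping with $S$ and $S^{1/2}$. Some care is needed to verify that all operators in play are bounded with bounded inverses (which is guaranteed by $A\,I\le S\le B\,I$), so that the frame property transfers cleanly along $T$, $S^{1/2}$ and $S^{-1/2}$.
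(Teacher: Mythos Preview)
Your proof is correct and follows the standard textbook argument. Note, however, that the paper does not actually prove this theorem: it is stated in Section~\ref{sec: splines wav} as background material quoted from \cite{FramesRieszbases}, with no proof given. So there is no ``paper's own proof'' to compare against; your argument is precisely the kind of proof one finds in that reference, passing to the canonical Parseval rescaling $S^{-1/2}v_k$ and using exactness to force orthonormality.
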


Theorem \ref{th: from L2 to l2} and Theorem 3.6.6 (ii) of \cite{FramesRieszbases} imply that the normalized B-splines $\mathbf{B}_{k,d,\xi^{(j)}}$, $1\leq k\leq 2^j+d$, in $V_{j}$ form a frame with frame bounds $1$ and $K_d^2$. The following theorem makes it clear why frames are useful in situations where orthonormal bases are more difficult to explicit, as frames have similar properties. For any two bounded self-adjoint operators $U,V$ from a Hilbert space $\mathcal{H}$ to itself, we write $U\preceq V$ if $\langle Uh,h\rangle\leq \langle Vh,h\rangle$ for every $h\in\mathcal{H}$.

\begin{theorem}\label{th: frame operator}
    Given a sequence $\left\{v_k\right\}$ in a Hilbert space $\mathcal{H}$, the following two statements are equivalent:
    \begin{itemize}
        \item $\left\{v_k\right\}$ is a frame with bounds $A,B$.
        \item $Sv = \sum_k  \langle v,v_k \rangle v_k$ defines a bounded linear operator with $AI \preceq S \preceq BI$, for $I$ the identity operator, called the frame operator for $\left\{v_k\right\}$.
    \end{itemize}
\end{theorem}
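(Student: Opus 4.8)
The plan is to realise $S$ as $C^{*}C$, where $C\colon\mathcal H\to\ell^{2}$ is the \emph{analysis operator} $Cv=\{\langle v,v_{k}\rangle\}_{k}$, and then to read off both implications from the single identity $\langle Sv,v\rangle=\langle Cv,Cv\rangle_{\ell^{2}}=\sum_{k}|\langle v,v_{k}\rangle|^{2}$ combined with the definition of $\preceq$. This turns the theorem into bookkeeping about adjoints, once one knows that the analysis operator and its adjoint are bounded.

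\emph{Frame $\Rightarrow$ operator.} Suppose $\{v_{k}\}$ is a frame with bounds $A,B$. The upper inequality in \eqref{eq: frame bounds def} says precisely that $C$ maps $\mathcal H$ into $\ell^{2}$ with $\|Cv\|_{\ell^{2}}^{2}\le B\|v\|_{\mathcal H}^{2}$, so $C$ is bounded and its adjoint $C^{*}\colon\ell^{2}\to\mathcal H$ exists with $\|C^{*}\|=\|C\|\le\sqrt B$. The one point that genuinely needs an argument is that $C^{*}\{c_{k}\}=\sum_{k}c_{k}v_{k}$, the series converging in $\mathcal H$: for $m\le n$ and $\|v\|_{\mathcal H}\le1$ one has $\bigl|\sum_{k=m}^{n}c_{k}\langle v_{k},v\rangle\bigr|\le\bigl(\sum_{k=m}^{n}|c_{k}|^{2}\bigr)^{1/2}\bigl(\sum_{k=m}^{n}|\langle v,v_{k}\rangle|^{2}\bigr)^{1/2}\le\sqrt B\,\bigl(\sum_{k=m}^{n}|c_{k}|^{2}\bigr)^{1/2}$, hence $\bigl\|\sum_{k=m}^{n}c_{k}v_{k}\bigr\|_{\mathcal H}\le\sqrt B\,\bigl(\sum_{k=m}^{n}|c_{k}|^{2}\bigr)^{1/2}$; since $\{c_{k}\}\in\ell^{2}$ the partial sums are Cauchy, and the identity $\langle\sum_{k=1}^{N}c_{k}v_{k},v\rangle=\sum_{k=1}^{N}c_{k}\langle v_{k},v\rangle$ passes to the limit to give $\langle\sum_{k}c_{k}v_{k},v\rangle=\langle\{c_{k}\},Cv\rangle_{\ell^{2}}=\langle C^{*}\{c_{k}\},v\rangle$ for all $v$, identifying the limit as $C^{*}\{c_{k}\}$. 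Applying this with $c_{k}=\langle v,v_{k}\rangle\in\ell^{2}$ shows $Sv=\sum_{k}\langle v,v_{k}\rangle v_{k}=C^{*}Cv$ converges in $\mathcal H$; thus $S$ is a bounded linear operator with $\|S\|\le B$, self-adjoint since $(C^{*}C)^{*}=C^{*}C$. Finally $\langle Sv,v\rangle=\langle Cv,Cv\rangle_{\ell^{2}}=\sum_{k}|\langle v,v_{k}\rangle|^{2}$, so \eqref{eq: frame bounds def} reads $A\langle v,v\rangle\le\langle Sv,v\rangle\le B\langle v,v\rangle$ for all $v\in\mathcal H$, i.e.\ $AI\preceq S\preceq BI$.

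\emph{Operator $\Rightarrow$ frame.} Suppose $Sv=\sum_{k}\langle v,v_{k}\rangle v_{k}$ defines a bounded linear operator with $AI\preceq S\preceq BI$ (in particular $A,B>0$). For fixed $v\in\mathcal H$ the defining series converges in $\mathcal H$, so by continuity of the inner product $\sum_{k=1}^{N}|\langle v,v_{k}\rangle|^{2}=\langle\sum_{k=1}^{N}\langle v,v_{k}\rangle v_{k},v\rangle\longrightarrow\langle Sv,v\rangle$ as $N\to\infty$; the left-hand side being nondecreasing in $N$, this forces $\sum_{k}|\langle v,v_{k}\rangle|^{2}=\langle Sv,v\rangle<\infty$. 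Inserting this into $A\langle v,v\rangle\le\langle Sv,v\rangle\le B\langle v,v\rangle$ gives $A\|v\|_{\mathcal H}^{2}\le\sum_{k}|\langle v,v_{k}\rangle|^{2}\le B\|v\|_{\mathcal H}^{2}$ for every $v$, which is exactly \eqref{eq: frame bounds def}. Hence $\{v_{k}\}$ is a frame with bounds $A,B$.

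The only nontrivial ingredient is the convergence estimate for $\sum_{k}c_{k}v_{k}$ (equivalently, the boundedness and explicit form of $C^{*}$) used in the first implication; everything else is a direct unwinding of adjoints and of the definition of $\preceq$. Alternatively, this is a standard fact from frame theory and may be quoted directly from \cite{FramesRieszbases}.
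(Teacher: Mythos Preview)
Your proof is correct. The paper itself does not prove this theorem but simply records it as a standard background result from \cite{FramesRieszbases}; your argument via the analysis operator $C$ and the identity $S=C^{*}C$ is precisely the textbook proof found there, and your final remark that one may quote the result directly is exactly what the paper does.
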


The frame operator is invertible, with $B^{-1}I \preceq S^{-1} \preceq A^{-1}I$ and every $u$ in the Hilbert space $\mathcal{H}$ can be written $u= \sum_k  \langle u,v_k \rangle S^{-1}v_k= \sum_k  \langle u,S^{-1}v_k \rangle v_k$ (Theorem 5.1.6 \cite{FramesRieszbases}). In addition, $\left\{S^{-1}v_k\right\}$ is a frame, called the \textit{dual frame}, with frame bounds $B^{-1}, A^{-1}$. Also, by Proposition 5.4.4 of \cite{FramesRieszbases}, we have the following bounds on the operator norms

\begin{align}
    \norm{S}&\leq B,\\
    \norm{S^{-1}}&\leq A^{-1}.\label{eq: bound operator norm frame inv}
\end{align}
From Lemma 5.1.5 in \cite{FramesRieszbases}, $S$ and $S^{-1}$ are both self-adjoint.

\subsubsection{Riesz bases of splines}\label{sec: Riesz bases of splines}

Let $j_0\geq1$ be the smallest integer satisfying $2^{j_0}\geq 2d+1$. For $j\geq j_0$, \cite{jia2006stable} constructs $2^j$ wavelets constituting a Riesz basis for the space $\mathcal{U}_j$ introduced above with the frame bounds being independent of $j$. There are $2^j-2d$ inner wavelets and $d$ boundary-corrected wavelets at each end of the interval. These take the form
\begin{equation}\label{eq: spline wavelets}
\psi_{j,i}(\cdot)=
\begin{cases}
    \psi_{j,i}^{\text{left}}(\cdot) =2^{j/2}\psi_i(2^j \cdot ), \quad &i=-d,\dots,-1\\
    \psi_{j,i}(\cdot) = 2^{j/2}\psi(2^j \cdot - i), \quad &i=0,\dots,2^j-2d-1, \\
    \psi_{j,i}^{\text{right}}(\cdot) =\psi_{j,2^j-2d-1-i}^{\text{left}}(1-\cdot), \quad & i=2^j-2d,\cdots,2^j-d-1,\\
\end{cases}
\end{equation}
for some wavelet functions $\psi, \psi_i$ supported inside $[0,2d+1]$. This implies that $\psi_{j,i}$ is itself supported on an interval of the form $[c_i/2^j,d_i/2^j]$, for $|d_i-c_i|\leq 2d+1$. By Theorem \ref{th: riesz is frame}, this Riesz basis of $\mathcal{U}_j$ is a frame and we note $S_j$ its frame operator, with frame bounds $A_d,B_d$ (depending on $d$ only, not $j$).

We recall that $L^2=\mathcal{V}_{j_0} \bigoplus \mathcal{U}_{j_0} \bigoplus \mathcal{U}_{j_0+1} \bigoplus \dots$ and $\mathcal{V}_{j_0}$ admits the normalized B-splines $\mathbf{B}_{k,d,\xi^{(j_0)}}$ as a Riesz basis (see Theorem \ref{th: riesz is frame} and the discussion below it). Up to taking the minimum and maximum of the frame bounds $A_d,B_d$ of the spline wavelets and those of the B-splines basis of $\mathcal{V}_{j_0}$, we consider that $A_d,B_d$ are common frame bounds for each level $j\geq j_0$ of this multiresolution analysis of the $L^2$--space. Also, we can directly check that it is possible to normalize the above wavelets by $\norm{\psi}_{L^2}$ and $\norm{\psi_i}_{L^2}$ respectively to ensure that the wavelets have $L^2$--norms equal to $1$, while keeping the aforementioned properties of the frame. Indeed, recalling \eqref{eq: frame bounds def} from Definition \ref{def: riesz}, we see that the frame bounds for these new normalized wavelets can be replaced with $A_d \max(\norm{\psi}_{L^2}, (\norm{\psi_i}_{L^2})_i)^{-1}$ and $B_d \min(\norm{\psi}_{L^2}, (\norm{\psi_i}_{L^2})_i)^{-1}$, independent of $j$ once again.

To facilitate the reading, we use below the sets of indices $\mathcal{M}_{j_0-1}=[\![ 1,2^{j_0}+d]\!]$ and $\mathcal{M}_{j}=[\![ -p,2^{j}-d-1]\!]$ for $j\geq j_0$. We note \begin{equation}\label{eq: notation spline wav}\psi_{j_0-1,k}\coloneqq \mathbf{B}_{k,d,\xi^{(j_0)}}\quad\text{ for $k\in \mathcal{M}_{j_0-1}$,}\end{equation}$S_{j_0-1}$ the frame operator of this basis and \begin{equation}\label{eq: notation dual spline}\Tilde{\psi}_{j,i}= S_j^{-1} \psi_{j,i}\end{equation} the elements of the dual of the above wavelet basis for $j\geq j_0-1$ and $i\in \mathcal{M}_{j}$. We note that \eqref{eq: norm splines} and \eqref{eq: partition of unity} ensure $\norm{\psi_{j_0-1,k}}_{L^\infty}\leq C_d 2^{-j_0}$ for any $k\in \mathcal{M}_{j_0-1}$.

As the orthogonal spaces $\mathcal{V}_{j_0},\mathcal{U}_{j_0},\dots,\mathcal{U}_{j_n}$ and their spline wavelet bases have frame bounds depending on $d$ only, the union of these bases is a Riesz basis/frame for $\mathcal{V}_{j_n+1}$ with frame bounds depending on $d$ only. Since $\mathcal{V}_{j_0}$ contains all the polynomials of degree $d$ on $[0,1]$, the above wavelets in $\mathcal{U}_j$ with $j\geq j_0$ are orthogonal to all of these such that, for any $f \in \mathcal{B}_{(\infty,l)}(M)$, $1\leq l\leq d+1$,
\begin{align}\label{eq: decay spline wavelet coeff}
    \left|\langle f, \psi_{j,i}\rangle\right| &= \left|\int_0^1 \left(f(x)-\sum_{r=0}^{l-1}\frac{f^{(r)}(0)}{r!}x^r\right) \psi_{j,i}(x) dx\right|\nonumber\\
    &= \left|\int_0^1 \frac{1}{l!}f^{(l)}(\tilde{x})x^{l} \psi_{j,i}(x) dx\right|
    \leq \frac{M}{l!} \int_{c_i/2^j}^{d_i/2^j} x^{l} \left|\psi_{j,i}(x)\right| dx\nonumber\\
    &\leq \frac{M}{l!}2^{-(l+1/2)j} \left(\max_{i=-d,\dots,-1} \norm{\psi_i}_{L^\infty} \vee \norm{\psi}_{L^\infty}\right) \int_0^{2d+1} u^l du  \leq C_d M 2^{-(l+1/2)j},
\end{align}
where we used that $\int_0^1 x^r \psi_{j,i}(x) dx=0$ for $r\leq d$, the Taylor expansion of $f$ around $0$ with exact remainder and some $\tilde{x}\in[0,x]$, and the change of variable $u=2^jx$. We can check that this bound is also valid for $l=0$.

%


\section{Spline plug-in estimator}
\label{sec:SplineEst}

In this section we present a simple private spline projection plug-in estimator of which we can show that it achieves the nonparametric lower bounds of Section~\ref{sec:LowerBounds} on atomic functionals. However, as we will see subsequently (cf. Remark~\ref{rk: failure spline smooth}), it is generally suboptimal for smooth functionals.

For a privacy budget $\alpha>0$, a spline degree $d$ and a resolution level $j$, we let the $i$-th data owner  generate sanitized data $Z_i$ by
\begin{equation}\label{eq: private data}
    Z_i^{(j)} = \begin{bmatrix}
           e_{1,j}(X_i) \\
           \vdots \\
           e_{2^j+d,j}(X_i)
         \end{bmatrix} + \sigma_{\alpha,j} \begin{bmatrix}
             Y_{1,i}\\
             \vdots\\
             Y_{2^j+d,i}
         \end{bmatrix},
\end{equation}
where the functions $e_{k,j}$ defined in \eqref{eq: dual basis splines} satisfy $\|e_{k,j}\|_{L^\infty} \leq C_d2^{j/2}$ for some large enough $C_d$ depending only on $d$ and for independent $Y_{k,i}$ distributed as Laplace random variables with parameter $1$ and $\sigma_{\alpha,j}>0$. Then, letting $j=j_n$ depend on the sample size $n$, we can define estimators of $f$ and $\Lambda(f)$ as
\begin{equation}\label{eq: estimator}
\hat{f}_n(x)= \sum_{k=1}^{2^{j_n}+d} \bar{Z}^{(j_n)}_k\mathbf{B}_{k,d,\xi^{(j_n)}}(x) \quad\text{ and }\quad \Lambda(\hat{f}_n),\end{equation}
where $\bar{Z}^{(j_n)}=\frac1n \sum_{i=1}^n Z_i^{(j_n)}$. 

The objective of the following result is to find an appropriate value of $\sigma_{\alpha,j}$ to ensure local differential privacy of the above mechanism.
\begin{proposition}\label{th: privacy}
    The privacy mechanism \eqref{eq: private data} with $\sigma_{\alpha,j}=C_d\alpha^{-1}2^{j/2}$, for $C_d$ a large enough constant depending on $d$ only, is $\alpha$-locally differentially private, that is, it satisfies \eqref{eq:LDP}.
\end{proposition}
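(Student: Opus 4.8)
The plan is to verify the differential privacy inequality \eqref{eq:LDP} directly using the explicit product structure of the Laplace mechanism \eqref{eq: private data}. Since the noise coordinates $Y_{k,i}$, $k=1,\dots,2^j+d$, are independent, the conditional density of $Z_i^{(j)}$ given $X_i=x$ factorizes, and the likelihood ratio between $Z_i^{(j)}\mid X_i=x$ and $Z_i^{(j)}\mid X_i=x'$ at a point $z=(z_1,\dots,z_{2^j+d})$ is
\[
\frac{dQ(\cdot\mid x)}{dQ(\cdot\mid x')}(z)
= \prod_{k=1}^{2^j+d} \exp\!\left(\frac{|z_k - \sigma_{\alpha,j}^{-1}\,e_{k,j}(x')| - |z_k - \sigma_{\alpha,j}^{-1}\, e_{k,j}(x)|}{1}\right),
\]
after the usual rescaling so each coordinate is a unit-scale Laplace shifted by $e_{k,j}(\cdot)/\sigma_{\alpha,j}$. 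By the reverse triangle inequality this is bounded by $\exp\!\big(\sigma_{\alpha,j}^{-1}\sum_{k} |e_{k,j}(x)-e_{k,j}(x')|\big)$.

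The key step is then to control $\sum_{k=1}^{2^j+d} |e_{k,j}(x)-e_{k,j}(x')|$ by something of order $2^{j/2}$ uniformly in $x,x'\in[0,1]$. Here I would use the local support structure of the functions $e_{k,j}$: by the definition \eqref{eq: dual basis splines}, $e_{k,j}$ is supported on the single knot interval $[\xi_{m_{k,d}}^{(j)},\xi_{m_{k,d}+1}^{(j)}]$, which has length $2^{-j}$, and these intervals have bounded overlap as $k$ varies (at most $d+1$ of the $e_{k,j}$ are nonzero at any fixed point, since $m_{k,d}\in\{k,\dots,k+d\}$). Combined with the pointwise bound $\|e_{k,j}\|_{L^\infty}\le C_d 2^{j/2}$ already established in Section~\ref{sec:splines}, the worst case is the crude bound $|e_{k,j}(x)-e_{k,j}(x')|\le |e_{k,j}(x)| + |e_{k,j}(x')|$, and summing over $k$ gives at most $2(d+1)C_d 2^{j/2}$ nonzero terms each bounded by $C_d 2^{j/2}$ — wait, that would give $2^{j}$. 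So instead I would be more careful: the quantity $\sum_k |e_{k,j}(x)| \le (d+1)\,C_d\,2^{j/2}$ because at most $d+1$ summands are nonzero, and likewise for $x'$; hence $\sum_k |e_{k,j}(x)-e_{k,j}(x')| \le 2(d+1)C_d 2^{j/2} =: \tilde C_d 2^{j/2}$. Plugging this in, the likelihood ratio is bounded by $\exp(\sigma_{\alpha,j}^{-1}\tilde C_d 2^{j/2})$, and choosing $\sigma_{\alpha,j} = C_d \alpha^{-1} 2^{j/2}$ with $C_d\ge \tilde C_d$ makes this $\le e^\alpha$, which is \eqref{eq:LDP}.

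The main obstacle is the bound on $\sum_{k} |e_{k,j}(x)-e_{k,j}(x')|$: a naive triangle-inequality-plus-sup bound over all $2^j+d$ coordinates would give an extra factor $2^{j/2}$ and force $\sigma_{\alpha,j}\asymp 2^j/\alpha$, which is too large for the rate results later. The resolution is the observation that, for each fixed $x$, only $O(d)$ of the functions $e_{k,j}$ are nonzero (because the active knot interval indices $m_{k,d}$ for which $x$ lies in $[\xi_{m_{k,d}}^{(j)},\xi_{m_{k,d}+1}^{(j)}]$ correspond to only boundedly many $k$), so the sum is effectively a $d$-term sum, not a $2^j$-term sum. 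One should double-check the boundary knots, where multiplicities are $d+1$ and the support structure is slightly different, but the same local-overlap argument applies there with possibly a different constant $C_d$. Beyond that the argument is routine measure-theoretic bookkeeping: confirming $Q(\cdot\mid x)\ll Q(\cdot\mid x')$ (immediate since shifted Laplace measures on $\R^{2^j+d}$ are mutually equivalent), and that it suffices by the remark following \eqref{eq:LDP} to bound the Radon–Nikodym derivative.
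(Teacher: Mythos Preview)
Your proposal is correct and follows essentially the same route as the paper's proof: factor the Laplace density, bound the log-likelihood ratio via the triangle inequality by $\sigma_{\alpha,j}^{-1}\sum_k(|e_{k,j}(x)|+|e_{k,j}(x')|)$, then use that at most $d+1$ of the $e_{k,j}$ are nonzero at any fixed point together with $\|e_{k,j}\|_{L^\infty}\le C_d 2^{j/2}$ to get the bound $2(d+1)C_d 2^{j/2}$ and read off $\sigma_{\alpha,j}=2(d+1)C_d\alpha^{-1}2^{j/2}$. Your momentary worry about an extra $2^{j/2}$ factor and the boundary-knot caveat are both harmless: the local-support count of $d+1$ already handles the boundary case, exactly as in the paper.
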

\begin{proof}
The Lebesgue density of the privacy mechanism is equal to
\begin{equation*}
(z_1,\dots, z_{2^{j}+d})\mapsto\prod_{k=1}^{2^{j}+d} \frac{1}{2\sigma_{\alpha,j}} \exp\left( -\frac{\left|e_{k,j}(x)-z_k\right|}{\sigma_{\alpha,j}}\right).
\end{equation*}
Then, reverse and ordinary triangle inequalities yield
\begin{align*}
\frac{dQ(\cdot | x)}{dQ(\cdot |x')}(z_1,\dots, z_{2^{j}+d})&\leq \prod_{k=1}^{2^j+d} \exp\left( \frac{\left|e_{k,j}(x')-z_k\right|-\left|e_{k,j}(x)-z_k\right|}{\sigma_{\alpha,j}}\right) \\
&\leq \exp\left( \sum_{k=1}^{2^j+d} \frac{\left|e_{k,j}(x')\right|+\left|e_{k,j}(x)\right|}{\sigma_{\alpha,j}}\right).
\end{align*}
By definition of $e_{k,j}$, for a fixed $x$, there are at most $d+1$ indices $k$ such that $e_{k,j}(x)$ is not identically null.  Since $|e_{k,j}(x)|\leq C_d 2^{j/2}$, it is sufficient to take $\sigma_{\alpha, j}=2(d+1)C_d\alpha^{-1}2^{j/2}$.
\end{proof}

We consider the plug-in estimator \eqref{eq: estimator} with $\sigma_{\alpha,j_n}$ as in Proposition~\ref{th: privacy} and $j_n$ suitably defined below. See Appendix~\ref{app:Main} for the proof of the following result.

\begin{theorem}\label{th: rate atomic}
    Let $f\in \mathcal{W}_{p}$, $p\in\mathbb{N}^*$, and suppose $\Lambda$ is an atomic functional of index $s$ satisfying \eqref{eq: form functional}. Under Assumption \ref{assum: no singular continuous part} and $p\geq \max(s+1, m+2, 2m-s)$, the plug-in estimator obtained from 
    \eqref{eq: estimator} with $2^{j_n}\leq(n\alpha^2)^{\frac{1}{2p+2}}\wedge n^{\frac{1}{2p+1}} < 2^{j_n+1}$ and $p+1\leq d$ converges at rate 
    \[\underset{f\in\mathcal{W}_p}{\sup}\mathbb{E}_f|\Lambda(f)-\Lambda(\hat{f}_n)|\leq C_{d,M}\, \left[(n\alpha^2)^{-\frac{(p-s)}{2p+2}}\vee n^{-\frac{(p-s)}{2p+1}}\right],\]
    where $C_{d,M}$ is a constant depending on $d$ and $M$ only.
\end{theorem}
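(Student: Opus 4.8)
The plan is to combine the functional linearization \eqref{eq: form functional} with a bias--variance analysis of the spline projection $\hat{f}_n$. First I would record the elementary facts: by Proposition~\ref{th: privacy} the mechanism \eqref{eq: private data} is $\alpha$-locally private with $\sigma_{\alpha,j_n}=C_d\alpha^{-1}2^{j_n/2}$; the Laplace noise is centred and $\mathbb{E}_f e_{k,j_n}(X)=\mathcal{L}_{k,d,\mathbf{\xi}^{(j_n)}}(f)\norm{B_{k,d,\mathbf{\xi}^{(j_n)}}}_{L^2}$, so $\mathbb{E}_f\hat{f}_n=\mathcal{S}_{d,\mathbf{\xi}^{(j_n)}}f=:\mathcal{S}_nf$; and $\hat{f}_n\in\mathbb{S}_{d,\mathbf{\xi}^{(j_n)}}\subseteq C^{d-1}[0,1]\subseteq C^p[0,1]$ since $p+1\le d$, so that $T_f(\hat{f}_n-f)$ makes sense. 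Because \eqref{eq: form functional} is only valid when $\norm{h}_{(\infty,m)}$ is small, I would split on the event $\mathcal{A}=\{\norm{\hat{f}_n-f}_{(\infty,m)}\le\eps_0\}$ for a fixed small $\eps_0$: on $\mathcal{A}^c$ one bounds $|\Lambda(\hat{f}_n)-\Lambda(f)|$ by a crude deterministic quantity (valid for spline functions with controlled coefficients) and shows $\mathbb{P}_f(\mathcal{A}^c)$ decays faster than any power of $n$, using $\norm{\hat{f}_n-\mathcal{S}_nf}_{(\infty,m)}\lesssim_d 2^{j_n(m+1/2)}\max_k|\bar{Z}^{(j_n)}_k-\mathbb{E}_f\bar{Z}^{(j_n)}_k|$, the fact that each coordinate is a normalized sum of a bounded variable and a Laplace variable (hence sub-exponential with variance proxy $\lesssim_{d,M}n^{-1}(1+\alpha^{-2}2^{j_n})$), a Bernstein bound, and a union bound over the $\asymp 2^{j_n}$ coordinates. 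It then remains to control $\mathbb{E}_f|T_f(\hat{f}_n-f)|$ and $\mathbb{E}_f\norm{\hat{f}_n-f}^2_{(2,m,\lambda)}$.

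For the linear term I would write $T_f(\hat{f}_n-f)=T_f(\hat{f}_n-\mathcal{S}_nf)+T_f(\mathcal{S}_nf-f)$. The deterministic piece is controlled via \eqref{eq: Hahn-Banach} for an atomic functional of index $s$ and the bias estimate \eqref{eq:bias} with $r+1=p$, $q=j\le s_f\le s\le p-1$: it is $\le\sum_{j=0}^{s}|\mu_{j,f}|([0,1])\norm{(\mathcal{S}_nf-f)^{(j)}}_{L^\infty}\lesssim_{d,M}2^{-j_n(p-s)}$, uniformly over $\mathcal{W}_p$ by \eqref{eq:bound total var}. For the stochastic piece, expanding $\hat{f}_n-\mathcal{S}_nf=\sum_k(\bar{Z}^{(j_n)}_k-\mathbb{E}_f\bar{Z}^{(j_n)}_k)\mathbf{B}_{k,d,\mathbf{\xi}^{(j_n)}}$ exhibits $T_f(\hat{f}_n-\mathcal{S}_nf)=\tfrac1n\sum_{i=1}^n\xi_i$ as an average of iid centred variables $\xi_i=\sum_k T_f(\mathbf{B}_{k,d,\mathbf{\xi}^{(j_n)}})[(e_{k,j_n}(X_i)-\mathbb{E}_f e_{k,j_n}(X_i))+\sigma_{\alpha,j_n}Y_{k,i}]$, so $\mathbb{E}_f|T_f(\hat{f}_n-\mathcal{S}_nf)|\le n^{-1/2}\Var(\xi_1)^{1/2}$. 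Using that at most $d+1$ of the $e_{k,j_n}$ are nonzero at any point, that $|e_{k,j_n}|\le C_d2^{j_n/2}$ on a support of length $\asymp 2^{-j_n}$, and the independence of the $Y_{k,i}$, I get $\Var(\xi_1)\lesssim_{d,M}(1+\sigma_{\alpha,j_n}^2)\sum_k T_f(\mathbf{B}_{k,d,\mathbf{\xi}^{(j_n)}})^2$. The crucial input is then the uniform bound
\begin{equation*}
\sup_{f\in\mathcal{W}_p}\ \sum_{k}T_f(\mathbf{B}_{k,d,\mathbf{\xi}^{(j_n)}})^2\ \lesssim_{d,M}\ 2^{j_n(2s+1)},
\end{equation*}
which I would prove by writing $T_f(\mathbf{B}_k)=\sum_{j\le s_f}\int\mathbf{B}_k^{(j)}d\mu_{j,f}$ and handling the absolutely continuous parts (where $\norm{\mathbf{B}_k^{(j)}}_{L^1}\lesssim_d 2^{j_n(j-1/2)}$ gives a level-$j$ contribution $\lesssim 2^{2j_nj}$) separately from the discrete parts (where $\norm{\mathbf{B}_k^{(j)}}_{L^\infty}\lesssim_d 2^{j_n(j+1/2)}$, the fact that each atom meets at most $d+1$ of the $\mathbf{B}_k$, and the bounded total atomic mass from Assumption~\ref{assum: no singular continuous part} give a level-$j$ contribution only $\lesssim 2^{j_n(2j+1)}$), and then maximising over $j\le s$. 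With $\sigma_{\alpha,j_n}^2\asymp\alpha^{-2}2^{j_n}$ this yields $\mathbb{E}_f|T_f(\hat{f}_n-f)|\lesssim_{d,M}2^{-j_n(p-s)}+2^{j_n(s+1/2)}n^{-1/2}+2^{j_n(s+1)}(n\alpha^2)^{-1/2}$.

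For the quadratic remainder I would use $\mathbb{E}_f\norm{\hat{f}_n-f}^2_{(2,m,\lambda)}\lesssim\sum_{j=0}^m\big(\mathbb{E}_f\norm{(\hat{f}_n-f)^{(j)}}_{L^2}^2+\sup_{x}\mathbb{E}_f|(\hat{f}_n-f)^{(j)}(x)|^2\big)$, the second summand absorbing the discrete part of $\lambda_j$ (of bounded mass by Assumption~\ref{assum: no singular continuous part}). For each $j\le m<p$, the spline-bias part is $\lesssim_{d,M}2^{-2j_n(p-j)}$ by \eqref{eq:bias}; the centred stochastic part $(\hat{f}_n-\mathcal{S}_nf)^{(j)}=\sum_k(\bar{Z}^{(j_n)}_k-\mathbb{E}_f\bar{Z}^{(j_n)}_k)\mathbf{B}_{k,d,\mathbf{\xi}^{(j_n)}}^{(j)}$ is, by iterating \eqref{eq: derivatives}, a spline of degree $d-j$ whose $\ell^2$ (respectively pointwise) coefficient vector is $\lesssim_d 2^{j_nj}$ times that of $(\bar{Z}^{(j_n)}_k-\mathbb{E}_f\bar{Z}^{(j_n)}_k)_k$, so that by Theorem~\ref{th: from L2 to l2} and $\Var(\bar{Z}^{(j_n)}_k)\lesssim_{d,M}n^{-1}(1+\alpha^{-2}2^{j_n})$ it contributes $\lesssim_{d,M}2^{j_n(2j+1)}n^{-1}+2^{j_n(2j+2)}(n\alpha^2)^{-1}$. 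Taking the maximum over $j\le m$, $\mathbb{E}_f\norm{\hat{f}_n-f}^2_{(2,m,\lambda)}\lesssim_{d,M}2^{-2j_n(p-m)}+2^{j_n(2m+1)}n^{-1}+2^{j_n(2m+2)}(n\alpha^2)^{-1}$. Finally I would insert $2^{j_n}\asymp(n\alpha^2)^{1/(2p+2)}\wedge n^{1/(2p+1)}$ and verify, separately in the regime where $2^{j_n}\asymp n^{1/(2p+1)}$ and in the regime where $2^{j_n}\asymp(n\alpha^2)^{1/(2p+2)}$, that each of the above terms is $\lesssim(n\alpha^2)^{-(p-s)/(2p+2)}\vee n^{-(p-s)/(2p+1)}$; this is where the hypotheses $p\ge s+1$ (so the atomic linearization bias decays and is the leading contribution), $p\ge 2m-s$ (so the remainder bias and its variance analogue are dominated by the atomic rate) and the regularity condition $p\ge m+2$ are used. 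The hard part, I expect, is the uniform estimate on $\sum_k T_f(\mathbf{B}_{k,d,\mathbf{\xi}^{(j_n)}})^2$: the naive bound $\norm{\mathbf{B}_k^{(s)}}_{L^\infty}^2$ times the number $\asymp 2^{j_n}$ of indices $k$ gives $2^{j_n(2s+2)}$ and overshoots the rate by a factor $2^{j_n}$, so exploiting the $(d+1)$-fold overlap of the B-spline supports together with the uniform control of the atomic masses from Assumption~\ref{assum: no singular continuous part} is essential; a secondary difficulty is making the linearization argument rigorous on the low-probability event $\mathcal{A}^c$ via sub-exponential concentration of the privatized coefficients.
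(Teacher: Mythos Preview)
Your proposal is correct and shares the overall architecture of the paper's proof: the split on the event $\mathcal A=\{\|\hat f_n-f\|_{(\infty,m)}\le\eps_0\}$, the separate handling of bias $T_f(\mathcal S_nf-f)$ via \eqref{eq:bias}, and the treatment of the quadratic remainder $\E_f\|\hat f_n-f\|_{(2,m,\lambda)}^2$ via $L^2$ and pointwise MSE bounds for $(\hat f_n-f)^{(q)}$ are exactly what the paper does (see (B.3), the paragraphs following \eqref{eq: var decomp spline estim}, and Lemma~\ref{lem: exponential decay outer taylor exp}).

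The one genuine difference is your control of the stochastic linear part $\E_f|T_f(\hat f_n-\mathcal S_nf)|$. The paper does \emph{not} compute $\Var(\xi_1)$ directly; instead it first establishes the nonparametric rates
\[
\sup_{x_0}\E_f|(\hat f_n-f)^{(q)}(x_0)|^2\;\vee\;\E_f\|(\hat f_n-f)^{(q)}\|_{L^2(G)}^2\;\le\;\tau_{q,n}
\]
for every $q\le p-1$ and every absolutely continuous $G$ with bounded density, and then bounds $\E_f|\int(\hat f_n-f)^{(j)}d\mu_j|$ by splitting $\mu_j$ into its absolutely continuous and discrete parts and invoking the $L^2(G)$ and pointwise bounds respectively. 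Your route is more direct and exploits the key estimate $\sum_k T_f(\mathbf B_{k,d,\xi^{(j_n)}})^2\lesssim 2^{j_n(2s+1)}$ (which is correct, with the discrete part of $\mu_s$ producing the leading order via the $(d+1)$-fold overlap of the supports). This is exactly the strategy the paper itself uses for the \emph{smooth} case in the proof of Theorem~\ref{th: rate smooth} (the quantities $V_{1,j_n}$ and $V_{2,j_n}$ there). Both routes give the same rate for atomic functionals; the paper's route has the advantage that the quantities $\tau_{q,n}$ are reusable building blocks that reappear verbatim in the wavelet and adaptive sections, while your route would need to be redone with the wavelet basis.

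Two minor remarks. First, your phrase ``a crude deterministic quantity'' for the bound on $\mathcal A^c$ is imprecise: the Laplace noise is unbounded, so $|\Lambda(\hat f_n)-\Lambda(f)|$ is not deterministically bounded. What is actually needed (and what the paper does via (B.1), Cauchy--Schwarz and Lemma~\ref{lem: exponential decay outer taylor exp}) is to show that $\sqrt{\E_f\|\hat f_n-f\|_{(\infty,p)}^2}$ grows at most polynomially and then multiply by $\sqrt{\P(\mathcal A^c)}$. Your Bernstein-plus-union-bound argument for $\P(\mathcal A^c)$ is fine and in fact slightly simpler than the paper's McDiarmid argument (it only needs $p\ge m+1$ rather than $p\ge m+2$, though the theorem already assumes the latter). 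Second, your variance bound $\Var(\xi_1)\lesssim(1+\sigma_{\alpha,j_n}^2)\sum_kT_f(\mathbf B_k)^2$ implicitly uses that the $e_{k,j_n}$ have supports overlapping only in groups of at most $d+1$; the argument you sketch is right but this point deserves to be made explicit, since without it the cross terms $\E_f[e_k(X)e_{k'}(X)]$ could spoil the bound.
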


Notice that in view of Theorem~\ref{thm:LBatomic} and for bounded $\alpha=\alpha_n\le C$, the rate of Theorem~\ref{th: rate atomic} is, indeed, the fastest possible that any estimator of $\Lambda(f)$ can achieve for an atomic functional $\Lambda$ and a non-degenerate estimation problem $(\mathcal W,\Lambda)$, since $e^\alpha-1\le \alpha\frac{e^C-1}{C}$. However, this is not the case for smooth functionals $\Lambda$, which is why we turn to wavelet estimators in the next section.


\section{Spline wavelet plug-in estimator}
\label{sec:WaveletEst}

In this section we introduce an alternative estimator, along with a new privacy mechanism, that will allow us to obtain both, the optimal rate of Theorem~\ref{th: rate atomic} on atomic functionals and the parametric rate for smooth functionals. In the following, we consider wavelet estimators defined using a spline wavelet basis on the compact space $[0,1]$. Given the compact support property of $\psi, \psi_i$ in \eqref{eq: spline wavelets}, it follows that for any $j\geq j_0$ and $x\in[0,1]$, there are at most $3d+1$ non-zero wavelets at level $j$ and point $x$. Indeed, there are $d$ boundary wavelets ($\psi_{j,i}^{\text{left}}$ near $0$ and $\psi_{j,i}^{\text{left}}$ near $1$, and these have disjoint support since $j\geq j_0$ and $2^{j_0}\geq 2d+1$) and, among the inner wavelets, at most $2d+1$ are non-zero at any point in the unit interval. 
For some level $j_n$ to be defined later and $a>1$, we now define the private data for $\alpha>0$, $1\leq i\leq n$ and $j_0-1\leq j\leq j_n$ as

\begin{equation}\label{eq: private spline data}
    Z_{ijk} = \begin{cases}
        \mathbf{B}_{k,d,\xi^{(j_0)}}(X_i) + \sigma_{\alpha,j_0-1}Y_{i(j_0-1)k}, &\quad\text{if }j=j_0-1,k\in\mathcal{M}_{j_0-1},\\
        \psi_{j,k}(X_i)+\sigma_{\alpha,j} Y_{ijk}, &\quad\text{if }j\geq j_0, k\in\mathcal{M}_j,
    \end{cases}
\end{equation}
where $Y_{ijk}$ are independent Laplace distributed random variables with parameter 1, and
\begin{equation}\label{eq: noise spline private}\sigma_{\alpha,j_0-1}= \frac{4(d+1)^2}{\alpha}2^{j_0/2},\quad \sigma_{\alpha,j}=\frac{4(3d+1)\max(\norm{\psi}_{L^\infty},(\norm{\psi_i}_{L^\infty})_{i=-1,\dots,-d})}{\alpha} \frac{a}{a-1}j^a 2^{j/2},\end{equation}
for $j\geq j_0$. We will see that the spline degree $d$ only affects the constants but not the rates. Thus, for multi-purpose synthetic data release we will choose a large value of $d$ such that the conditions below are satisfied. We show in the following result that this defines $\alpha$-differentially private data.

\begin{proposition}\label{prop:PrivacyWavelets}
    The privacy mechanism defined by \eqref{eq: private spline data} and \eqref{eq: noise spline private} is $\alpha$-locally differentially private in the sense of \eqref{eq:LDP} irrespective of the value of $j_n$.
\end{proposition}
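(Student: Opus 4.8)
The plan is to verify the likelihood-ratio bound \eqref{eq:LDP} by checking that $\frac{dQ(\cdot|x)}{dQ(\cdot|x')}(z) \le e^\alpha$ for all $x,x' \in [0,1]$, exactly as in the proof of Proposition~\ref{th: privacy}, but now accounting for the fact that the private data span all resolution levels $j_0-1 \le j \le j_n$ simultaneously. Since the Laplace noises $Y_{ijk}$ are all independent, the conditional Lebesgue density of the vector $Z_i = (Z_{ijk})_{j,k}$ factorizes over $(j,k)$, so the log-likelihood ratio between $x$ and $x'$ is a sum over $j$ and $k$ of terms of the form $\sigma_{\alpha,j}^{-1}\big(|g_{jk}(x')-z_{jk}| - |g_{jk}(x)-z_{jk}|\big)$, where $g_{jk}$ is $\mathbf{B}_{k,d,\xi^{(j_0)}}$ for $j=j_0-1$ and $\psi_{j,k}$ for $j\ge j_0$. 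The reverse triangle inequality bounds each such term by $\sigma_{\alpha,j}^{-1}\big(|g_{jk}(x)| + |g_{jk}(x')|\big)$, so it suffices to show
\[
\sum_{j=j_0-1}^{j_n}\ \frac{1}{\sigma_{\alpha,j}}\sum_{k\in\mathcal M_j}\big(|g_{jk}(x)| + |g_{jk}(x')|\big) \le \alpha .
\]

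The key point — and this is what makes the bound independent of $j_n$ — is the compact support structure recalled at the start of Section~\ref{sec:WaveletEst}: at each fixed level $j \ge j_0$ and each fixed point $x$, at most $3d+1$ of the wavelets $\psi_{j,k}$ are non-zero, and each satisfies $\|\psi_{j,k}\|_{L^\infty} \le 2^{j/2}\big(\max_i\|\psi_i\|_{L^\infty}\vee\|\psi\|_{L^\infty}\big)$ by \eqref{eq: spline wavelets}; similarly at level $j_0-1$ at most $d+1$ of the $\mathbf{B}_{k,d,\xi^{(j_0)}}$ are non-zero at $x$, each bounded by (a constant times) $2^{j_0/2}$ via \eqref{eq: norm splines} and \eqref{eq: partition of unity}. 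Hence for $j\ge j_0$ the inner sum over $k$ of $|g_{jk}(x)|+|g_{jk}(x')|$ is at most $2(3d+1)\cdot 2^{j/2}\big(\max_i\|\psi_i\|_{L^\infty}\vee\|\psi\|_{L^\infty}\big)$, and dividing by $\sigma_{\alpha,j}$ as defined in \eqref{eq: noise spline private} gives a contribution of at most $\frac{\alpha}{2}\cdot\frac{a-1}{a}\,j^{-a}$; the $j_0-1$ term contributes at most $\frac{\alpha}{2}$. Summing, the total is at most $\frac{\alpha}{2} + \frac{\alpha}{2}\cdot\frac{a-1}{a}\sum_{j\ge j_0} j^{-a} \le \frac{\alpha}{2} + \frac{\alpha}{2}\cdot\frac{a-1}{a}\sum_{j\ge 1} j^{-a}$.

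The main (and only real) obstacle is bounding the tail series $\sum_{j\ge j_0} j^{-a}$ by $\frac{a}{a-1}$, so that the second term is at most $\frac{\alpha}{2}$ and the whole expression is $\le\alpha$ irrespective of how large $j_n$ is; this is precisely why the factor $\frac{a}{a-1}j^a$ was inserted into $\sigma_{\alpha,j}$. Since $a>1$, one has $\sum_{j\ge 1} j^{-a} \le 1 + \int_1^\infty t^{-a}\,dt = 1 + \frac{1}{a-1} = \frac{a}{a-1}$, which closes the estimate. Everything else is a routine assembly of the triangle-inequality step, the factorization of the product density, and the counting of active basis functions per level, so the proof is short once this convergent-series bookkeeping is in place.
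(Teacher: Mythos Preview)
Your proposal is correct and follows essentially the same approach as the paper's own proof: factorize the conditional density over levels and indices, apply the reverse triangle inequality, count at most $d+1$ active B-splines at level $j_0-1$ and at most $3d+1$ active wavelets at each level $j\ge j_0$, and then use the series bound $\sum_{j\ge 1}j^{-a}\le 1+(a-1)^{-1}=a/(a-1)$ to absorb the factor $\frac{a-1}{a}$ in $\sigma_{\alpha,j}$, yielding a total of at most $\alpha/2+\alpha/2=\alpha$ independently of $j_n$.
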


\begin{proof}
By definition, for fixed $i$, the conditional Lebesgue density of $Z_i=(Z_{ijk})_{j_0-1\leq j\leq j_n, k\in\mathcal{M}_j}$ given $X_i = x_i$ can be written as
 \begin{align*}
&\prod_{k\in\mathcal{M}_{j_0-1}}  \frac{1}{2\sigma_{\alpha,j_0-1}} \exp\left( -\frac{\left|\mathbf{B}_{k,d,\xi^{(j_0)}}(x_i)-z_{i(j_0-1)k}\right|}{\sigma_{\alpha,j_0-1}}\right)\\ &\quad\cdot \prod_{j=j_0}^{j_n}\prod_{k\in\mathcal{M}_{j}} \frac{1}{2\sigma_{\alpha, j}} \exp\left( -\frac{\left|\psi_{j,k}(x_i)-z_{ijk}\right|}{\sigma_{\alpha,j}}\right).
\end{align*}
An application of reverse and ordinary triangle inequality leads to
\begin{align*}
\frac{dQ(\cdot |x_i)}{dQ(\cdot |x_i')}(z)&\leq \prod_{k\in\mathcal{M}_{j_0-1}} \exp\left( \frac{\left|\mathbf{B}_{k,d,\xi^{(j_0)}}(x_i')-z_{i(j_0-1)k}\right|-\left|\mathbf{B}_{k,d,\xi^{(j_0)}}(x_i)-z_{i(j_0-1)k}\right|}{\sigma_{\alpha,j_0-1}}\right) \\
&\quad \cdot \prod_{j=j_0}^{j_n}\prod_{k\in\mathcal{M}_{j}} \exp\left( \frac{\left|\psi_{j,k}(x_i')-z_{ijk}\right|-\left|\psi_{j,k}(x_i)-z_{ijk}\right|}{\sigma_{\alpha,j}}\right) \\
\leq \exp&\left( \sum_{k\in\mathcal{M}_{j_0-1}}\frac{\left|\mathbf{B}_{k,d,\xi^{(j_0)}}(x_i')\right|+\left|\mathbf{B}_{k,d,\xi^{(j_0)}}(x_i)\right|}{\sigma_{\alpha,j_0-1}}\right)\\ &\quad\cdot\exp\left( \sum_{j=j_0}^{j_n}\sum_{k\in\mathcal{M}_{j}} \frac{\left|\psi_{j,k}(x_i')\right|+\left|\psi_{j,k}(x_i)\right|}{\sigma_{\alpha,j}}\right).
\end{align*}
We recall that for any $x$ in the unit interval there are at most $d+1$ non-zero splines $\mathbf{B}_{k,d,\xi^{(j_0)}}$ (it follows from (1.6) in Section 1 of \cite{LycheSpline}) and $3d+1$ non-zero wavelets, and $\norm{\mathbf{B}_{k,d,\xi^{(j_0)}}}_{L^\infty} \leq (d+1)2^{j_0/2}$ (unnormalized B-splines are bounded by $1$ and their squared $L^2$ norm are lower bounded by $(d+1)^{-2}2^{-j_0}$). We then have
\begin{align*}
\frac{dQ(\cdot |x_i)}{dQ(\cdot |x_i')}(z)
&\leq \exp\left(  \frac{2(d+1)^2 2^{j_0/2}}{\sigma_{\alpha,j_0-1}}\right)\exp\left( 2(3d+1)\, \max(\norm{\psi}_{L^\infty},(\norm{\psi_i}_{L^\infty})_{i=-1,\dots,-d})\, \sum_{j=j_0}^{j_n}\frac{2^{j/2}}{\sigma_{\alpha,j}}\right)\\
&\leq \exp\left(\frac{\alpha}{2} + \frac{\alpha}{2}\frac{a-1}{a} \sum_{j=j_0}^{j_n} j^{-a}\right)\leq \exp(\alpha).
\end{align*}
In the last line, we used that $\sum_{j=j_0}^{j_n} j^{-a} \leq \sum_{j=1}^{\infty} j^{-a}$ and $ \sum_{j=2}^{\infty} j^{-a}\leq \int_1^{+\infty} t^{-a} dt\leq (a-1)^{-1}$, so that $\sum_{j=j_0}^{j_n} j^{-a}\leq 1 + (a-1)^{-1} = a/(a-1)$.
\end{proof}

With these private data, we can use notation introduced in Section \ref{sec: splines wav} and define private empirical wavelet coefficients $\bar{Z}_{jk}=\frac1n \sum_{i=1}^n Z_{ijk}$ and plug-in estimator
\begin{equation}\label{eq: plug_in wavelet estimator}
\hat{f}_n(x)= \sum_{k\in\mathcal{M}_{j_0-1}} \bar{Z}_{(j_0-1)k}\Tilde{\psi}_{j_0-1,k}(x) + \sum_{j=j_0}^{j_n}\sum_{k\in\mathcal{M}_{j}} \bar{Z}_{jk} \Tilde{\psi}_{j,k}(x) \quad\text{ and }\quad \Lambda(\hat{f}_n),
\end{equation}
where we used the dual spline wavelets from \eqref{eq: notation dual spline}. This estimator can now be shown to be optimal on both atomic and smooth functionals. See Section~\ref{app:Main} for the proofs of the following results.

\begin{theorem}\label{th: atomic wav}
    Let $f\in \mathcal{W}_{p}$, $p\in\mathbb{N}^*$ and suppose $\Lambda$ is an atomic functional of index $s$ satisfying \eqref{eq: form functional}. Under Assumption \ref{assum: no singular continuous part} and $p\geq \max(s+1, m+2, 2m-s)$, the plug-in estimator obtained from 
    \eqref{eq: plug_in wavelet estimator} with $2^{j_n}\leq (n\alpha^2\log^{-2a} n)^{\frac{1}{2p+2}}\wedge n^{\frac{1}{2p+1}} < 2^{j_n+1}$ and $p+1\leq d$ satisfies
    \[\underset{f\in\mathcal{W}_p}{\sup}\mathbb{E}_f|\Lambda(f)-\Lambda(\hat{f}_n)|\leq C_{d,M}\,\left[ (n\alpha^2 \log^{-2a} n)^{-\frac{(p-s)}{2p+2}}\vee n^{-\frac{(p-s)}{2p+1}}\right],\]
    where $C_{d,M}$ is a constant depending on $d$ and $M$ only.
\end{theorem}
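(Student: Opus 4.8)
The plan is to follow the proof of Theorem~\ref{th: rate atomic} essentially verbatim in structure: linearise $\Lambda$ via \eqref{eq: form functional} and bound the linear term and the quadratic remainder separately. The only new feature is that the Laplace scales \eqref{eq: noise spline private} now carry the extra factor $j^a$, which is exactly what turns $n\alpha^2$ into $n\alpha^2\log^{-2a}n$ in the final rate (since $j_n\asymp\log n$). The organising remark is that $\E\hat f_n=\Pi_{j_n}f$, the $L^2$-orthogonal projection onto $\mathcal V_{j_n+1}$: indeed $\E[\bar Z_{jk}]=\langle f,\psi_{j,k}\rangle$ and, within each level, $\{\psi_{j,k}\}_{k\in\mathcal M_j}$ is a frame with dual $\tilde\psi_{j,k}=S_j^{-1}\psi_{j,k}$, so the frame reconstruction formula (Theorem~\ref{th: frame operator} and the discussion after it) gives $\sum_k\langle f,\psi_{j,k}\rangle\tilde\psi_{j,k}=P_{W_j}f$ at each level, with $W_{j_0-1}=\mathcal V_{j_0}$ and $W_j=\mathcal U_j$. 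I would then write $\hat f_n-f=(\hat f_n-\Pi_{j_n}f)+(\Pi_{j_n}f-f)$ and split $\hat f_n-\Pi_{j_n}f=G_n+R_n$ into the empirical-process part $G_n=\sum_{j,k}\big(\tfrac1n\sum_i(\psi_{j,k}(X_i)-\E\psi_{j,k}(X_i))\big)\tilde\psi_{j,k}$ and the Laplace-noise part $R_n=\sum_{j,k}\sigma_{\alpha,j}\big(\tfrac1n\sum_i Y_{ijk}\big)\tilde\psi_{j,k}$. Throughout I would use the uniform frame bounds $A_d,B_d$, the bound $|\psi_{j,k}|\le C_d2^{j/2}$, and the fact that the dual wavelets are exponentially localised (from the banded structure of $S_j$), whence $\|\tilde\psi_{j,k}^{(l)}\|_{L^\infty}\le C_d2^{j(l+1/2)}$ and $\|\tilde\psi_{j,k}^{(l)}\|_{L^2}^2\le C_d2^{2jl}$.

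For the linear term I would write $T_f(\hat f_n-f)=T_f(\Pi_{j_n}f-f)+T_f(G_n)+T_f(R_n)$. Using $f-\Pi_{j_n}f=\sum_{j>j_n}\sum_i\langle f,\psi_{j,i}\rangle\tilde\psi_{j,i}$, the coefficient decay \eqref{eq: decay spline wavelet coeff} (valid since $p+1\le d$) and the localisation of $\tilde\psi_{j,i}$ give $\|(\Pi_{j_n}f-f)^{(l)}\|_{L^\infty}\le C_{d,M}2^{-j_n(p-l)}$ for $l<p$; combined with $|T_f(h)|\le C\max_{l\le s}\|h^{(l)}\|_{L^\infty}$ (from \eqref{eq: Hahn-Banach}, \eqref{eq:bound total var} and the atomic structure) this bounds the deterministic bias by $C_{d,M}2^{-j_n(p-s)}$. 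For the two random terms set $t_{jk}:=T_f(\tilde\psi_{j,k})$; then $T_f(G_n)=\tfrac1n\sum_i(\varphi(X_i)-\E\varphi(X_i))$ with $\varphi=\sum_{j,k}t_{jk}\psi_{j,k}$, so $\E[T_f(G_n)^2]\le\tfrac{\|f\|_\infty}{n}\|\varphi\|_{L^2}^2\le\tfrac Cn\sum_{j,k}t_{jk}^2$ by the frame synthesis bound, while independence of the $Y_{ijk}$ gives $\E[T_f(R_n)^2]=\tfrac2n\sum_j\sigma_{\alpha,j}^2\sum_k t_{jk}^2$. The crucial estimate, under Assumption~\ref{assum: no singular continuous part}, is $\sum_{k\in\mathcal M_j}t_{jk}^2\le C_{d,M}2^{j(2s+1)}$: the discrete component of $\mu_{s,f}$ contributes $\tilde\psi_{j,k}^{(s)}$ at its atoms, of size $O(2^{j(s+1/2)})$ and significant for only $O(1)$ indices $k$ per atom and level, whereas the absolutely continuous parts and lower-order derivatives contribute the smaller $O(2^{2js})$. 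Summing over $j\le j_n$ (geometric) and using $\sigma_{\alpha,j}^2\asymp\alpha^{-2}j^{2a}2^j$ yields $\E[T_f(G_n)^2]\le\tfrac Cn2^{j_n(2s+1)}$ and $\E[T_f(R_n)^2]\le\tfrac{C}{n\alpha^2}j_n^{2a}2^{j_n(2s+2)}$. Plugging in $2^{j_n}\asymp(n\alpha^2\log^{-2a}n)^{1/(2p+2)}\wedge n^{1/(2p+1)}$ and $j_n\asymp\log n$, exactly the arithmetic from the proof of Theorem~\ref{th: rate atomic} shows each of these three contributions is $\le C_{d,M}\big[(n\alpha^2\log^{-2a}n)^{-(p-s)/(2p+2)}\vee n^{-(p-s)/(2p+1)}\big]$.

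It remains to show the quadratic remainder $\E\|\hat f_n-f\|_{(2,m,\lambda)}^2$ is of the same or smaller order, and to dispose of the event where \eqref{eq: form functional} is not applicable. For the remainder I would use Tonelli, $\E\int|(\hat f_n-f)^{(l)}|^2 d\lambda_l=\int\E|(\hat f_n-f)^{(l)}(x)|^2 d\lambda_l(x)\le\lambda_l([0,1])\,\sup_x\E|(\hat f_n-f)^{(l)}(x)|^2$, and then the same bias/$G_n$/$R_n$ split gives $\sup_x\E|(\hat f_n-f)^{(l)}(x)|^2\le C\big(2^{-2j_n(p-l)}+\tfrac1n2^{j_n(2l+1)}+\tfrac{j_n^{2a}}{n\alpha^2}2^{j_n(2l+2)}\big)$ for $l\le m$ — crucially with no extra $\log$ factor, because we integrate a pointwise second moment against a finite measure rather than bounding the supremum of a process. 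With the chosen $2^{j_n}$ and $\alpha$ bounded, one checks this is $\le C_{d,M}\big[(n\alpha^2\log^{-2a}n)^{-(p-s)/(2p+2)}\vee n^{-(p-s)/(2p+1)}\big]$ precisely under $p\ge\max(s+1,m+2,2m-s)$. On the complementary event I would bound $|\Lambda(\hat f_n)-\Lambda(f)|$ crudely through \eqref{eq: bound deriv func} (or the explicit form of $\Lambda$) and control its probability via Bernstein's inequality for $\|G_n\|_{(\infty,m)}$ and a sub-exponential tail bound for $\|R_n\|_{(\infty,m)}$; since $2^{j_n}\lesssim n^{1/(2p+1)}$ and $m<p$, this probability decays fast enough in $n$ for the contribution to be negligible. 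Assembling the pieces gives the stated rate.

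The step I expect to be the main obstacle is the sharp treatment of the dual spline wavelets, which (unlike the B-splines of Section~\ref{sec:SplineEst}) are not compactly supported: the estimates $\sum_k t_{jk}^2\asymp2^{j(2s+1)}$, $\|\tilde\psi_{j,k}^{(l)}\|_{L^\infty}\asymp2^{j(l+1/2)}$ and $\|\tilde\psi_{j,k}^{(l)}\|_{L^2}^2\asymp2^{2jl}$ must be squeezed out of the banded structure of $S_j$ and the exponential off-diagonal decay of $S_j^{-1}$, or else obtained abstractly by staying at the level of frame coefficients with the uniform bounds $A_d,B_d$. The second delicate point is purely one of bookkeeping: matching not only the powers of $n\alpha^2$ but also the powers of $\log n$ across the bias, the noise term and the remainder, which is exactly where the hypotheses $p\ge\max(s+1,m+2,2m-s)$ are consumed.
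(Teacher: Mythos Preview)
Your proposal is correct and shares the overall architecture with the paper (linearise via \eqref{eq: form functional}, bound the linear piece, the quadratic remainder, and the complementary event), but the paper takes a more modular route in two places. For the bias $\|(\Pi_{j_n}f-f)^{(q)}\|_{L^\infty}$ it does not expand in the tail wavelet series as you propose; instead it uses the quasi-interpolant bound \eqref{eq:bias}, De~Boor's conjecture (Shadrin's theorem) on the uniform $L^\infty$-boundedness of $P_{\mathcal V_j}$, and Theorem~\ref{th: from L2 to l2} to iterate \eqref{eq: derivatives} up to the $q$th derivative---this sidesteps any infinite-series convergence issue and any need for off-diagonal decay of $S_j^{-1}$. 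For the linear term, rather than working directly with $t_{jk}=T_f(\tilde\psi_{j,k})$, the paper first establishes full nonparametric risk bounds $\sup_{x_0}\E|\hat f_n^{(q)}(x_0)-f^{(q)}(x_0)|^2\vee\E\|\hat f_n^{(q)}-f^{(q)}\|_{L^2(G)}^2$ for arbitrary $G$ with bounded density, and then plugs these into the Jordan-decomposition argument from the end of the proof of Theorem~\ref{th: rate atomic}, splitting each $|\mu_j|$ into continuous and discrete parts. Your direct coefficient estimate and the paper's pointwise variance calculation both ultimately rest on the Gram-matrix inequality $\sum_{k\in\mathcal M_j}\tilde\psi_{j,k}^{(q)}(x_0)^2\le C_d2^{(1+2q)j}$ (the paper proves it via the spectrum of $G_{\tilde\psi,j}$), so the arithmetic matches; the paper's route has the advantage that the nonparametric risk bounds are reusable (they are the inputs to Section~\ref{sec:Adaptation}) and that the absolutely continuous components of the $\mu_l$ are handled automatically by the $L^2(G)$ bound, without any separate localisation argument for the dual wavelets.
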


The spline wavelet plug-in approach does, indeed, achieve the lower bound of Theorem~\ref{thm:LBatomic} and is thus rate optimal. However, the optimal choice of the resolution level $j_n$ still depends on the unknown smoothness $p$ of the density $f$. Thus, we develop an adaptive procedure in Section~\ref{sec:Adaptation} below.

On smooth functionals we obtain the optimal parametric rate.

\begin{theorem}\label{th: rate smooth}
    Let $f\in \mathcal{W}_{p}$, $p\in\mathbb{N}^*$, and suppose $\Lambda$ is a smooth functional satisfying \eqref{eq: form functional} with $m\geq0$, such that for some $M^\prime>0$, $\omega_f\in \mathcal{B}_{(\infty,1)}(M^\prime)$ satisfies \eqref{eq: bound deriv omega}. Under Assumption \ref{assum: no singular continuous part} and $2m+2< p\leq d-1$ the plug-in estimator obtained from \eqref{eq: plug_in wavelet estimator} with $a'>1/4$ and 
\begin{equation} \label{eq:SmoothJn}
\begin{split}
\left(n\wedge(n\alpha^2)\right)^{1/2p}&\leq 2^{j_n} \leq \left[\log^{-a/(m+1)}(n\alpha^2)(n\alpha^2)^{1/(4m+4)}\right]\wedge n^{1/(4m+3)}\text{ if }m>0, \\
\left(n\wedge(n\alpha^2)\right)^{1/2p}&\leq 2^{j_n} \leq \left[\log^{-a}(n\alpha^2)(n\alpha^2)^{1/4}\right]\wedge \left[\log^{-a'}(n)n^{1/4}\right]\qquad\text{ if }m=0,
\end{split}
\end{equation}
satisfies 
\[\underset{f\in\mathcal{W}_p}{\sup}\mathbb{E}_f|\Lambda(f)-\Lambda(\hat{f}_n)|\leq C_{a,d,M}\, \left[n^{-1/2}\vee(n\alpha^2)^{-1/2}\right],\]
up to a constant depending on $a$, $d$, and $M$. Above, we can also take $a'=0$ whenever $m\neq 0$.
\end{theorem}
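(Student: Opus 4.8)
Write $h:=\hat f_n-f$. The plan is to use the expansion \eqref{eq: form functional}, namely $\Lambda(\hat f_n)-\Lambda(f)=T_f(h)+O(\norm{h}_{(2,m,\lambda)}^2)$, which holds on the event $A_n:=\{\norm{h}_{(\infty,m)}\le\eps_0\}$, with $\eps_0>0$ the fixed radius of validity of \eqref{eq: form functional}. On $A_n^c$ one controls $|\Lambda(\hat f_n)-\Lambda(f)|$ by a deterministic power of $n$ (using \eqref{eq: bound deriv func} and that $\norm{\hat f_n}$ is at most polynomial in $n$) against $\P(A_n^c)$, which is shown to be smaller than any power of $n$ via Bernstein/Talagrand concentration for the sub-exponential variables $\psi_{jk}(X_i)$ and $Y_{ijk}$ together with $\E\norm{h}_{(\infty,m)}\to0$ (a by-product of the moment bounds below). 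So it remains to bound $\E|T_f(h)|$ and $\E\norm{h}_{(2,m,\lambda)}^2$ on $A_n$ by $n^{-1/2}\vee(n\alpha^2)^{-1/2}$. For both I split, level by level over $j=j_0-1,\dots,j_n$,
\[
h=\underbrace{\textstyle\sum_{j,k}(\hat c_{jk}-c_{jk})\tilde\psi_{jk}}_{=:g_n}+\underbrace{\textstyle\sum_{j,k}\sigma_{\alpha,j}\bar Y_{jk}\tilde\psi_{jk}}_{=:N_n}+\underbrace{\textstyle\big(\sum_{j,k}c_{jk}\tilde\psi_{jk}-f\big)}_{=:b_n},
\]
where $c_{jk}=\langle f,\psi_{jk}\rangle$, $\hat c_{jk}=\frac1n\sum_i\psi_{jk}(X_i)$ and, by the frame duality $\tilde\psi_{jk}=S_j^{-1}\psi_{jk}$ and the orthogonality $\mathcal V_{j_n+1}=\mathcal V_{j_0}\oplus\bigoplus_{j_0\le j\le j_n}\mathcal U_j$, one has $\sum_{j,k}c_{jk}\tilde\psi_{jk}=P_nf$, the $L^2$-orthogonal projection of $f$ onto $\mathcal V_{j_n+1}$, so that $b_n=f-P_nf\perp\mathcal V_{j_n+1}$. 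Moreover $g_n=\frac1n\sum_i(V_i-\E V_i)$ with $V_i=\sum_{j,k}\psi_{jk}(X_i)\tilde\psi_{jk}$, and the coefficients $\bar Y_{jk}$ of $N_n$ are independent and centered.

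\textbf{The linear term.} Smoothness of $\Lambda$ gives $T_f(h)=\int_0^1 h\,\omega_f=\langle g_n,\omega_f\rangle+\langle N_n,\omega_f\rangle+\langle b_n,\omega_f\rangle$. Since $b_n\perp\mathcal V_{j_n+1}$, $\langle b_n,\omega_f\rangle=\langle b_n,\omega_f-P_n\omega_f\rangle$, so by \eqref{eq:bias} applied to $f$ (smoothness $p$) and to $\omega_f$ (smoothness $1$, from $\omega_f\in\mathcal B_{(\infty,1)}(M')$) one gets $|\langle b_n,\omega_f\rangle|\le\norm{f-P_nf}_{L^2}\norm{\omega_f-P_n\omega_f}_{L^2}\lesssim 2^{-j_n(p+1)}$, which the lower bound $2^{j_n}\ge(n\wedge n\alpha^2)^{1/2p}$ makes $\lesssim(n\wedge n\alpha^2)^{-1/2}$. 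For the other two parts I use the level expansions $g_n=\sum_j g_n^{(j)}$, $N_n=\sum_j N_n^{(j)}$ with $g_n^{(j)},N_n^{(j)}\in\mathcal U_j$ (or $\mathcal V_{j_0}$), the decay $\norm{P_{\mathcal U_j}\omega_f}_{L^2}\lesssim 2^{-j}$ from \eqref{eq: decay spline wavelet coeff} with $l=1$, and the variance bound $\E\norm{g_n^{(j)}}_{L^2}^2\lesssim\frac1n\sum_k\Var(\hat c_{jk})\lesssim 2^j/n$ (the $\psi_{jk}$ being $L^2$-normalized with $O(1)$ nonzero at each point). By orthogonality of levels, $\langle g_n,\omega_f\rangle=\sum_j\langle g_n^{(j)},P_{\mathcal U_j}\omega_f\rangle$, so $\E|\langle g_n,\omega_f\rangle|\le\sum_j\sqrt{2^j/n}\,2^{-j}\lesssim n^{-1/2}$; and since the coefficients of $N_n$ are independent and centered, $\E[\langle N_n,\omega_f\rangle^2]=\frac2n\sum_{j,k}\sigma_{\alpha,j}^2\langle\tilde\psi_{jk},\omega_f\rangle^2\lesssim\frac1{n\alpha^2}\sum_j j^{2a}2^j\cdot 2^{-2j}$, a convergent series, giving $\E|\langle N_n,\omega_f\rangle|\lesssim(n\alpha^2)^{-1/2}$. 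The key point is that the $2^{-2j}$ produced by one derivative of $\omega_f$ dominates the $2^j$ growth of $\sigma_{\alpha,j}^2$: spread over the multiresolution hierarchy, the privacy noise only contributes at the parametric rate, which is exactly the advantage over the plain B-spline mechanism (cf.\ Remark~\ref{rk: failure spline smooth}).

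\textbf{The quadratic remainder.} By Assumption~\ref{assum: no singular continuous part} each $\lambda_q$ is absolutely continuous with bounded density plus a pure-point part on finitely many atoms of bounded total mass, so $\norm{h}_{(2,m,\lambda)}^2\lesssim\sum_{q=0}^m\big(\norm{h^{(q)}}_{L^2}^2+\sum_{x_l}|h^{(q)}(x_l)|^2\big)$ up to a constant depending on $M$. For each $q\le m$ I bound the three parts of $h^{(q)}=g_n^{(q)}+N_n^{(q)}+b_n^{(q)}$: (i) $\norm{b_n^{(q)}}_{L^\infty}\lesssim 2^{-j_n(p-q)}$ by \eqref{eq:bias} (here $q\le m<p\le d-1$); (ii) by the Markov and inverse inequalities on splines of mesh $2^{-(j+1)}$ and degree $d\ge q$, $\norm{\tilde\psi_{jk}^{(q)}}_{L^2}\lesssim 2^{qj}$ and $\norm{\tilde\psi_{jk}^{(q)}}_{L^\infty}\lesssim 2^{(q+1/2)j}$, so level orthogonality yields $\E\norm{g_n^{(q)}}_{L^2}^2$ and $\E|g_n^{(q)}(x_l)|^2$ of order $2^{c\,j_n}/n$ for an explicit $c=c(q)\le 2q+2$, and $\E\norm{N_n^{(q)}}_{L^2}^2+\E|N_n^{(q)}(x_l)|^2\lesssim\frac1{n\alpha^2}\sum_j j^{2a}2^j\cdot 2^j 2^{2qj}\asymp\frac{j_n^{2a}2^{(2q+2)j_n}}{n\alpha^2}$, the $\bar Y_{jk}$ being independent with variance $2/n$. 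Taking $q=m$: the squared bias is $\lesssim(n\wedge n\alpha^2)^{-(p-m)/p}\le(n\wedge n\alpha^2)^{-1/2}$ because $p>2m$; the sampling contribution is $\lesssim n^{-1/2}$ precisely because $2^{j_n}\le n^{1/(4m+3)}$ (resp.\ $2^{j_n}\le\log^{-a'}(n)n^{1/4}$ when $m=0$); and the noise contribution is $\lesssim(n\alpha^2)^{-1/2}$ precisely because $2^{(2m+2)j_n}j_n^{2a}\lesssim(n\alpha^2)^{1/2}$, which is what $2^{j_n}\le\log^{-a/(m+1)}(n\alpha^2)(n\alpha^2)^{1/(4m+4)}$ encodes (the logarithmic power absorbing the $j_n^{2a}$ coming from $\sigma_{\alpha,j}^2$; for $m=0$ this reads $\log^{-a}(n\alpha^2)(n\alpha^2)^{1/4}$). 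Hence $\E\norm{h}_{(2,m,\lambda)}^2\lesssim n^{-1/2}\vee(n\alpha^2)^{-1/2}$; the $j_n$-window is non-empty exactly because $2m+2<p$, and $p\le d-1$ leaves enough spline degree for all the Markov estimates up to order $m$ and for the coefficient decay \eqref{eq: decay spline wavelet coeff} with $l\le p$.

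Combining the linear and quadratic bounds on $A_n$ with the negligible contribution of $A_n^c$ gives $\sup_{f\in\mathcal{W}_p}\E_f|\Lambda(f)-\Lambda(\hat f_n)|\lesssim n^{-1/2}\vee(n\alpha^2)^{-1/2}$ with implied constant depending only on $a,d,M$; when $m\ne0$ one may take $a'=0$, since then the constraint $2^{j_n}\le n^{1/(4m+3)}$ already controls the sampling fluctuation without a logarithmic correction. The main obstacle is step (ii): pushing the sampling and privacy-noise fluctuations of the $m$-th derivative of a spline-wavelet plug-in down to the parametric rate requires the correct inverse/Markov and support bounds for the dual wavelets $\tilde\psi_{jk}$ — read off from the construction of \citet{jia2006stable} and the structure of the frame operators $S_j$ — together with the level orthogonality of the multiresolution analysis, and it is this combination that pins down the admissible range of $j_n$ (and forces the separate, slightly more conservative window in the delicate case $m=0$).
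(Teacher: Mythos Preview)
Your proof follows essentially the same route as the paper: decompose $h=\hat f_n-f$ into bias $P_{\mathcal V_{j_n+1}}f - f$, sampling fluctuation, and Laplace noise; bound $T_f(h)=\langle h,\omega_f\rangle$ at the parametric rate by exploiting the decay $\|P_{\mathcal U_j}\omega_f\|_{L^2}\lesssim 2^{-j}$ to absorb the growth of $\sigma_{\alpha,j}^2$; bound the quadratic remainder via the $L^2$ and pointwise risk of $h^{(q)}$ up to $q=m$; and control $A_n^c$ by concentration. One cosmetic difference: the paper handles the sampling part of $T_f$ more directly, recognising its variance as $n^{-1}\int f(P_{\mathcal V_{j_n+1}}\omega_f)^2\le 4Mn^{-1}\|\omega_f\|_{L^2}^2$, whereas you sum $\E|\langle g_n^{(j)},P_{\mathcal U_j}\omega_f\rangle|$ level by level; both give $n^{-1/2}$.

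There is, however, a misattribution. You write that the sampling contribution to the remainder is $\lesssim n^{-1/2}$ ``precisely because $2^{j_n}\le n^{1/(4m+3)}$ (resp.\ $2^{j_n}\le\log^{-a'}(n)n^{1/4}$ when $m=0$)''. For $m=0$ the sampling variance of the remainder is only $2^{j_n}/n$, so even $2^{j_n}\le n^{1/2}$ would suffice there; neither the exponent $1/4$ nor the $\log^{-a'}$ factor comes from this step. In the paper both arise from the control of $\mathbb P(A_n^c)$: the bounded-differences argument (McDiarmid) for $\norm{f-\hat f_{1,X}}_{(2,m+1)}$ gives an increment bound $\lesssim 2^{(m+2)j_n}/n$, hence $\mathbb P(A_n^c)\lesssim \exp(-C\,n\,2^{-2(m+2)j_n})$, which for $m=0$ forces $n\,2^{-4j_n}\to\infty$; taking $2^{j_n}\le\log^{-a'}(n)n^{1/4}$ with $a'>1/4$ makes this $\gtrsim \log^{4a'}n$, enough to beat any polynomial in $n$. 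For $m>0$ the constraint $2^{j_n}\le n^{1/(4m+3)}$ already yields a positive power of $n$ in the exponent, which is why $a'$ can be dropped. This is not a structural gap in your argument---you do flag that $A_n^c$ must be controlled---but the explanation of where $a'>1/4$ enters should point to the concentration step, not the quadratic remainder.
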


It is important to notice that the upper bound in \eqref{eq:SmoothJn} of Theorem~\ref{th: rate smooth} does not depend on the unknown smoothness $p$ of the density $f$ and choosing $j_n$ to be of that order leads to an optimal procedure. Hence, Theorem~\ref{th: rate smooth} offers already a fully data driven estimation procedure and there is no need for adaptive results in the smooth case.

\begin{remark}\normalfont
\label{rk: failure spline smooth}
    So far, we obtained results for smooth and atomic functionals with plug-in estimators \eqref{eq: plug_in wavelet estimator} based on spline wavelets. However, for plug-in estimators \eqref{eq: estimator} based on splines we only proved convergence rates for atomic functionals. The reason is that the spline estimator does not offer a good enough control of the variance coming from the Laplace noise in the privacy mechanism \eqref{eq: private data}. An inspection of the proof of Theorem \ref{th: rate smooth} shows that parametric rates are achieved in the smooth case because the term $V_{2,j_n}$ is upper-bounded by a constant free of $n$. This is true when spline wavelets are used because they leverage the smoothness of $\omega_f$ in that $\left|\langle \omega_f, \Tilde{\psi}_{j,k}\rangle\right|$ is decreasing exponentially fast in $j$ (see \eqref{eq: decay spline wavelet coeff}). An equivalent expression for $V_{2,j_n}$ with spline-based estimators would be
    \[V_{2,j_n}=2\sigma_{\alpha,j}^2\sum_{k=1}^{2^{j_n}+d} T_f(\mathbf{B}_{k,d,\xi^{(j_n)}})^2=2\sigma_{\alpha,j}^2\sum_{k=1}^{2^{j_n}+d} \left|\langle \omega_f, \mathbf{B}_{k,d,\xi^{(j_n)}}\rangle\right|^2.\]
    Since the normalized B-splines $\mathbf{B}_{k,d,\xi^{(j_n)}}$ form a frame with bounds independent of $n$, the above display is lower bounded by $C\sigma_{\alpha,j}^2\norm{\omega_f}_{L^2}^2=
    \Tilde{C}\alpha^{-2}2^{j_n}\norm{\omega_f}_{L^2}^2$. Therefore, this term cannot be bounded, and B-splines cannot leverage the assumed regularity of $\omega_f$ to counterbalance the variance of the Laplace noise. Finally, we note that it is independent of the scaling chosen for the B-splines, since multiplying $\mathbf{B}_{k,d,\xi^{(j_n)}}$ by a constant $c_n$ results in $e_{k,j_n}$ in \eqref{eq: dual basis splines} being replaced with $e_{k,j_n}/c_n$ and $\sigma_{\alpha,j_n}$ being replaced with $\sigma_{\alpha,j_n}/c_n$ in Proposition~\ref{th: privacy}. Therefore, the above upper bound would remain unchanged.
\end{remark}

\section{Adaptation with spline wavelets}
\label{sec:Adaptation}

As seen in Theorem \ref{th: atomic wav}, the optimal threshold is $2^{j_n}\asymp (n\alpha^2\log^{-2a}n)^{\frac{1}{2p+2}}\wedge n^{\frac{1}{2p+1}}$ for atomic functionals (not depending on the smoothness $s$ of the functional). This, however, depends on the regularity of the unknown truth $f_0\in \mathcal{W}_{p}$, which is typically not known in practice. Therefore, one has to provide a data driven choice of this tuning parameter which results in optimal convergence rates for all smoothness classes, simultaneously. For this we adapt Lepski's method for this privacy constrained setting.

We start by deriving some bounds following from the non-adaptive analysis. First, as a direct consequence of \eqref{eq: bound l2 sum adj wav} (see Appendix \ref{app:Main}),
\begin{align}
 \norm{\tilde\psi_{jk}^{(s+1)}}_{L^\infty}^2\leq \Tilde{C}_{d,s} 2^{j(2s+3)}\label{UB:basis:deriv}
\end{align}
for some universal constant $\Tilde{C}_{d,s}$. Furthermore, in view of \eqref{eq: supnorm bias proj} in Appendix \ref{app:Main}, we have for all $s\in\{0,1,..,\lfloor p\rfloor\}$ that, for $P_{\mathcal{V}_j} f$ the orthogonal projection of $f$ on $\mathcal{V}_j$,
\begin{align}
\|(f-P_{\mathcal{V}_j} f)^{(s)}\|_{L^\infty}\leq C_{p,d} 2^{-(p-s) j}\|f^{(p)}\|_{L^{\infty}}=:B(j,f,s,p).\label{eq:def:B}
\end{align}
Also note that
\begin{align*}
\|( f-P_{\mathcal{V}_j} f)^{(s)}\|_{L^2}\leq \| (f-P_{\mathcal{V}_j} f)^{(s)}\|_{L^\infty}\leq B(j,f,s,p).
\end{align*}
In view of \eqref{variance stim sp} and \eqref{variance stim sp derivatives} in Appendix \ref{app:Main}, note that for all $s\in\{0,1,..,\lfloor p\rfloor\}$
\begin{align}
\E\|(\hat{f}_{n}^{j})^{(s)}-\E (\hat{f}_n^{j})^{(s)}\|_{L^2}^2\lesssim n^{-1}2^{2js}(2^{j}+2^{2j}j^{2a}\alpha^{-2}).\label{eq:UB:var}
\end{align}
Then the optimal, oracle choice of the thresholding parameter is
\begin{align}
j_n^*=\min\{j\in\mathcal{J}: B(j,f,s,p)^2\leq n^{-1}2^{2js}j(2^{j}+2^{2j}j^{2a}\alpha^{-2}), \forall s\in\{0,1,..,\lfloor p\rfloor\}\},\label{def:j*}
\end{align}
where $\mathcal{J}=\{1,...,j_{\max}\}$ with $j_{\max}=(\log n)/3$.\footnote{We note that this upper bound follows from the assumption $p\geq 1$. If this assumption is relaxed one has to choose a larger $j_{\max}$.} Furthermore, note that we have an extra $j$ multiplier on the right hand side, which will result in a logarithmic factor loss in the proof. Finally, note that the above $j_n^*$ satisfies that 
\begin{equation}
2^{j^*_n}\asymp (n\alpha^2\log^{-(1+2a)} n)^{1/(2+2p)}\wedge (n/\log n)^{1/(1+2p)},\label{eq:asymp:oracle}
\end{equation}
 and as a consequence
\begin{align}
 n^{-1}2^{2j_n^*s}\big(2^{j_n^*}+2^{2j_n^*}(j_n^*)^{2a}\alpha^{-2}\big)\lesssim (n/\log n)^{-\frac{p-s}{2p+1}}\vee  (n\alpha^2\log^{-(1+2a)} n)^{-\frac{p-s}{2p+2}}.\label{ub:var:jn*}
\end{align}

This oracle choice $j_n^{*}$, however, depends on $B(j,f,s,p)$ which in turn depends on the unknown regularity $p$. Hence we consider a data driven choice for the optimal threshold $j^*_n$. We do not consider an empirical version of the supremum norm as it would provide an additional logarithmic factor, while we are interested in pointwise and $L^2$-concentration rates in the first place. Furthermore, controlling the variance in supremum norm of our estimator is technically highly challenging. In our proof one of the key step is to achieve the optimal concentration rate for arbitrary, unspecified point $x\in[0,1]$. Since the estimator can not depend on the unknown $x$, we introduce a fine enough grid and define the estimator on it. This turns out to be sufficiently accurate approach to provide optimal estimation rate at arbitrary point $x$.

Let us introduce the grid $x_t=t/M_n$, $t=0,...,M_n$, for $M_n\gtrsim n^{4/3}$ and define the estimator as
\begin{align}
\hat{j}_n=\min \Big\{ j\in\mathcal{J}:\,& \| (\hat{f}_n^{j}-\hat{f}_n^l)^{(s)}\|_{L^2}^2\vee \max_{x_t,\, t=0,...,M_n}|(\hat{f}_n^{j}- \hat{f}_n^l )^{(s)}(x_t)|^2\nonumber\\
&\leq \tau  n^{-1}2^{2ls}l (2^{l}+2^{2l}l^{2a}\alpha^{-2}),
 \qquad\forall l>j,\, l\in\mathcal{J},\forall s\in\{0,1,..,\lfloor p\rfloor\} 
 \Big\},\label{def:hat:jn}
\end{align}
with $\tau= C_0c_{a,p,s,L,\psi}$ and $C_0$ as in Lemma \ref{lem:overshoot}, for a large enough universal constant $c_{a,p,s,L,\psi}$ to be specified in the proof.
Then we define our adaptive estimator as
\begin{align}
\hat{f}_n(x)=\hat{f_n}^{\hat{j}_n}(x).\label{def:Lepski}
\end{align}
The theorem below shows that this estimator achieves the minimax concentration rate both pointwise and for the $L^2$--norm.

Before we state the theorem, we provide a lemma that controls the probability of overfitting by providing exponential upper bound for selecting larger thresholds $\hat{j}_n$ than $j_n^*$. In the proof we apply Bernstein's inequality, similarly to  Proposition B.1 of \cite{ButuceaPrivacy}. In our setting, however, we work with a Riesz (not orthogonal) basis, we provide simultaneously $L^2$ and pointwise and not supremum norm control on the error (and hence do not lose an additional $\log n$ factor), consider the estimation of the whole function, not a specific functional and derive results for the (up to $s$th order) derivative of the functions as well, making our proofs substantially different and more involved. See Section~\ref{sec:app:adaptation} for the proofs of the results in this section.

\begin{lemma}\label{lem:overshoot}
For $f\in\mathcal{W}_{p}$, $p\in\mathbb{N}$, and $j>j_n^*$ we have for $M_n\gtrsim n^{4/3}$  that
\begin{align*}
\mathbb{P}(\hat{j}_n=j)\leq  2^6j_{\max} (M_n+1)( p+1) e^{-(C_0/2)j},
\end{align*}
for  arbitrarily large parameter $C_0>2\log 2$ (given in the definition of $\tau$ in \eqref{def:hat:jn}). 
\end{lemma}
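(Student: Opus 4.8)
The plan is to run a Lepski-type overshoot argument. Set $V_{j'}:=\hat f_n^{j'}-\E\hat f_n^{j'}$ for the stochastic part of the estimator; since $\bar Z_{j'k}$ is unbiased for $\langle f,\psi_{j'k}\rangle$ and $\{\tilde\psi_{j'k}\}$ is the dual of the Riesz basis $\{\psi_{j'k}\}$, we have $\E\hat f_n^{j'}=P_{\mathcal V_{j'+1}}f$. Abbreviate $\rho_l:=n^{-1}2^{2ls}l(2^{l}+2^{2l}l^{2a}\alpha^{-2})$, so the right-hand side of the selection rule \eqref{def:hat:jn} is $\tau\rho_l$ with $\tau=C_0c_{a,p,s,L,\psi}$. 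First I would reduce the event: since $j>j^{*}_n\ge1$ we have $j-1\in\mathcal J$, so by minimality of $\hat j_n$, $\{\hat j_n=j\}$ is contained in the event that the condition in \eqref{def:hat:jn} fails at level $j-1$, i.e.\ $\|(\hat f_n^{j-1}-\hat f_n^{l})^{(s)}\|_{L^2}^2\vee\max_{t}|(\hat f_n^{j-1}-\hat f_n^{l})^{(s)}(x_t)|^2>\tau\rho_l$ for some $l\in\{j,\dots,j_{\max}\}$, $s\in\{0,\dots,\lfloor p\rfloor\}$. Decomposing $\hat f_n^{j-1}-\hat f_n^{l}=V_{j-1}-V_{l}+(P_{\mathcal V_j}f-P_{\mathcal V_{l+1}}f)$ and using \eqref{eq:def:B} together with the monotonicity in $j$ of both sides of the defining inequality of $j^{*}_n$ in \eqref{def:j*} (so that for $j,l+1>j^{*}_n$ the squared bias $\|(P_{\mathcal V_j}f-P_{\mathcal V_{l+1}}f)^{(s)}\|_{L^\infty}^2\lesssim\rho_l$), this would force, for $\tau$ large, one of $\|V_{j-1}^{(s)}\|_{L^2}^2$, $\max_t|V_{j-1}^{(s)}(x_t)|^2$, $\|V_{l}^{(s)}\|_{L^2}^2$, $\max_t|V_{l}^{(s)}(x_t)|^2$ to exceed $c\tau\rho_l$ for a fixed $c>0$; since $\rho_l\ge\rho_j$ for $l\ge j$, I would test the $V_{j-1}$-terms against $c\tau\rho_j$ only, avoiding a union over $l$ there.

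Next I would establish exponential concentration of $V_{j'}^{(s)}$ for $j'\in\{j-1\}\cup\{j,\dots,j_{\max}\}$. Writing $V_{j'}^{(s)}=\tfrac1n\sum_{i=1}^n\xi_i$ with $\xi_i$ the centered random function obtained from \eqref{eq: plug_in wavelet estimator} by replacing $\bar Z_{j''k}$ with $\psi_{j''k}(X_i)-\E\psi_{j''k}(X_i)+\sigma_{\alpha,j''}Y_{ij''k}$ and differentiating the duals $s$ times, I would use the uniform MRA frame bounds, \eqref{UB:basis:deriv}, \eqref{eq: bound l2 sum adj wav} and reproducing-kernel bounds for $\mathcal V_{j'+1}$ to get (as in \eqref{eq:UB:var}) $\E\|V_{j'}^{(s)}\|_{L^2}^2\lesssim\rho_{j'}/j'$ and $\sup_x\Var(V_{j'}^{(s)}(x))\lesssim\rho_{j'}/j'$, with per-observation sub-exponential scale of $\xi_i$ of order $2^{j'(s+1)}(1+(j')^{a}\alpha^{-1})$. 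For the pointwise terms I would apply Bernstein's inequality to $\tfrac1n\sum_i\xi_i(x_t)$ (as in Proposition~B.1 of \cite{ButuceaPrivacy}): the threshold is a factor $\gtrsim\tau j'$ above the variance, so the exponent is $\gtrsim\tau j'$ and, choosing $c_{a,p,s,L,\psi}$ large, at least $(C_0/2)j'\ge(C_0/2)j$, costing only a factor $M_n+1$ after the union over the grid. For the $L^{2}$ terms I would bound $\|V_{j'}^{(s)}\|_{L^2}^2\lesssim2^{2j's}\|\tfrac1n\sum_iW_i\|_{\ell^2}^2$ with $W_i\in\R^{D}$, $D=\dim\mathcal V_{j'+1}\asymp2^{j'}$, and control $\|\tfrac1n\sum_iW_i\|_{\ell^2}^2$ by Bernstein on a $\tfrac12$-net of $S^{D-1}$ (or a Hanson--Wright inequality); here the exponent is $\gtrsim\tau j'2^{j'}$ in the sub-Gaussian regime --- the $2^{j'}$ coming from $(2^{j'}+2^{2j'}(j')^{2a}\alpha^{-2})/(1+2^{j'}(j')^{2a}\alpha^{-2})=2^{j'}$ --- and $\gtrsim\sqrt{\tau n}$ in the sub-exponential regime, so it beats the covering entropy $\asymp2^{j'}$ with $\gtrsim(C_0/2)j'$ to spare (using $C_0>2\log2$ and $j'\le j_{\max}=(\log n)/3$), and no grid factor appears.

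Finally, summing these four bounds over $l\in\{j,\dots,j_{\max}\}$ and $s\in\{0,\dots,\lfloor p\rfloor\}$ and using $\sum_{l\ge j}e^{-(C_0/2)l}<2e^{-(C_0/2)j}$ (again $C_0>2\log2$) would yield $\mathbb P(\hat j_n=j)\le2^6 j_{\max}(M_n+1)(p+1)e^{-(C_0/2)j}$, the constant $2^6$ absorbing the bounded multiplicities (four pieces per $(l,s)$, the Bernstein prefactor, the geometric series, the reduction constant). The main obstacle is the concentration step: sharp, \emph{simultaneously} $L^2$- and grid-pointwise, exponential concentration of the $s$-th derivative of the privatized estimator around its mean over a non-orthogonal Riesz basis whose dual elements lack compact support. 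This hinges on the uniform MRA frame bounds, the derivative estimates \eqref{UB:basis:deriv}--\eqref{eq: bound l2 sum adj wav}, reproducing-kernel variance bounds, a careful accounting of the Laplace-noise sub-exponential scales across resolution levels, and a calibration of $c_{a,p,s,L,\psi}$ so that the Bernstein exponents dominate both the covering entropy $\asymp2^{j}$ of the $L^2$ estimate and the $(M_n+1)$ grid union bound, while still producing the clean decay $e^{-(C_0/2)j}$ uniformly in $C_0>2\log2$.
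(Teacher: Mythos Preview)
Your outline is correct and the pointwise part matches the paper: reduce $\{\hat j_n=j\}$ to failure of the test at level $j-1$, split into stochastic plus deterministic bias, kill the bias via the defining inequality of $j_n^*$, and apply Bernstein at each grid point to get the $(M_n+1)$ factor, then sum over $s$ and $l$ using $C_0>2\log 2$.

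The genuine difference is in how you handle the $L^2$ deviation. You propose to pass from $\|V_{j'}^{(s)}\|_{L^2}$ to an $\ell^2$ norm of coefficients via the uniform frame bounds and Lemma~\ref{lemma: bound norm deriv splines}, and then control the $\ell^2$ norm by a $\tfrac12$-net on $S^{D-1}$ (or Hanson--Wright), absorbing the covering entropy $\asymp 2^{j'}$ into the Bernstein exponent using $j'\le j_{\max}=(\log n)/3$. The paper instead never leaves the function side: it discretizes the $L^2$ norm on the very same grid $\{x_t\}$, writing $\|g\|_{L^2}^2\le M_n^{-1}\sum_t g(x_t)^2+M_n^{-2}\sup_x|g'(x)|^2$, bounds the first piece by the already-proved pointwise tail (this is where the $(M_n+1)$ factor also enters the $L^2$ bound), and controls the second piece via a Gamma tail bound for the Laplace part and a deterministic sup-norm bound for the empirical part. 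This is exactly where the hypothesis $M_n\gtrsim n^{4/3}$ is used in the lemma: it makes $M_n^{-2}\sup|g^{(s+1)}|^2$ negligible relative to $\rho_l$. Your route avoids that derivative-correction step entirely (and in fact does not need $M_n\gtrsim n^{4/3}$ for the $L^2$ tail), at the cost of having to check that both the sub-Gaussian exponent $\asymp\tau l\,2^{l}$ and the sub-exponential exponent $\gtrsim\sqrt{\tau n}$ dominate the metric entropy $\asymp 2^{l}$ with margin $(C_0/2)l$; this holds by $2^{l}\le n^{1/3}$ and a suitable choice of $c_{a,p,s,L,\psi}$, so your argument closes. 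A minor point: the paper keeps the Laplace and empirical pieces separate ($W_{s,\ell}$ and $V_{s,\ell}$), which makes the sub-exponential bookkeeping cleaner than treating the mixed sum at once, and it works directly with the increment $\sum_{\ell=j}^{l}(\cdot)$ rather than with $V_{j-1}$ and $V_l$ individually; organizationally different, but equivalent in effect.
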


\begin{theorem}\label{thm:lepski}
The estimator $\hat{f}_n$ given in \eqref{def:Lepski} satisfies that for all $s+1\leq p$, $p\in\mathbb{N}$,
\begin{align*}
\sup_{f\in\mathcal{W}_{p}}\, &\left[\E_f\norm{(\hat{f}_n- f)^{(s)}}_{L^2}^2\vee \sup_{x\in[0,1]} \E_f\left|(\hat{f}_n-f)^{(s)}(x)\right|^2\right]\\
& \lesssim (n\alpha^2\log^{-(1+2a)} n)^{-\frac{2(p-s)}{2p+2}}\vee (n/\log n)^{-\frac{2(p-s)}{2p+1}},
\end{align*}
up to a constant that depends on $a$, $d$ and $M$ only.
\end{theorem}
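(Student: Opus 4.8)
The plan is to perform a standard bias-variance decomposition over the random event selecting $\hat{j}_n$, split according to whether $\hat{j}_n \le j_n^*$ or $\hat{j}_n > j_n^*$, using Lemma~\ref{lem:overshoot} to control the overshoot probability and the Lepski stopping rule itself to control the undershoot case. Fix $s$ with $s+1 \le p$ and a point $x\in[0,1]$ (or the $L^2$ norm; both are handled in parallel since the stopping rule in \eqref{def:hat:jn} controls both). Write $(\hat{f}_n - f)^{(s)} = (\hat{f}_n^{\hat{j}_n} - f)^{(s)}$ and, crucially, insert the oracle level: $(\hat{f}_n^{\hat{j}_n} - f)^{(s)} = (\hat{f}_n^{\hat{j}_n} - \hat{f}_n^{j_n^* \vee \hat{j}_n})^{(s)} + (\hat{f}_n^{j_n^* \vee \hat{j}_n} - f)^{(s)}$. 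The second term, when $\hat{j}_n \le j_n^*$, is just $(\hat{f}_n^{j_n^*} - f)^{(s)}$, which by the non-adaptive bias bound \eqref{eq:def:B} and variance bound \eqref{eq:UB:var} together with the oracle calibration \eqref{ub:var:jn*} is of the claimed order (here one needs to be slightly careful to go from $L^2$-in-expectation to the pointwise statement at arbitrary $x$ — this is exactly the role of the fine grid $x_t$ with $M_n \gtrsim n^{4/3}$, which lets one replace the arbitrary $x$ by a nearby grid point at negligible cost using \eqref{UB:basis:deriv} to bound the Lipschitz modulus of the estimator's derivative). The first term, when $\hat{j}_n \le j_n^*$, is $(\hat{f}_n^{\hat{j}_n} - \hat{f}_n^{j_n^*})^{(s)}$ with $j_n^* > \hat{j}_n$, hence by the very definition of $\hat{j}_n$ in \eqref{def:hat:jn} (take $l = j_n^* \in \mathcal{J}$, valid since $j_n^* \le j_{\max}$) it is bounded by $\tau\, n^{-1} 2^{2 j_n^* s} j_n^* (2^{j_n^*} + 2^{2 j_n^*} (j_n^*)^{2a}\alpha^{-2})$, which is again of the claimed order by \eqref{ub:var:jn*}.

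On the complementary event $\{\hat{j}_n > j_n^*\}$ one bounds the error crudely and pays for it with the exponentially small probability from Lemma~\ref{lem:overshoot}. Write the total expected squared error on this event as $\sum_{j > j_n^*} \E[\,\|(\hat{f}_n^j - f)^{(s)}\|^2 \mathbf{1}_{\hat{j}_n = j}\,]$. For each such $j$ one again splits $(\hat{f}_n^j - f)^{(s)} = (\hat{f}_n^j - \hat{f}_n^{j_n^*})^{(s)} + (\hat{f}_n^{j_n^*} - f)^{(s)}$; the second piece is deterministically $O$ of the target rate as above, while the first piece — which has no stopping-rule control now since $j > j_n^*$ — is bounded using \eqref{UB:basis:deriv} and the worst-case coefficient bounds: the $L^2$ (resp.\ pointwise) norm of $(\hat{f}_n^j)^{(s)}$ is at most polynomial in $2^j$, and since $j \le j_{\max} = (\log n)/3$ we have $2^j \le n^{1/3}$, so the first piece is at most a fixed power of $n$. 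Multiplying by $\mathbb{P}(\hat{j}_n = j) \le 2^6 j_{\max}(M_n+1)(p+1) e^{-(C_0/2) j}$ and summing over $j > j_n^*$: since $C_0$ is an arbitrarily large free parameter (hidden in $\tau$), choose it large enough that $e^{-(C_0/2) j_n^*}$ beats any fixed power of $n$ and of $M_n \asymp n^{4/3}$; the geometric sum over $j$ then converges and the whole overshoot contribution is $o(n^{-A})$ for any $A$, in particular negligible against the target rate. This is where the freedom in $C_0$ in Lemma~\ref{lem:overshoot} is used.

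The main obstacle is the pointwise-at-arbitrary-$x$ part, for two reasons. First, the Lepski selector $\hat{j}_n$ is defined through a maximum over the grid $x_t$, not over all $x\in[0,1]$, so to transfer control to a generic $x$ one must bound $|(\hat{f}_n^j - \hat{f}_n^l)^{(s)}(x) - (\hat{f}_n^j - \hat{f}_n^l)^{(s)}(x_t)|$ for the nearest grid point $x_t$; this uses the derivative bound \eqref{UB:basis:deriv} (one extra derivative costs a factor $2^{j(2s+3)/2}$, i.e.\ at most $n^{(2s+3)/6}$), so the grid spacing $1/M_n \lesssim n^{-4/3}$ must be fine enough to absorb this against $n^{-1}$-order quantities — hence the precise exponent $4/3$. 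Second, and more delicate, establishing the pointwise analogue of \eqref{eq:UB:var} for the deviation of $(\hat{f}_n^{j_n^*})^{(s)}$ from its mean at a fixed point requires a Bernstein-type argument for the sum $\frac1n\sum_i \sum_{j,k} W_{ijk}\tilde\psi_{jk}^{(s)}(x)$ plus the centered data part, using that at level $j$ only $O(d)$ wavelets are nonzero at $x$ (so the per-level contribution is controlled) and that the Laplace variances $\sigma_{\alpha,j}^2 \asymp j^{2a} 2^j \alpha^{-2}$ accumulate geometrically; this is the step the authors flag as analogous to Proposition~B.1 of \citet{ButuceaPrivacy} but complicated by the Riesz-basis (non-orthogonal) structure, which forces one to pass through the dual frame $\tilde\psi_{jk} = S_j^{-1}\psi_{jk}$ and use the uniform frame bounds $A_d, B_d$ to keep all constants $n$-independent. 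Everything else is bookkeeping: collecting the $\vee$ of the two regimes in \eqref{ub:var:jn*}, noting the extra $j_n^*$ (resp.\ $\log n$) multiplier is already built into the oracle definition \eqref{def:j*}, and checking the constant depends only on $a, d, M$.
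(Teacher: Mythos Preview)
The proposal is correct and follows essentially the same route as the paper: split on $\{\hat{j}_n\le j_n^*\}$ versus $\{\hat{j}_n>j_n^*\}$, handle the undershoot via the Lepski selector definition plus the oracle bias--variance bound \eqref{ub:var:jn*}, handle the overshoot by a crude polynomial bound on the estimator times the exponentially small probability from Lemma~\ref{lem:overshoot}, and transfer the pointwise control from the grid $\{x_t\}$ to arbitrary $x$ via the derivative bound \eqref{UB:basis:deriv} and the choice $M_n\gtrsim n^{4/3}$. One small remark: the pointwise variance of $(\hat{f}_n^{j_n^*})^{(s)}$ that you flag as ``more delicate'' does not require a Bernstein argument here---it is just the second-moment bound \eqref{variance stim sp pointwise} from the non-adaptive analysis; the Bernstein step lives entirely inside the proof of Lemma~\ref{lem:overshoot}, which you may take as given.
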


This is now easily turned into a tight upper bound on the risk of the plug-in estimator in the case of an atomic functional $\Lambda$.

\begin{theorem}\label{th: adaptive rate semiparam splwav}
Let $f_0\in \mathcal{W}_{p}$, $p\in\mathbb{N}^*$, be such that $\|f_0\|_{L^\infty}\leq L$ and suppose that the operator $\Lambda$ satisfies Assumption~\ref{assum: no singular continuous part} for $m,s\geq 0$ and $p\geq \max(s+1,m+2,2m-s)$, where $T_f(h)=\sum_{j=1}^s \int h^{(j)}d\mu_j$, $\mu_s$ with discrete component. Then the plug in estimator $\Lambda(\hat{f}_n)$ with $\hat{f}_n$ given in \eqref{def:Lepski} satisfies that
\begin{align*}
\E_f|\Lambda(\hat{f}_n)- \Lambda(f_0)|\lesssim (n\alpha^2\log^{-(1+2a)} n)^{-\frac{p-s}{2p+2}}\vee (n/\log n)^{-\frac{p-s}{2p+1}},
\end{align*}
up to a constant that depends on $a$, $d$ and $M$ only.
\end{theorem}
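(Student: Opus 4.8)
The plan is to reduce the estimation error of the plug-in rule $\Lambda(\hat f_n)$ to the (derivative) risk of the adaptive density estimator $\hat f_n$ from \eqref{def:Lepski}, and then invoke Theorem~\ref{thm:lepski}. Starting from the differentiability expansion \eqref{eq: form functional} with $h=\hat f_n-f_0$, write
\[
\Lambda(\hat f_n)-\Lambda(f_0)=T_{f_0}(\hat f_n-f_0)+O\!\left(\norm{\hat f_n-f_0}_{(2,m,\lambda)}^2\right).
\]
Since $\Lambda$ is atomic of index $s$, we have $T_{f_0}(h)=\sum_{j=1}^{s}\int h^{(j)}\,d\mu_j$ with the $\mu_j$ finite signed measures whose absolutely continuous densities are bounded and whose discrete parts have bounded mass (Assumption~\ref{assum: no singular continuous part}). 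Hence the linear term is controlled by
\[
|T_{f_0}(\hat f_n-f_0)|\le \sum_{j=1}^{s}\Big(\norm{(\hat f_n-f_0)^{(j)}}_{L^1(\mu_{j,\mathrm{cont}})}+\norm{(\hat f_n-f_0)^{(j)}}_{L^\infty}\,|\mu_{j,\mathrm{pp}}|([0,1])\Big),
\]
and each summand is bounded, up to a constant depending on $M$, by $\norm{(\hat f_n-f_0)^{(j)}}_{L^\infty}$ for the discrete part and by $\norm{(\hat f_n-f_0)^{(j)}}_{L^2}$ for the continuous part (Cauchy--Schwarz against the bounded density). This is exactly why Theorem~\ref{thm:lepski} was phrased to control \emph{both} the pointwise/sup-type risk \emph{and} the $L^2$ risk of all derivatives up to order $s$ (note $s\le p-1$ so $s\in\{0,\dots,\lfloor p\rfloor\}$ is admissible there).

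The key technical point is handling the discrete component $\delta_{s,f_0}$ of $\mu_s$, which forces control of $(\hat f_n-f_0)^{(s)}$ in sup norm rather than merely in $L^2$. A naive sup-norm bound would cost an extra $\log n$ factor, but Theorem~\ref{thm:lepski} instead delivers, for every fixed $x\in[0,1]$, $\E_f|(\hat f_n-f_0)^{(s)}(x)|^2\lesssim r_n^2$ with $r_n=(n\alpha^2\log^{-(1+2a)}n)^{-(p-s)/(2p+2)}\vee (n/\log n)^{-(p-s)/(2p+1)}$. Since the atoms of $\mu_s$ form a fixed countable set with summable mass, applying this bound at each atom and using the triangle inequality in $L^2(\P_f)$ together with the bounded total mass of $|\mu_s|$ gives $\E_f|\int (\hat f_n-f_0)^{(s)}\,d\,\delta_{s,f_0}|\lesssim r_n$. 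Combining with the continuous-part estimates, $\E_f|T_{f_0}(\hat f_n-f_0)|\lesssim r_n$, which is precisely the claimed rate.

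For the remainder term $O(\norm{\hat f_n-f_0}_{(2,m,\lambda)}^2)$, first note that the expansion \eqref{eq: form functional} is only valid once $\norm{\hat f_n-f_0}_{(\infty,m)}$ is small; I would split the expectation on the event $\{\norm{\hat f_n-f_0}_{(\infty,m)}\le \epsilon_0\}$ and its complement, bounding $|\Lambda(\hat f_n)-\Lambda(f_0)|$ by $2\sup_{\mathcal W_p}|\Lambda|<\infty$ (finite by \eqref{eq: bound deriv func}) on the bad event and using a Markov/Chebyshev bound with the $L^2$ derivative control of Theorem~\ref{thm:lepski} — valid for orders up to $s+1\le p$, and $m\le p-2<p-1$ so $m+1\le p-1$ is covered — to show this probability is negligible (faster than $r_n$). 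On the good event, $\norm{\hat f_n-f_0}_{(2,m,\lambda)}^2\lesssim \sum_{j=0}^m \norm{(\hat f_n-f_0)^{(j)}}_{L^2}^2$ up to the bounded masses/densities of $\lambda_j$, and by Jensen and Theorem~\ref{thm:lepski} its expectation is $\lesssim \max_{j\le m}\big[(n\alpha^2\log^{-(1+2a)}n)^{-(p-j)/(p+1)}\vee (n/\log n)^{-(p-j)/(2p+1)}\big]$, which is $o(r_n)$ exactly under the hypothesis $p\ge 2m-s$ (this inequality is what makes the quadratic remainder in the $m$-norm dominated by the linear term at order $s$ — the same bookkeeping used in Theorems~\ref{th: rate atomic} and \ref{th: atomic wav}). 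The main obstacle is this last verification: carefully matching the derivative orders appearing in the remainder norm $\norm{\cdot}_{(2,m,\lambda)}$ against the rate index $(p-s)$ of the leading term, and checking that the smallness-of-perturbation caveat in \eqref{eq: form functional} does not degrade the rate; once the arithmetic $p-j\ge p-m$ and $2(p-m)\ge p-s$ is in place, everything reduces to Theorem~\ref{thm:lepski}.
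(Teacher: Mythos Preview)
Your overall strategy matches the paper's: invoke the expansion \eqref{eq: form functional}, bound the linear term $T_{f_0}(\hat f_n-f_0)$ by splitting each $\mu_j$ into its continuous and discrete parts and feeding in the $L^2$ and pointwise bounds of Theorem~\ref{thm:lepski}, then bound the quadratic remainder using $p\ge 2m-s$. That part is fine, and the paper does essentially the same thing (via the decomposition \eqref{eq: Taylor expansion Expectation}).

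The gap is in your treatment of the bad event $\{\norm{\hat f_n-f_0}_{(\infty,m)}>\delta\}$. Two issues:
\begin{enumerate}
\item You write $|\Lambda(\hat f_n)-\Lambda(f_0)|\le 2\sup_{\mathcal W_p}|\Lambda|<\infty$, citing \eqref{eq: bound deriv func}. But \eqref{eq: bound deriv func} bounds $\norm{T_f}_p$, not $\Lambda$ itself, and more importantly $\hat f_n\notin\mathcal W_p$: it is a spline perturbed by Laplace noise, not a density, and for several of the functionals in Section~\ref{sec:Examples} (e.g.\ $\int (f')^2/f$) the value $\Lambda(\hat f_n)$ can be arbitrarily large. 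The paper does not use a uniform bound on $|\Lambda|$; instead it uses \eqref{eq: Taylor ineq Frechet deriv} to get $|\Lambda(\hat f_n)-\Lambda(f_0)|\le \sup_u\norm{T_u}_p\,\norm{\hat f_n-f_0}_{(\infty,p)}$, which is only polynomial in $n$, and then Cauchy--Schwarz as in \eqref{eq: Taylor expansion Expectation}.
\item Your Markov/Chebyshev argument for $\P(\norm{\hat f_n-f_0}_{(\infty,m)}>\delta)$ based on the risk bounds of Theorem~\ref{thm:lepski} gives at best a rate of order $r_{m+1,n}^2$, and under the minimal hypothesis $p=2m-s$ (or $p=s+1$) this is \emph{not} $o(r_n)$. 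Combined with a bound on $|\Lambda(\hat f_n)-\Lambda(f_0)|$ that is only polynomial in $n$, the product does not vanish at the target rate.
\end{enumerate}
The paper resolves this by invoking Lemma~\ref{lem: exponential decay outer taylor exp} for the adaptive estimator, whose proof combines Lemma~\ref{lem:overshoot} (so that $\P(\hat j_n>j_n^*)=o(n^{-c})$ for any $c$) with McDiarmid and Bernstein concentration for $\hat f_n^{j_n^*}$ on $\{\hat j_n\le j_n^*\}$, yielding $\sqrt{\E\norm{\hat f_n-f_0}_{(\infty,p)}^2}\cdot\P(\norm{\hat f_n-f_0}_{(\infty,m)}>\delta)^{1/2}=o(n^{-c})$. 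That machinery is what you are missing; once it is in place, the remainder of your argument goes through exactly as the paper's does.
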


\begin{proof}
From \eqref{eq: Taylor expansion Expectation} and Lemma \ref{lem: exponential decay outer taylor exp}, for any $c>0$
\[\E_f|\Lambda(\hat{f}_n)- \Lambda(f_0)|\leq C\left(\mathbb{E}\left[|T_f(f-\hat{f})|\right]+ \mathbb{E}\left[\norm{\hat{f}_n-f}_{(2,m,\lambda)}^2\right]\right)+o(n^{-c}).\]
Then, as in the proof of Theorem \ref{th: rate atomic},
\[\mathbb{E}\left[|T_f(f-\hat{f})|\right]\leq C_{a,d,m} \sum_{j\leq s} \left[\E_f\norm{(\hat{f}_n- f)^{(s)}}_{L^2}\vee \sup_{x\in[0,1]} \E_f\left|(\hat{f}_n-f)^{(s)}(x)\right|\right],\]
which is bounded by $(n\alpha^2\log^{-(1+2a)} n)^{-\frac{p-s}{2p+2}}\vee (n/\log n)^{-\frac{p-s}{2p+1}}$ according to Theorem \ref{thm:lepski}, since $p\geq s+1$.
\begin{align*}
\E \Big| \int_0^1  (\hat{f}_n-f_0)^{(j)}d\mu_j\Big|^2\leq |\mu_j|([0,1])\Big(\int_0^1 \E | (\hat{f}_n-f_0)^{(j)}|^2d\mu_{j,\text{cont}}+ \int_0^1 \E | (\hat{f}_n-f_0)^{(j)}|^2d\mu_{j,\text{pp}}\Big).
\end{align*}
Similarly, 
\[\mathbb{E}\left[\norm{\hat{f}_n-f}_{(2,m,\lambda)}^2\right]\leq C_{a,d,M}\left[(n\alpha^2\log^{-(1+2a)} n)^{-\frac{2(p-m)}{2p+2}}\vee (n/\log n)^{-\frac{2(p-m)}{2p+1}}\right],\]
which is smaller than $(n\alpha^2\log^{-(1+2a)} n)^{-\frac{p-s}{2p+2}}\vee (n/\log n)^{-\frac{p-s}{2p+1}}$ since $p\geq 2m-s$. This concludes the proof.
\end{proof}

\section{Examples}
\label{sec:Examples}

\subsection{$\Lambda(f)=f^{(r)}(x_0)$}

The point evaluation functional is a differentiable functional of order $m=0$ and index $s=r$ since
\[\left(f+h\right)^{(r)}(x_0) = f^{(r)}(x_0) + \int h^{(r)}\delta_{x_0}.\]
Whenever $f\in\mathcal{W}_{p}$ for $p\leq d+1$, the plug-in estimators from Theorem \ref{th: rate atomic} and Theorem \ref{th: atomic wav}, with $2^{j_n} = \lfloor(n\alpha^2)^{\frac{1}{2p+2}}\wedge n^{\frac{1}{2p+1}}\rfloor$ and $2^{j_n} = \lfloor(n\alpha^2\log^{-2a} n)^{\frac{1}{2p+2}}\wedge n^{\frac{1}{2p+1}}\rfloor$, respectively, converge at the rates $(n\alpha^2)^{-\frac{(p-r)}{2p+2}}\vee n^{-\frac{(p-r)}{2p+1}}$ and $(n\alpha^2 \log^{-2a} n)^{-\frac{(p-r)}{2p+2}}\vee n^{-\frac{(p-r)}{2p+1}}$ if $\max(2,r+1)\leq p\leq d+1$. We note that the same rate is achieved for $\Lambda(f)= g(f^{(r)}(x_0))$, with $g$ twice differentiable and $g'\neq 0$ according to Proposition 3.9 in \cite{GoldsteinMesser92}.

\subsection{$\Lambda(f)=\int_0^1|f^{(m)}|^qd\lambda$}

According to Proposition 3.7 in \cite{GoldsteinMesser92}, the $q^{\text{th}}$ power of the $L_q$-norm is a differentiable functional of order $m$ for $q\geq 2$ on $\mathcal{W}_{p}$, $p\geq m$ (see also Example 2 from this reference). For instance, if $q$ is even, we find that
\[\Lambda(f+h) = \Lambda(f) +q\int_0^1 h^{(m)}(f^{(m)})^{q-1}d\lambda + \sum_{l=2}^{q} \binom{q}{l}\int_0^1(h^{(m)})^l(f^{(m)})^{q-l}d\lambda.\]
The last sum is $O(\norm{h}^2_{(2,m,\lambda)})$ for $\norm{h}_{(\infty,m)}$ small enough, where $\lambda$ is the Lebesgue measure. Therefore, for $m=0$, we see that this functional is smooth and the derivative satisfies $\omega_f = qf^{q-1}$. We apply Theorem \ref{th: rate smooth} and obtain an (undersmoothed) plug-in estimator which converges at rate $n^{-1/2}\vee(n\alpha^2)^{-1/2}$ whenever $p> 2$. Under this last condition, we indeed always have $\omega_f\in\mathcal{B}_{(\infty,1)}(M^\prime)$ for some $M^\prime$ depending on $M$ and $q$.

For $q=2$ and $0<m<p$, we integrate $T_f(h)$ by part to see that this is an atomic functional of index $s=m-1$ as 
\begin{align*}
T_f(h) &= 2\int_0^1 h^{(m)}f^{(m)}d\lambda \\
&=  2h^{(m-1)}(1)f^{(m)}(1) - 2 h^{(m-1)}(0)f^{(m)}(0) - \int_0^1 h^{(m-1)}f^{(m+1)}d\lambda.
\end{align*}
From Theorem \ref{th: rate atomic} and Theorem \ref{th: atomic wav}, plug-in estimators with $2^{j_n} = \lfloor(n\alpha^2)^{\frac{1}{2p+2}}\wedge n^{\frac{1}{2p+1}}\rfloor$ and $2^{j_n} = \lfloor(n\alpha^2\log^{-2a} n)^{\frac{1}{2p+2}}\wedge n^{\frac{1}{2p+1}}\rfloor$, respectively, converge at the rates $(n\alpha^2)^{-\frac{(p-m+1)}{2p+2}}\vee n^{-\frac{(p-m+1)}{2p+1}}$ and $(n\alpha^2 \log^{-2a} n)^{-\frac{(p-m+1)}{2p+2}}\vee n^{-\frac{(p-m+1)}{2p+1}}$ if $m+2\leq p\leq d+1$ and $f\in\mathcal{W}_{p}$.

\subsection{$\Lambda(f)=2\int_0^1fg/(f+g)d\lambda$}
We assume here that $g$ is a density satisfying $\inf_{0\leq x\leq 1} g(x)>0$ and $\norm{g}_{(\infty,1)}<1$. We can check that for $h$ such that $\norm{h}_{(\infty, 0)}$ is small enough
\begin{align*}
\Lambda(f+h) &=2\int_0^1\frac{fg}{f+h+g}d\lambda + 2\int_0^1\frac{hg}{f+h+g}d\lambda\\
&= \Lambda(f) - 2\int_0^1\frac{hfg}{(f+g)^2}d\lambda + 2\int_0^1\frac{h^2fg}{(f+g)^2(f+g+h)} d\lambda+ 2\int_0^1\frac{hg}{f+g+h}d\lambda\\
&= \Lambda(f) + 2\int_0^1\frac{hg^2}{(f+g)^2}d\lambda + 2\int_0^1\frac{h^2fg}{(f+g)^2(f+g+h)}d\lambda - 2\int_0^1\frac{h^2g}{(f+g)(f+g+h)}d\lambda\\
&= \Lambda(f) + 2\int_0^1\frac{hg^2}{(f+g)^2}d\lambda +O\left(\norm{h}^2_{(2,0,\lambda)}\right),
\end{align*}
for $\lambda$ the Lebesgue measures. Therefore, $\Lambda$ is smooth of order $m=0$ with $T_f(h)=\int h\omega_f$ and $\omega_f=2 \frac{g^2}{(f+g)^2}\in\mathcal{B}_{(\infty,1)}(M^\prime)$, for some $M^\prime$ depending on $M$, $\norm{g}_{(\infty,1)}$ and $\inf_{0\leq x\leq 1} g(x)$. The (undersmoothed) plug-in estimator from Theorem \ref{th: rate smooth} then achieves the rate $n^{-1/2}\vee(n\alpha^2)^{-1/2}$ as soon as $f\in\mathcal{W}_{p}$ with $p> 2$.

\subsection{$\Lambda(f)=\int_0^1f\log fd\lambda$}

We assume here that $\inf_{0\leq x\leq 1} f(x)>0$, so that for $\norm{h}_{(\infty, 0)}$ small enough,
\begin{align*}
\Lambda(f+h) &=\int_0^1f\log(f+h)d\lambda + \int_0^1h\log(f+h)d\lambda\\
&= \Lambda(f) + \int_0^1f\log(1+h/f)d\lambda + \int_0^1h\log(f)d\lambda + \int_0^1h\log(1+h/f)d\lambda\\
&= \Lambda(f) + \int_0^1h\left(1+\log(f)\right)d\lambda + \int_0^1f\left(\log\left(1+\frac{h}{f}\right)-\frac{h}{f}\right)d\lambda  + \int_0^1h\log\left(1+\frac{h}{f}\right)d\lambda\\
&=\Lambda(f) + T_f(h) + O\left(\norm{h}^2_{(2,0,\lambda)}\right),
\end{align*}
where $T_f(h) = \int_0^1h\left(1+\log(f)\right)$. Therefore, $\Lambda$ is smooth of order $m=0$. The (undersmoothed) plug-in estimator from Theorem \ref{th: rate smooth} then achieves the rate $n^{-1/2}\vee(n\alpha^2)^{-1/2}$ as soon as $f\in\left\{g\in\mathcal{W}_{p}\colon \inf_{0\leq x\leq 1} g(x)>0 \right\}$ with $p> 2$. Indeed, $1+\log(f) \in\mathcal{B}_{(\infty,1)}(M^\prime)$ for some $M^\prime$ depending on $M$ and $\inf_{0\leq x\leq 1} f(x)$.

\subsection{$\Lambda(f)=\int_0^1(f')^2/fd\lambda$}

We may rewrite, if $\inf_{0\leq x\leq 1} f(x)>0$,

\begin{align*}
    \Lambda(f+h) &= \int_0^1\frac{(f' + h')^2}{f+h}d\lambda\\
    &= \Lambda(f) -  \int_0^1\frac{h(f')^2}{f(f+h)}d\lambda + 2\int_0^1 \frac{f'h'}{f}d\lambda + \int_0^1 \frac{h'^2}{f}d\lambda\\
    &\quad -\int_0^1 \frac{2hh'f'}{f(f+h)}d\lambda-\int_0^1 \frac{hh'^2}{f(f+h)}d\lambda\\
    &= \Lambda(f) - \int_0^1\frac{h(f')^2}{f^2}d\lambda + 2\int_0^1 \frac{f'h'}{f}d\lambda + O\left(\norm{h}^2_{(2,1,\lambda)}\right)\\
    &= \Lambda(f) + T_f(h) + O\left(\norm{h}^2_{(2,1,\lambda)}\right), 
\end{align*}
where the remainder is valid for $\norm{h}_{(\infty, 1)}$ small enough, and where $T_f(h) = - \int_0^1\frac{h(f')^2}{f^2}d\lambda + 2\int_0^1 \frac{f'h'}{f}d\lambda$. In addition,
\[\int_0^1 \frac{f'h'}{f}d\lambda = h(1) \frac{f'(1)}{f(1)}  - h(0) \frac{f'(0)}{f(0)} - \int_0^1 h \frac{f''f-(f')^2}{f^2}d\lambda,\]
so that
\[T_f(h) = 2h(1) \frac{f'(1)}{f(1)}  - 2h(0) \frac{f'(0)}{f(0)} + \int_0^1\frac{h(f')^2}{f^2} - 2\int_0^1h\frac{f''}{f}.\]
We then see, that $\Lambda$ is an atomic functional of order $m=1$ and index $s=0$. From Theorem \ref{th: rate atomic} and Theorem \ref{th: atomic wav}, plug-in estimators with $2^{j_n} = \lfloor(n\alpha^2)^{\frac{1}{2p+2}}\wedge n^{\frac{1}{2p+1}}\rfloor$ and $2^{j_n} = \lfloor(n\alpha^2\log^{-2a} n)^{\frac{1}{2p+2}}\wedge n^{\frac{1}{2p+1}}\rfloor$, respectively, converge at the rates $(n\alpha^2)^{-\frac{p}{2p+2}}\vee n^{-\frac{p}{2p+1}}$ and $(n\alpha^2 \log^{-2a} n)^{-\frac{p}{2p+2}}\vee n^{-\frac{p}{2p+1}}$ if $3\leq p\leq d+1$ and $f\in\mathcal{W}_{p}$ such that $\inf_{0\leq x\leq 1} f(x)>0$.


%
\begin{appendix}
\section{Auxiliary results and proofs of Section~\ref{sec:LowerBounds} on lower bounds}
\label{app:LB}

\begin{lemma}\label{lemma:W0}
    Let $\mathcal W^\circ := \{f\in\mathcal W_p : f>0\}$ and $\Lambda:\mathcal W_p \to \mathbb R$ be differentiable as in \eqref{eq: form functional} and atomic of index $s$, $0\le s\le p$. Then $(\mathcal W^\circ,\Lambda)$ is non-degenerate as in Definition~\ref{def:ND}.
\end{lemma}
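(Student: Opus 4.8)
The plan is to verify Definition~\ref{def:ND} directly by exhibiting, for each $x_0\in[0,1]$, a single perturbation profile $\psi$ (depending only on $x_0$, not on $f_0$) together with a threshold $\delta_0$, and then checking the four defining requirements of $\mathcal{W}^{\circ}$ (namely $C^p$--regularity, unit integral, strict positivity, and the $(\infty,p)$--norm bound) by elementary estimates. The only real design constraint is that $\psi$ should have vanishing integral over \emph{each} half-line, so that the boundary cases $x_0\in\{0,1\}$ — where the rescaled bump unavoidably protrudes from $[0,1]$ — can be handled simultaneously with the interior ones.

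\textbf{Construction of $\psi$.} Fix $x_0\in[0,1]$. Let $\phi\in C_c^{\infty}(\R)$ be a bump function with $\supp\phi\subseteq(-1/2,1/2)$ and $\phi(0)>0$, and set $\psi_1(u):=u^{s}\phi(u)$. By the Leibniz rule $\psi_1^{(s)}(0)=s!\,\phi(0)\neq0$ (every other term contains a derivative of $u\mapsto u^{s}$ of order strictly less than $s$, which vanishes at $0$), while $\supp\psi_1\subseteq(-1/2,1/2)$. Put $c_{+}:=\int_0^{\infty}\psi_1$ and $c_{-}:=\int_{-\infty}^{0}\psi_1$, pick $\phi_{+}\in C_c^{\infty}((1/2,1))$ and $\phi_{-}\in C_c^{\infty}((-1,-1/2))$ with $\int\phi_{\pm}=1$, and define
\[
\psi:=\psi_1-c_{+}\phi_{+}-c_{-}\phi_{-}\in C_c^{\infty}((-1,1))\subseteq C^{p}(\R).
\]
Since $\phi_{\pm}$ and all their derivatives vanish in a neighbourhood of $0$, we still have $\psi^{(s)}(0)=\psi_1^{(s)}(0)\neq0$, whereas by construction $\int_0^{\infty}\psi=\int_{-\infty}^{0}\psi=0$, hence also $\int_{\R}\psi=0$. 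Finally set $\delta_0:=1$ if $x_0\in\{0,1\}$ and $\delta_0:=\min(x_0,1-x_0)$ if $x_0\in(0,1)$; in all cases $\delta_0\le1$, and whenever $\delta<\delta_0$ the part of $\psi$ seen through $x\mapsto(x-x_0)/\delta$ on $[0,1]$ has zero integral (for interior $x_0$ the whole support lands inside $[0,1]$, and for $x_0\in\{0,1\}$ precisely the relevant half-line is seen).

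\textbf{Verification.} Fix $f_0\in\mathcal{W}^{\circ}$. As $f_0$ is continuous and strictly positive on the compact $[0,1]$, $c_{f_0}:=\min_{[0,1]}f_0>0$, and as $f_0\in\mathcal{B}_{(\infty,p)}(M)$ we have $M-\norm{f_0}_{(\infty,p)}>0$. Set
\[
\eta_0:=\min\!\left(\frac{c_{f_0}}{1+\norm{\psi}_{L^{\infty}}},\ \frac{M-\norm{f_0}_{(\infty,p)}}{1+\sum_{j=0}^{p}\norm{\psi^{(j)}}_{L^{\infty}}}\right)>0 .
\]
Fix $\eta\in(0,\eta_0)$, $\delta\in(0,\delta_0)$, and write $g(x):=\eta\delta^{p}\psi((x-x_0)/\delta)$, so $f_1=f_0+g$. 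Then $f_1\in C^{p}[0,1]$ trivially; the substitution $u=(x-x_0)/\delta$ together with the vanishing-integral property of $\psi$ and the choice of $\delta_0$ gives $\int_0^1 g=0$, so $\int_0^1 f_1=1$; using $\delta^{p}\le\delta_0^{p}\le1$ we get $f_1\ge c_{f_0}-\eta\norm{\psi}_{L^{\infty}}>0$ on $[0,1]$; and since $g^{(j)}(x)=\eta\delta^{p-j}\psi^{(j)}((x-x_0)/\delta)$ with $\delta^{p-j}\le1$ for $0\le j\le p$, we obtain $\norm{f_1}_{(\infty,p)}\le\norm{f_0}_{(\infty,p)}+\eta\sum_{j=0}^{p}\norm{\psi^{(j)}}_{L^{\infty}}<M$. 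Hence $f_1\in\mathcal{W}^{\circ}$, which is exactly the conclusion of Definition~\ref{def:ND}.

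The atomic structure of $\Lambda$ enters only through the value of the index $s$, which fixes the order of the derivative at which $\psi$ must be nonzero at $0$; no further property of the functional is used. The point requiring attention is the quantifier order in Definition~\ref{def:ND}: $\psi$ and $\delta_0$ are chosen first, depending on $x_0$ alone, and only then $\eta_0$ is allowed to depend on $f_0$ — which is exactly how the construction is organised. I expect the only mildly non-routine step to be arranging $\psi^{(s)}(0)\neq0$ together with $\int_0^{\infty}\psi=\int_{-\infty}^{0}\psi=0$ so that the boundary points $x_0\in\{0,1\}$ are covered by the same $\psi$; everything else is a direct estimate.
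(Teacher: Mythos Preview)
Your proof is correct and follows the same overall strategy as the paper's: construct a compactly supported smooth $\psi$ with $\psi^{(s)}(0)\neq 0$ whose integral over the portion of its support visible on $[0,1]$ vanishes, then verify the four defining properties of $\mathcal W^\circ$ for $f_1=f_0+\eta\delta^p\psi((\cdot-x_0)/\delta)$ by direct estimates.

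The constructions differ in a minor but pleasant way. The paper writes $\psi$ as a weighted combination $-g(\cdot+\tfrac12)+b\,g(\cdot-\tfrac12)$ of two shifted copies of a fixed bump $g$, with the weight $b$ chosen case-by-case according to whether $x_0\in(0,1)$, $x_0=0$, or $x_0=1$; this entails a separate computation of $\int_0^1\psi((x-x_0)/\delta)\,dx$ in each of the three cases. You instead build a single $\psi$ (depending only on $s$, not on $x_0$) satisfying both $\int_0^\infty\psi=0$ and $\int_{-\infty}^0\psi=0$, by starting from $\psi_1(u)=u^s\phi(u)$ and subtracting correction bumps supported on each side of the origin; this covers all $x_0$ at once and avoids the case split. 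Your Leibniz-rule verification that $\psi_1^{(s)}(0)=s!\,\phi(0)\neq0$ is also slightly more direct than the paper's appeal to the existence of a point $x^*$ with $g^{(s)}(x^*)\neq0$ followed by a shift. Both routes are elementary and comparable in length; yours is marginally tidier.
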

\begin{proof}
    Define $g(x):= \exp\left(-\frac{1}{1-4x^2} \right)$,  if $x\in(-\frac12,\frac12)$ and $g(x) := 0$, else, which is infinitely many times differentiable on $\R$. Notice that there must be an $x^*\in(-\frac12,\frac12)$, such that $g^{(s)}(x^*)\neq 0$, or otherwise $g$ is a polynomial on that interval. By symmetry we may assume that $x^*\in(-\frac12,0]$. For $b>0$, set $\varphi(x) := -g(x+\frac12) + bg(x-\frac12)$, which has compact support $[-1,1]$. If $x_0\in(0,1)$, we take $b=1$. If $x_0=0$, we take $b=(\int_{x^*}^{1/2}g(u)du )/(\int_{-1/2}^{1/2} g(u)du )$. Finally, if $x_0=1$, we set $b=(\int_{-1/2}^{1/2} g(u)du )/(\int_{-1/2}^{-x^*}g(u)du )$. Furthermore, if $x_0\in[0,1)$, define $\psi(x) := \varphi(x+x^*-\frac12)$ and if $x_0=1$ set it equal to $\psi(x) := \varphi(x-x^*+\frac12)$. 
    In any case, this entails that $\psi$ is compactly supported with $\psi\in C^p(\R)$ and $\psi^{(s)}(0) \neq 0$. Take $\delta_0=(x_0\land(1-x_0))/2$ if $x_0\in(0,1)$ and set $\delta_0=\frac12$ if $x_0\in\{0,1\}$. Now fix $f_0\in \mathcal W^\circ$, which entails that $\inf_{x\in[0,1]}f_0(x)>0$, and $\|f_0\|_{(\infty,p)}<M$, and define $\eta_0 = \left(\frac{\inf_{x\in[0,1]}f_0(x)}{\|\psi\|_\infty} \right)\land\left( \frac{M-\|f_0\|_{(\infty,p)}}{\|\psi\|_{(\infty,p)}}\right)>0$. For $0<\eta<\eta_0$, $0<\delta<\delta_0$ and $x\in[0,1]$ consider 
    $$
    f_1(x) := f_0(x) + \eta\delta^p\psi((x-x_0)/\delta).
    $$
    By construction, we have $\inf_{x\in[0,1]}f_1(x)\ge \inf_{x\in[0,1]}f_0(x) - \eta\|\psi\|_{L^\infty}>0$ and $\|f_1\|_{(\infty,p)} \le \|f_0\|_{(\infty,p)} + \eta\|\psi\|_{(\infty,p)} < M$. Clearly $f_1\in C^p[0,1]$. In the case $x_0\in[0,1)$, notice that
    \begin{align*}
        \int_0^1 \psi\left( \frac{x-x_0}{\delta}\right)dx
        &=\delta \int_{-\frac{x_0}{\delta}}^{\frac{1-x_0}{\delta}} \psi(u)du
        =\delta
        \int_{-\frac{x_0}{\delta}+x^*-\frac12}^{\frac{1-x_0}{\delta}+x^*-\frac12} \varphi(u)du\\
        &=
        -\delta \int_{-\frac{x_0}{\delta}+x^*}^{\frac{1-x_0}{\delta}+x^*}g(u)du + \delta b \int_{-\frac{x_0}{\delta}+x^*-1}^{\frac{1-x_0}{\delta}+x^*-1}g(u)du.
    \end{align*}
    If we specialize to the case $x_0\in(0,1)$, we observe that the integral limits contain the support of $g$ whenever $\delta<(x_0\land(1-x_0))/2$, and since $b=1$ in that case, we see that the expression in the previous display is equal to zero. In the case $x_0=0$ we see that for $\delta<\frac12$ the expression in the previous display reads
    $$
    -\delta \int_{x^*}^{\frac{1}{\delta}+x^*}g(u)du + \delta b \int_{x^*-1}^{\frac{1}{\delta}+x^*-1}g(u)du
    =
    -\delta \int_{x^*}^{\frac12}g(u)du + \delta b \int_{-\frac12}^{\frac12}g(u)du = 0,
    $$
    in view of our definition of $b$.
    A similar consideration for the case $x_0=1$ reveals that
    \begin{align*}
        \int_0^1 \psi\left( \frac{x-x_0}{\delta}\right)dx
        &=\delta \int_{-\frac{1}{\delta}}^{0} \psi(u)du
        =\delta
        \int_{-\frac{1}{\delta}-x^*+\frac12}^{-x^*+\frac12} \varphi(u)du\\
        &=
        -\delta \int_{-\frac{1}{\delta}-x^*+1}^{-x^*+1}g(u)du + \delta b \int_{-\frac{1}{\delta}-x^*}^{-x^*}g(u)du.
    \end{align*}
    Thus, if $\delta<\frac12$, this equals 
    $$
    -\delta \int_{-\frac12}^{\frac12}g(u)du + \delta b \int_{-\frac12}^{-x^*}g(u)du = 0
    $$
    in view of our choice of $b$ in that case.
    Hence, in all the cases, we have shown that $f_1\in\mathcal W^\circ$.
\end{proof}

\subsection{Proof of Theorem~\ref{thm:LBparam}}

We make use of Corollary~3.1 in \cite{Rohde20}. Therefore, we recall the total variation modulus of continuity of $\Lambda$ on $\mathcal W$, defined by $\omega_{TV}(\eps) := \sup\{|\Lambda(f_0) - \Lambda(f_1)| : f_0,f_1\in\mathcal W, \frac12\int_0^1 |f_0(x)-f_1(x)|dx\le \eps\}$. Now, from the cited corollary, we get
\begin{equation}\label{eq:RohdeSteinb}
\inf_{Q\in\mathcal Q_\alpha} \inf_{\hat{\Lambda}_n} \sup_{f\in\mathcal W} \E_{QP_f}\left[|\hat{\Lambda}_n - \Lambda(f)|\right]
\ge \frac18 \omega_{TV}\left(\left[\frac{1}{\sqrt{8n(e^\alpha-1)^2}}\right]^-\right),
\end{equation}
for all $n>\log 2$, where $\omega_{TV}(\eps^-)$ denotes the limit from the left.

In order to lower bound the total variation modulus, we first show that there exist two distinct densities $f,g\in\mathcal W$, such that $T_{f}(f) \neq T_{f}(g)$, where $T_{f}$ is the linear functional derivative of $\Lambda$ at $f$ from \eqref{eq: form functional}. Suppose that $T_{f}(f) = T_{f}(g)$ for all $f,g\in\mathcal W$. Now pick arbitrary but distinct densities $f$ and $g$ in $\mathcal W$, which is possible because $\Lambda$ is not constant on $\mathcal W$. For $t\in[0,1]$, set $g_t := (1-t)f + tg$, which belongs to $\mathcal W$ by convexity, and notice that for $h\in\R$, such that $t+h\in[0,1]$, we have $g_{t+h} = (1-t-h)f + (t+h)g = g_t + h(g-f)$. Define $F(t) := \Lambda(g_t)$. By \eqref{eq: form functional} and our assumption, we have
\begin{align*}
F(t+h) - F(t) &= \Lambda(g_{t+h}) - \Lambda(g_t) 
= T_{g_t}(g_{t+h}-g_t) + O(\|g_{t+h}-g_t\|_{(2,m,\lambda)}^2)\\
&=
T_{g_t}(g_{t+h}) - T_{g_t}(g_t) + 
O(h^2\|g-f\|_{(2,m,\lambda)}^2) = O(h^2\|g-f\|_{(2,m,\lambda)}^2), 
\end{align*}
whenever $t\in[0,1]$, $t+h\in[0,1]$ and $|h|$ is sufficiently small such that $\|h(g-f)\|_{(\infty,m)}$ is small enough. Thus, we conclude that $F$ is continuous on $[0,1]$ and $F'(t) = 0$ for all $t\in(0,1)$, which forces $F$ to be constant on $[0,1]$. In particular, $\Lambda(f) = F(0) = F(1) = \Lambda(g)$. Since $f$ and $g$ were arbitrary, $\Lambda$ is constant on $\mathcal W$, a contradiction. Thus, we may pick $f,g\in\mathcal W$ such that $c:=T_f(g-f)\neq0$.

Now fix arbitrary $\eps>0$ and set $f_0:=f\in\mathcal W$ and $f_1 := g_\eps = (1-\eps)f + \eps g \in\mathcal W$, so that $f_1-f_0 = \eps(g-f)$, $\frac12\|f_1-f_0\|_1 = \frac{\eps}{2}\|g-f\|_1 \le \frac{\eps}{2}(\|g\|_1+\|f\|_1) = \eps$, $T_{f_0}(f_1-f_0) = \eps T_{f}(g-f) = c\eps$ and $\|f_1-f_0\|^2_{(2,m,\lambda)} = \eps^2 \|g-f\|^2_{(2,m,\lambda)}$. Thus, we get from \eqref{eq: form functional},
\begin{align*}
    |\Lambda(f_0)-\Lambda(f_1)| 
    &= |T_{f_0}(f_1-f_0) + O(\|f_1-f_0\|^2_{(2,m,\lambda)})|\\
    &\ge |T_{f_0}(f_1-f_0)| - C_\Lambda\|f_1-f_0\|^2_{(2,m,\lambda)} \\
    &\ge |c|\eps - \eps^2 C_\Lambda\|g-f\|^2_{(2,m,\lambda)},
\end{align*}
provided that $\|f_1-f_0\|_{(\infty,m)} = \eps \|g-f\|_{(\infty,m)}$ is small enough, say less than some constant $c_\Lambda$, and where the positive constants $c_\Lambda$ and $C_\Lambda$ depend only on the functional $\Lambda$. The lower bound in the previous display therefore also lower bounds the modulus of continuity $\omega_{TV}(\eps)$ and together with \eqref{eq:RohdeSteinb} the proof is finished.
\hfill\qed

\subsection{Proof of Theorem~\ref{thm:LBatomic}}

We proceed as in the proof of Theorem~\ref{thm:LBparam} to get
\begin{equation}\label{eq:RohdeSteinb2}
\inf_{Q\in\mathcal Q_\alpha} \inf_{\hat{\Lambda}_n} \sup_{f\in\mathcal W} \E_{QP_f}\left[|\hat{\Lambda}_n - \Lambda(f)|\right]
\ge \frac18 \omega_{TV}\left(\left[\frac{1}{\sqrt{8n(e^\alpha-1)^2}}\right]^-\right),
\end{equation}
for all $n>\log 2$, where $\omega_{TV}(\eps^-)$ denotes the limit from the left.

Now fix $f_0\in\mathcal W$ such that $T_{f_0}$ is of index $s\le p$, that is, for all $h\in C^p[0,1]$ we have $T_{f_0}(h) = \sum_{j=0}^s \int_0^1 h^{(j)} d\mu_{j,f_0}$ where $\mu_{s,f_0}$ has a point mass at $x_{f_0}\in[0,1]$, say, with $\mu_{s,f_0}(\{x_{f_0}\}) \neq 0$. In what follows we suppress the dependence on $f_0$ for convenience and write $x_0=x_{f_0}$ and $\mu_j = \mu_{j,f_0}$.
By non-degeneracy, pick $\psi:\R\to\R$ compactly supported with $\psi^{(s)}(0)\neq0$, $\delta_0 = \delta_0(x_0)>0$ and $\eta_0 = \eta_0(f_0)>0$ as in Definition~\ref{def:ND}, such that 
$$
    f_1(x) := f_0(x) + \eta\delta^p\psi((x-x_{0})/\delta), \quad x\in[0,1]
$$
belongs to $\mathcal W$ for all $\eta\in(0,\eta_0)$ and all $\delta\in(0,\delta_0)$.
In particular, we have $\frac12\|f_1-f_0\|_{L^1} = \frac{\eta\delta^p}{2}\int_0^1 |\psi((x-x_{0})/\delta)|dx \le \eta\frac{\delta^{p+1}}{2}\int_\R |\psi(x)|dx = \eta\frac{\delta^{p+1}}{2} \|\psi\|_{L^1} \le \eps$, provided that we have $\delta \le (2\eps/(\eta\|\psi\|_{L^1}))^{\frac{1}{p+1}}$. Furthermore, $\|f_1-f_0\|^2_{(2,m,\lambda)} = \eta^2\delta^{2p} \|\psi((\cdot-x_{0})/\delta)\|^2_{(2,m,\lambda)}\le \eta^2\delta^{2p}\|\psi\|^2_{(\infty,m)}$ and 
\begin{align}\label{eq:sterm}
    \int_0^1 (f_1-f_0)^{(s)}d\mu_{s} = \eta\delta^{p-s} \int_0^1 \bar{\psi}_\delta(x)\mu_{s}(dx),
\end{align} 
where $\bar{\psi}_\delta(x):= \psi^{(s)}((x-x_{0})/\delta)$ is bounded by $\|\psi^{(s)}\|_{L^\infty}<\infty$. Since the support of $\psi$ is compact we have $\bar{\psi}_\delta(x)\to \psi^{(s)}(0)\mathds 1_{\{x_0\}}(x)$ for all $x\in[0,1]$ as $\delta\to0$. Therefore, by the dominated convergence theorem the integral on the right hand side of \eqref{eq:sterm} converges to $\psi^{(s)}(0)\mu_s(\{x_0\})\neq0$ as $\delta\to0$. Similarly, for $j<s$, we have
\begin{align*}
    \left|\int_0^1 (f_1-f_0)^{(j)}d\mu_{j}\right| \le \eta\delta^{p-j} \|\psi^{(j)}\|_{L^\infty}|\mu_{j}|([0,1]),
\end{align*} 
which shows that 
$$
|T_{f_0}(f_1-f_0)| \ge \left|\int_0^1 (f_1-f_0)^{(s)}d\mu_{s}\right| - \left| \sum_{j=0}^{s-1}\int_0^1 (f_1-f_0)^{(j)}d\mu_{j} \right|
\ge
\delta^{p-s}c_0 - O(\delta^{p-s+1})
$$ 
for a $c_0>0$ and for all sufficiently small $\delta>0$.
Thus, we get from \eqref{eq: form functional},
\begin{align*}
    |\Lambda(f_0)-\Lambda(f_1)| 
    &= |T_{f_0}(f_1-f_0) + O(\|f_1-f_0\|^2_{(2,m,\lambda)})|\\
    &\ge |T_{f_0}(f_1-f_0)| - C_0\delta^{2p}\|\psi\|^2_{(\infty,m)} \\
    &\ge \delta^{p-s}c_0 + O(\delta^{p-s+1}),
\end{align*}
provided that $\|f_1-f_0\|_{(\infty,m)} = O(\delta^{p-m})$ is small enough. The lower bound in the previous display therefore also lower bounds the modulus of continuity $\omega_{TV}(\eps)$ and together with \eqref{eq:RohdeSteinb2} and taking $\eps= (8n(e^\alpha-1)^2)^{-1/2}$ and $\delta=(2\eps/(\eta\|\psi\|_{L^1}))^{\frac{1}{p+1}}\land \frac{\delta_0}{2}$ the proof is finished.
\hfill\qed


\section{Proofs of Sections~\ref{sec:SplineEst} and \ref{sec:WaveletEst} on plug-in estimators}
\label{app:Main}
First, we note that for $f,g$ in $\mathcal{W}_p$, we have the following upper bound
\begin{equation}\label{eq: Taylor ineq Frechet deriv}
|\Lambda(f)-\Lambda(g)|\leq \underset{u\in \mathcal{W}_p}{\sup}\norm{T_u}_p\, \norm{f-g}_{(\infty,p)}.
\end{equation}
Indeed, for $t\in[0,1]$ and $\gamma(t)=f+t(g-f)$, $\Lambda(\gamma(t))$ defines a differentiable function since
\[\frac{\Lambda(\gamma(t+h))-\Lambda(\gamma(t))}{h}=T_{\gamma(t)}(g-f)+O\left(h\norm{f-g}^2_{(2,m,\lambda)}\right)\underset{h\to0}{\to} T_{\gamma(t)}(g-f),\]
and \eqref{eq: bound deriv func} implies that
\begin{align*}
  |\Lambda(f)-\Lambda(g)|&=|\Lambda(\gamma(0))-\Lambda(\gamma(1))|\\
  &=\left|\int_0^1 T_{\gamma(t)}(g-f) dt\right|\\
  &\leq \underset{u\in \mathcal{W}_p}{\sup}\norm{T_u}_p\, \norm{f-g}_{(\infty,p)}.
\end{align*}
Then, \eqref{eq: form functional} implies that for some $C>0$, there exists $\delta$ such that for any density estimator $\hat{f}$
\begin{align}
    \mathbb{E}\left[|\Lambda(\hat{f})-\Lambda(f)|\right]&=\mathbb{E}\left[|\Lambda(\hat{f})-\Lambda(f)|\mathds{1}_{\norm{f-\hat{f}}_{(\infty,m)}\leq \delta}\right]+\mathbb{E}\left[|\Lambda(\hat{f})-\Lambda(f)|\mathds{1}_{\norm{f-\hat{f}}_{(\infty,m)}> \delta}\right]\nonumber\\
    &\leq \mathbb{E}\left[|T_f(f-\hat{f})| + C\norm{\hat{f}_n-f}_{(2,m,\lambda)}^2\right]+\underset{u\in \mathcal{W}_p}{\sup}\norm{T_u}_p\ \mathbb{E}\left[\norm{f-\hat{f}}_{(\infty,p)}\mathds{1}_{\norm{f-\hat{f}}_{(\infty,m)}> \delta}\right]\nonumber\\
    &\leq \mathbb{E}\left[|T_f(f-\hat{f})|\right]+ C\mathbb{E}\left[\norm{\hat{f}_n-f}_{(2,m,\lambda)}^2\right]\nonumber\\
    &\qquad +\underset{u\in \mathcal{W}_p}{\sup}\norm{T_u}_p\,  \sqrt{\mathbb{E}\left[\norm{f-\hat{f}}_{(\infty,p)}^2\right]}\, \mathbb{P}\left[\norm{f-\hat{f}}_{(\infty,m)}> \delta\right]^{1/2},\label{eq: Taylor expansion Expectation}
\end{align}
where we used \eqref{eq: Taylor ineq Frechet deriv} and the last line follows from Cauchy-Schwarz inequality.

\subsection{Proof of Theorem~\ref{th: rate atomic}}

We first control the nonparametric convergence rates (pointwise and in $L^2$ distance) of the estimator $\hat{f}_n$ before translating these rates into semiparametric ones for $\Lambda(\hat{f}_n)$, using arguments from \cite{GoldsteinMesser92}.\\

\textit{Bias of $\hat{f}_n$}: It is clear that $\mathbb{E}[\hat{f}_n]=\mathcal{S}_{d,\mathbf{\xi}^{(j_n)}}f$, so that the bias of this estimator is given by \eqref{eq:bias} with $q=0$ and $j=j_n$. A bound on the norm of $f^{(q)} - \mathbb{E}_f\left[\hat{f}_n^{(q)}\right]$ can also be obtained  via the inequality \eqref{eq:bias} for $q \leq p-1$, noting that $\mathbb{E}\left[\hat{f}_n^{(q)}\right]= \mathbb{E}\left[\hat{f}_n\right]^{(q)}$.

\textit{Variance of $\hat{f}_n$}: Turning to the variance of this term, by independence, it is given by the sum of the variance coming from the Laplace terms, and the variance coming from the empirical polynomial coefficents. For any finite absolutely continuous measure $G$ on $[0,1]$ with bounded density $g$, we have
\begin{align} \label{eq: var decomp spline estim}
&\mathbb{E}\norm{\hat{f}_n-\mathcal{S}_{d,\mathbf{\xi}^{(j)}}f}_{L^2(G)}^2= \nonumber \\&\mathbb{E}\int_0^1 \left[\sum_{k=1}^{2^{j_n}+d} \left(n^{-1}\sum_{i=1}^n e_{k,j_n}(X_i)-\mathcal{L}_{k,d,\xi^{(j_n)}} \norm{B_{k,d,\xi^{(j_n)}}}_{L^2}\right)\mathbf{B}_{k,d,\xi^{(j_n)}}(x)\right]^2 g(x)dx\nonumber\\
&\qquad+\sigma_{\alpha,j_n}^2\mathbb{E}\int_0^1 \left[\sum_{k=1}^{2^{j_n}+d} \left(n^{-1}\sum_{i=1}^n Y_{k,i}\right) \mathbf{B}_{k,d,\xi^{(j_n)}}(x) \right]^2g(x) dx.
\end{align}
For any $0\leq x\leq1$, we have by independence and the fact that $Y_{k,i}$ has mean 0, that 
\begin{align*}
    &\mathbb{E}\left[\sum_{k=1}^{2^{j_n}+d} \left(n^{-1}\sum_{i=1}^n Y_{k,i}\right) \mathbf{B}_{k,d,\xi^{(j_n)}}(x) \right]^2 \\& \qquad= \sum_{k=1}^{2^{j_n}+d} \sum_{k'=1}^{2^{j_n}+d} \mathbb{E}\left[n^{-1}\sum_{i=1}^n Y_{k,i}\right]\left[n^{-1}\sum_{i'=1}^n Y_{k',i'}\right]  \mathbf{B}_{k,d,\xi^{(j_n)}}(x)\mathbf{B}_{k',d,\xi^{(j_n)}}(x)\\
    &\qquad= \sum_{k=1}^{2^{j_n}+d} \mathbb{E}\left[n^{-1}\sum_{i=1}^n Y_{k,i}\right]^2 \mathbf{B}_{k,d,\xi^{(j_n)}}(x)^2.
\end{align*}
Therefore, the second term from \eqref{eq: var decomp spline estim} is bounded by $2n^{-1}\sigma_{\alpha,j_n}^2\sum_{k=1}^{2^{j_n}+d} \int_0^1\mathbf{B}_{k,d,\xi^{(j_n)}}(x)^2 g(x)dx$. Since $\sigma_{\alpha,j_n}=C_d\alpha^{-1}2^{j_n/2}$ as in Proposition \ref{th: privacy}, this variance term is then upper bounded by $C_d \norm{g}_{L^\infty} n^{-1}\alpha^{-2} 2^{2j_n}$ (we use that $\mathbf{B}_{k,d,\xi^{(j_n)}}$ are normalized in $L^2$-norm). As for the first variance term from \eqref{eq: var decomp spline estim}, we now use that B-splines have almost disjoint supports. More precisely, (1.6) in \cite{LycheSpline} implies that 
$\mathbf{B}_{k,d,\xi^{(j_n)}}(x) \mathbf{B}_{k',d,\xi^{(j_n)}}(x) =0$
for any $0<x<1$ and $|k-k'|\geq d+1$. Writing $v_{n,k}=n^{-1}\sum_{i=1}^n e_{k,j_n}(X_i)-\mathcal{L}_{k,d,\xi^{(j_n)}}\norm{B_{k,d,\xi^{(j_n)}}}_{L^2}$, the disjoint support property gives that 
\begin{align*}
\mathbb{E}\int_0^1 \left(\sum_{k=1}^{2^{j_n}+d} v_{n,k}\mathbf{B}_{k,d,\xi^{(j_n)}}(x)\right)^2 &g(x) dx =  \sum_{k=1}^{2^{j_n}+d} \mathbb{E}\left[v_{n,k}^2\right]  \int_0^1\mathbf{B}_{k,d,\xi^{(j_n)}}(x)^2 g(x)dx \\
&+\sum_{k\neq k',\ |k-k'|\leq d} \mathbb{E}\left[v_{k,n}v_{k',n}\right]\int_0^1\mathbf{B}_{k,d,\xi^{(j_n)}}(x)\mathbf{B}_{k',d,\xi^{(j_n)}}(x) g(x)dx.
\end{align*}
Let's bound these two terms. By independence of observations and since the centered second-order moment is smaller than the non-centered one, we obtain 
\begin{align}
\mathbb{E}_f\left[v_{n,k}^2\right]&\leq n^{-1}\mathbb{E}_f\left[e_{k,j_n}(X_1)^2\right]\nonumber\\
&\leq h_{k,d,\xi^{(j_n)}}^{-2} \norm{B_{k,d,\xi^{(j_n)}}}_{L^2}^2\ n^{-1}\int_{\xi^{(j_n)}_{m_{k,d}}}^{\xi^{(j_n)}_{m_{k,d}+1}} \left(\sum_{i=0}^d a_{k,i}\left(\frac{x-\xi^{(j_n)}_{m_{k,d}}}{h_{k,d,\xi^{(j_n)}}}\right)^i\right)^2f(x)dx.\label{eq: bound empirical coefficient}
\end{align}
The square in the above integral is uniformly bounded by a constant $C_d$ depending on $d$ only (the coefficients $a_{k,i}$ are bounded by such a quantity, see Section \ref{sec:SplineEst}), so that the above expectation is upper bounded by $C_d \norm{f}_{L^\infty} n^{-1}$, given \eqref{eq: norm splines} and the fact that $h_{k,d,\xi^{j_n}}=2^{-j_n}$. We can then bound the first term as before by $C_d\norm{f}_{L^\infty} \norm{g}_{L^\infty} n^{-1}2^{j_n}$ since $\int_0^1\mathbf{B}_{k,d,\xi^{(j_n)}}(x)^2 g(x)dx\leq \norm{g}_{L^\infty}$. For the second term, we use Cauchy-Schwarz inequality to bound each element of the sum as
\[ \mathbb{E}\left[v_{n,k}v_{n,k'}\right]\int_0^1\mathbf{B}_{k,d,\xi^{(j_n)}}(x)\mathbf{B}_{k',d,\xi^{(j_n)}}(x) g(x)dx\leq C_d \norm{f}_{L^\infty} \norm{g}_{L^\infty} n^{-1}\]and then use the fact that there are at most $C_d 2^{j_n}$ elements in this sum to obtain the bound $C_d \norm{f}_{L^\infty} \norm{g}_{L^\infty} n^{-1} 2^{j_n}$ on this term.\\

For the purpose of characterizing pointwise variance $\mathbb{E}_f|\hat{f}_n(x_0) - (\mathcal{S}_{d,\mathbf{\xi}^{(j_n)}}f)(x_0)|^2$, which can be decomposed similarly, we use \eqref{eq: norm splines} and \eqref{eq: partition of unity}, the positivity of B-splines, and the fact that they have bounded support, included in $[\xi_k, \xi_{k+d+1}]$, to obtain for any $\xi_l \leq x_0\leq \xi_{l+1}$,
\begin{align*}
    \sigma_{\alpha,j_n}^2\mathbb{E}\left(\sum_{k=1}^{2^{j_n}+d} \left(n^{-1}\sum_{i=1}^n Y_{k,i}\right) \mathbf{B}_{k,d,\xi^{(j_n)}}(x_0) \right)^2
    &\leq\sigma_{\alpha,j_n}^2 \mathbb{E} \sup_{l-d\leq k\leq l} \left(n^{-1}\sum_{i=1}^n Y_{k,i}\right)^2\norm{B_{k,d,\xi^{(j_n)}}}_{L^2}^{-2}\\
    &\leq \sigma_{\alpha,j_n}^2C_d n^{-1} 2^{j_n} \leq C_d \alpha^{-2} n^{-1} 2^{2j_n}. 
\end{align*}
Also, we deduce from \eqref{eq: partition of unity}, the bounded support, \eqref{eq: bound empirical coefficient} and \eqref{eq: norm splines} that
\begin{align*}
    \mathbb{E}_f\left(\sum_{k=1}^{2^{j_n}+d} v_{n,k}\mathbf{B}_{k,d,\xi^{(j_n)}}(x_0)\right)^2
    &\leq \mathbb{E}_f \sup_{l-d\leq k\leq l} v_{n,k}^2\norm{B_{k,d,\xi^{(j_n)}}}_{L^2}^{-2}\\
    &\leq C_d \norm{f}_{L^\infty} n^{-1} 2^{j_n}. 
\end{align*}

Therefore, the variance of our estimator is bounded by $Cn^{-1}\left(\alpha^{-2}2^{2j_n}+2^{j_n}\right)$ (whether for $L^2$ or pointwise convergence), for $C$ a constant depending on $d$, $\norm{f}_{L^\infty}$ and $\norm{g}_{L^\infty}$ if $L^2(G)$--convergence is considered. The same arguments on the variance carry over to the derivatives of order $q\leq p-1$, thanks to \eqref{eq: derivatives}, with a multiplier of order $C_d 2^{qj_n}$, so that 
\begin{align*}
\underset{0\leq x_0\leq 1}{\sup}\left(\mathbb{E}_f|\hat{f}_n^{(q)}(x_0) - (\mathcal{S}_{d,\mathbf{\xi}^{(j_n)}}f)^{(q)}(x_0)|^2\right)\,&\vee\, \left( \mathbb{E}_f\norm{\hat{f}_n^{(q)} - (\mathcal{S}_{d,\mathbf{\xi}^{(j_n)}}f)^{(q)}}_{L^2(G)}^2\right)\\&\leq C_{d,M} n^{-1}\left[\alpha^{-2} 2^{(2+2q)j_n} + 2^{(1+2q)j_n}\right],\end{align*}
where we used that $\norm{f}_{L^\infty}$ and $\norm{g}_{L^\infty}$ are bounded by $M$ under Assumption \ref{assum: no singular continuous part} and $f\in\mathcal{W}_p$. Combined with \eqref{eq:bias} and  $\norm{f^{(q)}}_{\infty}\leq M$ since $f\in\mathcal{W}_p$,
\begin{align}\label{eq: derivative loss}
   \underset{0\leq x_0\leq 1}{\sup}&\left(\mathbb{E}_f|\hat{f}_n^{(q)}(x_0) - f^{(q)}(x_0)|^2\right)\,\vee\, \left( \mathbb{E}_f\norm{\hat{f}_n^{(q)} - f^{(q)}}_{L^2(G)}^2\right)\nonumber\\ &\leq C_{d,M}\left[ 2^{(2q-2p)j_n}+ n^{-1}(\alpha^{-2} 2^{(2+2q)j_n} + 2^{(1+2q)j_n})\right]. 
\end{align}
With the bound on $j_n$ from the statement of Theorem~\ref{th: rate atomic}, we can bound for $q\leq p-1$,
\begin{align*}
    &\underset{0\leq x_0\leq 1}{\sup}\left(\mathbb{E}_f|\hat{f}_n^{(q)}(x_0) - f^{(q)}(x_0)|^2\right)\,\vee\, \left(\mathbb{E}_f\norm{\hat{f}_n^{(q)} - f^{(q)}}_{L^2(G)}^2\right)\\
    &\leq C_{d,M} \left[(n\alpha^2)^{\frac{-2(p-q)}{2p+2}}\vee n^{\frac{-2(p-q)}{2p+1}} \right]\coloneqq \tau_{q,n}.
\end{align*}

\textit{Semiparametric rates for $\Lambda(\hat{f}_n)$}: For atomic functionals, we can bound the first two terms from \eqref{eq: Taylor expansion Expectation} by
\[C\left[\sum_{j=0}^{s_f} \mathbb{E}_f  \int_0^1 \left|(\hat{f}_n-f)^{(j)}\right|d\mu_{j,f} + \mathbb{E}_f \norm{\hat{f}_n-f}_{(2,m,\lambda)}^2 \right].\]
For any finite signed measure $\mu$ as in Assumption \ref{assum: no singular continuous part}, we use Jordan decomposition $\mu= \mu^+ - \mu^-$, where $\mu^+,\ \mu^-$ are positive finite measures, and introduce $|\mu|= \mu^+ + \mu^-$ and $\Tilde{\mu}= \frac{|\mu|}{|\mu|([0,1])}$ which is a probability measure. Assumption \ref{assum: no singular continuous part} implies that $\Tilde{\mu}$ also has no singular continuous part. From Fubini's and Jensen's Theorems, we bound for $r=1$ or $r=2$ and $j\leq s$,
\begin{align*}
\mathbb{E}_f\int_0^1 |(\hat{f}_n-f)^{(j)}|^rd\mu&\leq \mathbb{E}_f\int_0^1 |(\hat{f}_n-f)^{(j)}|^rd|\mu|\\
&\leq |\mu|([0,1]) \left(\int_0^1 \mathbb{E}_f|(\hat{f}_n-f)^{(j)}|^2 d\Tilde{\mu} \right)^{1/(3-r)}\\
&\leq |\mu|([0,1]) \left(\mathbb{E}_f\int_0^1 |(\hat{f}_n-f)^{(j)}|^2 d\Tilde{\mu}_{\text{cont}} + \mathbb{E}_f \int_0^1 |(\hat{f}_n-f)^{(j)}|^2 d\Tilde{\mu}_{\text{pp}} \right)^{1/(3-r)}\\
&\leq C_{d,M}\ |\mu|([0,1])\ \tau_{j,n}^{1/(3-r)},
\end{align*}
where $|\mu|([0,1])$ is upper bounded by a quantity depending on $M$ only by assumption. As a consequence, for $n$ large enough, the first two terms from \eqref{eq: Taylor expansion Expectation} are bounded by
\[C\left[\sum_{j=0}^{s_f} \tau_{j,n}^{1/2} + \sum_{j=0}^{m}\tau_{j,n} \right]\leq C\tau_{s,n}^{1/2},\]
as we assumed $p\geq 2m-s$ and $s_f\leq s$. The last term from \eqref{eq: Taylor expansion Expectation} according to Lemma \ref{lem: exponential decay outer taylor exp} is of order $o(e^{-n^c})$ for some $c>0$, so that for $n$ large enough
\[\underset{f\in\mathcal{W}_p}{\sup}\mathbb{E}_f|\Lambda(f)-\Lambda(\hat{f}_n)|\leq C\tau^{1/2}_{s,n}\leq C_{d,M}\left[(n\alpha^2)^{\frac{-2(p-s)}{2p+2}}\vee n^{\frac{-2(p-s)}{2p+1}} \right].\]
\hfill\qed

\subsection{Proof of Theorem~\ref{th: atomic wav}}

As in the previous section, we first control the nonparametric convergence rates (pointwise and in $L^2$ distance) of the estimator $\hat{f}_n$ before translating these rates into semiparametric ones for $\Lambda(\hat{f}_n)$, using arguments from \cite{GoldsteinMesser92}.

As a preliminary consideration, we introduce $P_{\mathcal{V}_{j_n+1}}f$ the orthogonal projection in $L^2[0,1]$ of $f$ on the space of splines with equispaced knots $\mathcal{V}_{j_n+1}$ as in \eqref{eq: equi spline space}, and we remark that $\E\hat{f}_n$ satisfies
\begin{align*} 
\E\hat{f}_n&= \sum_{k\in\mathcal{M}_{j_0-1}} \langle f , \mathbf{B}_{k,d,\xi^{(j_0)}}\rangle \Tilde{\psi}_{j_0-1,k}  + \sum_{j=j_0}^{j_n}\sum_{k\in\mathcal{M}_{j}} \langle f , \psi_{j,k}\rangle \Tilde{\psi}_{j,k}\\
&= \sum_{k\in\mathcal{M}_{j_0-1}} \langle P_{\mathcal{V}_{j_n+1}}f , \mathbf{B}_{k,d,\xi^{(j_0)}}\rangle \Tilde{\psi}_{j_0-1,k} + \sum_{j=j_0}^{j_n}\sum_{k\in\mathcal{M}_{j}} \langle P_{\mathcal{V}_{j_n+1}}f , \psi_{j,k}\rangle \Tilde{\psi}_{j,k}\\
&= P_{\mathcal{V}_{j_n+1}}f,
\end{align*}
where we used the definition of the dual frames in the last line. Since $\mathcal{S}_{d,\xi^{(j_n+1)}}f\in \mathcal{V}_{j_n+1}$ for $\xi^{(j_n+1)}$ as in \eqref{eq:knot seq multires}, the definition of orthogonal projection and equation \eqref{eq:bias} with $q=0$ implies that
\begin{equation}\label{eq: L2 bias proj spline}
    \norm{f-P_{\mathcal{V}_{j_n+1}}f}_{L^2} \leq \norm{f-\mathcal{S}_{d,\xi^{(j_n+1)}}f}_{L^2} \leq \norm{f-\mathcal{S}_{d,\xi^{(j_n+1)}}f}_{L^\infty} \leq C_d 2^{-p(j_n+1)} \norm{f^{(p)}}_{L^\infty},
\end{equation}
whenever $f\in \mathcal{W}_{p}$ and $p\leq d+1$. For any $g\in \mathcal{V}_{j_n+1}$, we obviously have $P_{\mathcal{V}_{j_n+1}}g=g$ so that 
\[\norm{P_{\mathcal{V}_{j_n+1}}f - \mathcal{S}_{d,\xi^{(j_n+1)}}f}_{L^\infty} = \norm{P_{\mathcal{V}_{j_n+1}}(f - \mathcal{S}_{d,\xi^{(j_n+1)}}f)}_{L^\infty}.\]
This last quantity can be bounded by $c_d \norm{f - \mathcal{S}_{d,\xi^{(j_n+1)}}f}_{L^\infty}$, for some quantity $c_d$ depending on $d$ only and not on the knot sequence, according to De Boor's conjecture (proved in \cite{Shadrin2001}, see Theorem 1 therein) which states that
\[\sup_{f\in L^\infty([0,1]),\ f\neq 0}\frac{\norm{P_{\mathcal{V}_j}f}_{L^\infty}}{\norm{f}_{L^\infty}} \leq c_d,\]
independently of $j$. As a consequence, we deduce from \eqref{eq:bias} with $q=0$ that
\begin{equation}\label{eq: supnorm bias proj spline}
    \norm{f-P_{\mathcal{V}_{j_n+1}}f}_{\infty} \leq (1+c_d)\norm{f-\mathcal{S}_{d,\xi^{(j_n+1)}}f}_{L^\infty} \leq C_d 2^{-p(j_n+1)} \norm{f^{(p)}}_{L^\infty}.
\end{equation}

\textit{Bias of $\hat{f}_n$}: The bias of the estimator $\hat{f}_n$ in $L^2$--norm is bounded by the right-hand side of \eqref{eq: L2 bias proj spline}, while the bias in supnorm is bounded in \eqref{eq: supnorm bias proj spline}.
From the first inequality of Theorem~\ref{th: from L2 to l2}, we deduce that the $\ell^2$ distance between the coefficients $\mathbf{c}_1$ and $\mathbf{c}_2$ in the (normalized) B-splines basis of $\E\hat{f}_n = P_{V_{j_n+1}}f$ and $\mathcal{S}_{d,\mathbf{\xi}^{(j_n+1)}}f$ respectively is bounded from above by a quantity proportional to the $L^2$--distance between these splines. With \eqref{eq:bias} and \eqref{eq: L2 bias proj spline}, the triangle inequality ensures that this distance of order $C_{d,M}2^{-pj_n}$. From \eqref{eq: derivatives}, the derivatives of these two splines lie in the space $\mathbb{S}_{d-1,\xi^{(j_n+1)}}$ and we see that their respective coefficients $\mathbf{c}_1'$ and $\mathbf{c}_2'$ in the (normalized) B-splines basis of this new space are separated in $\ell^2$ distance by at most \begin{equation}\label{eq: distance coef l2}\norm{\mathbf{c}_1'-\mathbf{c}_2'}_{\ell^2}\leq C_d2^{j_n}\norm{\mathbf{c}_1-\mathbf{c}_2}_{\ell^2}\leq C_d 2^{-j_n(p-1)}.\end{equation}
From the first inequality in Theorem~\ref{th: from L2 to l2} and \eqref{eq: distance coef l2}, we again deduce that the $L^2$--distance between $(P_{\mathcal{V}_{j_n+1}}f)^{(1)}$ and $(\mathcal{S}_{d,\mathbf{\xi}^{(j_n+1)}}f)^{(1)}$ is bounded by $C_{d,M}2^{-j_n(p-1)}$, with the constant depending on $M$ and $d$ only. The same arguments applied repeatedly to $\left(P_{\mathcal{V}_{j_n+1}}f\right)^{(q)}$ and $\left(\mathcal{S}_{d,\mathbf{\xi}^{(j_n+1)}}f\right)^{(q)}$, for $q=1,\dots,p-1$ give that these are separated in $L^2$--distance by a quantity bounded by $C_{d,M} 2^{-j_n(p-q)}$. Combining this with \eqref{eq:bias}, we obtain that this is also the case for $\left(P_{\mathcal{V}_{j_n+1}}f\right)^{(q)}$ and $f^{(q)}$,
\[\norm{\left(P_{\mathcal{V}_{j_n+1}}f\right)^{(q)}-f^{(q)}}_{L^2} \leq C_{d,M} 2^{-j_n(p-q)}.\]
Using the same arguments with the second inequality of Theorem \ref{th: from L2 to l2}, we also prove that
\begin{equation}\label{eq: supnorm bias proj}
\norm{\left(P_{\mathcal{V}_{j_n+1}}f\right)^{(q)}-f^{(q)}}_{L^\infty} \leq C_{d,M} 2^{-j_n(p-q)}.
\end{equation}

\textit{Variance of $\hat{f}_n$}: We now bound the variance of the estimator for $L^2$ and pointwise distances. Let's write 
\begin{align*}
K_{j_n}(x,y)&\coloneqq \sum_{k\in\mathcal{M}_{j_0-1}} \mathbf{B}_{k,d,\xi^{(j_0)}}(x) \Tilde{\psi}_{j_0-1,k}(y) + \sum_{j=j_0}^{j_n}\sum_{k\in\mathcal{M}_{j}} \psi_{j,k}(x)\Tilde{\psi}_{j,k}(y)\\
&= \sum_{j=j_0-1}^{j_n}\sum_{k\in\mathcal{M}_{j}} \psi_{j,k}(x)\Tilde{\psi}_{j,k}(y).
\end{align*}
The variance of the estimator, by independence, is given by the sum of the variance coming from the Laplace terms and the variance coming from the empirical spline wavelet coefficients. For any absolutely continuous measure $G$ with bounded Lebesgue density $g$ and $P_n$ the empirical measure of the $X_1\dots,X_n$, we have by
\begin{align} 
\mathbb{E}_f\norm{\hat{f}_n-\mathbb{E}_f\hat{f}_n}_{L^2(G)}^2 &= \mathbb{E}_f\int_0^1 \left(\int_0^1 K_{j_n}(x,y) dP_n(x) - \int_0^1 K_{j_n}(x,y) f(x)dx \right)^2 g(y)dy\nonumber\\
&+\sum_{k\in\mathcal{M}_{j_0-1}} 2\frac{\sigma_{\alpha,j_0-1}^2}{n} \norm{\Tilde{\psi}_{j_0-1,k}}_{L^2(G)}^2 + \sum_{j=j_0}^{j_n}\sum_{k\in\mathcal{M}_{j}} 2\frac{\sigma_{\alpha,j}^2}{n}\norm{\Tilde{\psi}_{j,k}}_{L^2(G)}^2\nonumber\\
&\leq \norm{g}_{L^\infty}n^{-1} \mathbb{E}_f\int_0^1 \left(  \sum_{j=j_0-1}^{j_n}\sum_{k\in\mathcal{M}_{j}} \psi_{j,k}(X_1)\Tilde{\psi}_{j,k}(y) \right)^2dy\nonumber\\
&+2n^{-1}\norm{g}_{L^\infty}\left[\sum_{k\in\mathcal{M}_{j_0-1}} \sigma_{\alpha,j_0-1}^2 \norm{\Tilde{\psi}_{j_0-1,k}}_{L^2}^2 + \sum_{j=j_0}^{j_n}\sum_{k\in\mathcal{M}_{j}} \sigma_{\alpha,j}^2\norm{\Tilde{\psi}_{j,k}}_{L^2}^2\right],\label{eq: bound splwav var cont G}
\end{align}
where we bounded the variance by the non-centered second-order moment in the last inequality. Next, we deal with the two terms of \eqref{eq: bound splwav var cont G} and prove that the first one is bounded by $C_d\norm{g}_{\infty}\norm{f}_{\infty}n^{-1} 2^{j_n}$, while the second one is bounded is bounded by $C_d \norm{g}_{L^\infty}\alpha^{-2} n^{-1} j_n^{2a}2^{2j_n}$

Note that all the functions $\psi_{j,k},\psi_{j',k'},\Tilde{\psi}_{j,k}$ and $\Tilde{\psi}_{j',k'}$ are orthogonal for different resolution levels $j\neq j'$, since $S_j^{-1}$ maps into $\mathcal{U}_j$ and the $\mathcal{U}_j$'s are mutually orthogonal. Then, the integral in the first term of \eqref{eq: bound splwav var cont G} is equal to
\[
    \sum_{j=j_0-1}^{j_n} \int_0^1 \left( \sum_{k\in\mathcal{M}_{j}} \psi_{j,k}(X_1)\Tilde{\psi}_{j,k}(y) \right)^2dy.
\]
Let's fix $j$ between $j_0-1$ and $j_n$. We recall that $\psi_{j,k} = S_{j}\Tilde{\psi}_{j,k}$ for some operator $S_{j}$ with $\norm{S_{j}}\leq B_d$ (for the induced $L^2$ operator norm), where $B_d$ is a quantity that depends on $d$ only. Let's write $\mathbf{T}_{j,X}=(\dots,\psi_{j,k}(X),\dots)^T$ where $k$ runs over $\mathcal{M}_{j}$ and $G_{\tilde{\psi},j}=\left(\int_0^1  \Tilde{\psi}_{j,l}(y)\Tilde{\psi}_{j,k}(y) dy\right)_{l,k\in \mathcal{M}_j}$ the Gram matrix of those dual wavelets in $\mathcal{U}_j$. For an arbitrary orthonormal basis $\mathcal{O}_j=(O_q)_{q\in \mathcal{M}_j}$ of $\mathcal{U}_j$ (or $\mathcal{V}_{j_0}$ if $j=j_0-1$), we can also write $C_j$ the matrix whose columns are formed by the coordinates of the $\psi_{j,k}$ basis functions in $\mathcal{O}_j$ (i.e., $(C_j)_{q'k}= \langle O_{q'}, \psi_{j,k}\rangle$), and $\bar{S}_{j}^{-1}$ the matrix of the endomorphism $S_{j}^{-1}$ in  $\mathcal{O}_j$ (i.e., $(\bar{S}_{j}^{-1})_{qq'}= \langle O_{q}, S_j^{-1}O_{q'}\rangle$) which has spectral norm bounded by $A_d^{-1}$ by \eqref{eq: bound operator norm frame inv}. We then have 
\begin{align*}
\langle \Tilde{\psi}_{j,l},  \Tilde{\psi}_{j,k}\rangle &= \sum_{q\in\mathcal{M}_j}  \langle O_{q}, \Tilde{\psi}_{j,l}\rangle \langle O_{q}, \Tilde{\psi}_{j,k}\rangle\\
&=\sum_q \sum_{q'} \langle O_{q'}, \psi_{j,l}\rangle \langle S_j^{-1}O_{q'}, O_q\rangle  \sum_{q''} \langle O_{q''}, \psi_{j,k}\rangle \langle S_j^{-1}O_{q''}, O_q\rangle,
\end{align*}
since $\Tilde{\psi}_{j,l} = \sum_{q'\in\mathcal{M}_j} \langle O_{q'}, \psi_{j,l}\rangle S_j^{-1}O_{q'}$.
Using the fact that $S_j^{-1}$ is self-adjoint, we can then check that
\begin{align}\label{eq: get rid of tilde}
    \int_0^1 \left( \sum_{k\in\mathcal{M}_{j}} \psi_{j,k}(X_1)\Tilde{\psi}_{j,k}(y) \right)^2dy &= \mathbf{T}_{j,X_1}^T G_{\tilde{\psi},j}\mathbf{T}_{j,X_1}\nonumber\\
    &= \mathbf{T}_{j,X_1}^T C_j^T (\bar{S}_{j}^{-1})^{T} \bar{S}_{j}^{-1} C_j \mathbf{T}_{j,X_1}\nonumber\\
    &\leq A_d^{-2} \mathbf{T}_{X_1}^T C_j^T C_j \mathbf{T}_{X_1}\nonumber\\
    &= A_d^{-2} \int_0^1 \left(  \sum_{k\in\mathcal{M}_{j}} \psi_{j,k}(X_1)\psi_{j,k}(y) \right)^2dy.
\end{align}

We also recall that for any $x\in[0,1]$ and fixed $j\geq j_0-1$, there are at most $C_d$ non-zero wavelet evaluations $\psi_{j,k}(x)$. We denote by $I_j(x)$ the set of indices of these non-zero values. Using Cauchy-Schwarz inequality twice on the second line below (once for the integral, once for the expectation)
\begin{align*}
\mathbb{E}_f\int_0^1 \left(  \sum_{k\in\mathcal{M}_{j}} \psi_{j,k}(X_1)\psi_{j,k}(y) \right)^2dy&=\mathbb{E}_f \sum_{k,k'\in I_j(X_1)}\psi_{j,k}(X_1)\psi_{j,k'}(X_1)\int_0^1\psi_{j,k}(y)\psi_{j,k'}(y) dy \\
&\leq \norm{f}_{L^\infty} \sum_{\substack{k,k'\in\mathcal{M}_{j},\\ |k-k'|\leq C_d}} \int_0^1\psi_{j,k}(y)^2 dy\int_0^1\psi_{j,k'}(y)^2 dy \\
&= \sum_{\substack{k,k'\in\mathcal{M}_{j},\\ |k-k'|\leq C_d}} \norm{f}_{L^\infty} \leq C_d \norm{f}_{L^\infty} 2^j.
\end{align*}
Summing these terms, we obtain
\[\norm{g}_{L^\infty}n^{-1} \mathbb{E}_f\int_0^1 \left(  \sum_{j=j_0-1}^{j_n}\sum_{k\in\mathcal{M}_{j}} \psi_{j,k}(X_1)\Tilde{\psi}_{j,k}(y) \right)^2dy \leq C_d\norm{g}_{L^\infty}\norm{f}_{L^\infty}n^{-1} 2^{j_n}. \]

Next, for the second term in \eqref{eq: bound splwav var cont G}, noting that the elements of the wavelet basis are normalised and in view of \eqref{eq: bound operator norm frame inv}, 
\begin{align*}
    \sum_{k\in\mathcal{M}_{j_0-1}} \sigma_{\alpha,j_0-1}^2 \norm{\Tilde{\psi}_{j_0-1,k}}_{L^2}^2 + \sum_{j=j_0}^{j_n}\sum_{k\in\mathcal{M}_{j}} \sigma_{\alpha,j}^2\norm{\Tilde{\psi}_{j,k}}_{L^2}^2 &\leq \sum_{k\in\mathcal{M}_{j_0-1}} \sigma_{\alpha,j_0-1}^2 A_d^{-2} + \sum_{j=j_0}^{j_n}\sum_{k\in\mathcal{M}_{j}} \sigma_{\alpha,j}^2 A_d^{-2} \\
    &= C_d \alpha^{-2} \left[ \sum_{k\in\mathcal{M}_{j_0-1}} 2^{j_0} + \sum_{j=j_0}^{j_n}\sum_{k\in\mathcal{M}_{j}} j^{2a}2^{j}\right]\\
    &\leq C_d \alpha^{-2} j_n^{2a}2^{2j_n}.
\end{align*}

In conclusion, 
\begin{equation}\label{variance stim sp}
    \mathbb{E}_f\norm{\hat{f}_n-\mathbb{E}_f\hat{f}_n}_{L^2(G)}^2 \leq C_d\norm{g}_{L^\infty}n^{-1}(\norm{f}_{L^\infty}2^{j_n} + 2^{2j_n}j_n^{2a}\alpha^{-2}).
\end{equation}
From \eqref{eq: derivatives}, the $q$th derivative of $\hat{f}_n-\mathbb{E}_f\hat{f}_n$ for $0\leq q\leq d-1$ is also a spline, and denoting by $\mathbf{b}(X_1)^{(q)}$ the coefficients in the relevant normalized B-spline basis, we have $\mathbb{E}_f\norm{\mathbf{b}(X_1)^{(q)}}_{\ell^2}^2\leq C_d 2^{2qj_n}\mathbb{E}_f\norm{\mathbf{b}(X_1)}_{\ell^2}^2$. Using Theorem \ref{th: from L2 to l2} and the fact that $\hat{f}_n-\mathbb{E}_f\hat{f}_n \in \mathcal{V}_{j_n+1}$, we obtain a bound on $\mathbb{E}_f\norm{\mathbf{b}(X_1)}_{\ell^2}^2$ similar to the one in \eqref{variance stim sp} (more precisely, we reproduce the above proof with $G$ the Lebesgue measure, $g=1$ and then use Theorem \ref{th: from L2 to l2}), up to a constant depending on $d$. Using Theorem \ref{th: from L2 to l2} again, we deduce that
\begin{equation}\label{variance stim sp derivatives}
    \mathbb{E}_f\norm{\hat{f}_n^{(q)}-\mathbb{E}_f\hat{f}_n^{(q)}}_{L^2(G)}^2 \leq C_d\norm{g}_{L^\infty}n^{-1}(\norm{f}_{L^\infty}2^{j_n(1+2q)} + 2^{2j_n(1+q)}j_n^{2a}\alpha^{-2}).
\end{equation}

We next control the pointwise variance. For any $x_0\in [0,1]$, we have
\begin{align}
\mathbb{E}_f(\hat{f}_n(x_0)-\mathbb{E}_f\hat{f}_n(x_0))^2 &\leq n^{-1} \mathbb{E}_f \left(  \sum_{j=j_0-1}^{j_n}\sum_{k\in\mathcal{M}_{j}} \psi_{j,k}(X_1)\Tilde{\psi}_{j,k}(x_0) \right)^2\nonumber\\
&+2n^{-1}\left[\sum_{k\in\mathcal{M}_{j_0-1}} \sigma_{\alpha,j_0-1}^2 \Tilde{\psi}_{j_0-1,k}(x_0)^2 + \sum_{j=j_0}^{j_n}\sum_{k\in\mathcal{M}_{j}} \sigma_{\alpha,j}^2\Tilde{\psi}_{j,k}(x_0)^2\right].\label{eq:pointwise:var}
\end{align}
We deal with the two terms on the right-hand side separately.
The expectation in the first term of \eqref{eq:pointwise:var} can be bounded as follows:
\begin{align*}
    \mathbb{E}_f \left(  \sum_{j=j_0-1}^{j_n}\sum_{k\in\mathcal{M}_{j}} \psi_{j,k}(X_1)\Tilde{\psi}_{j,k}(x_0) \right)^2 &\leq \norm{f}_{L^\infty} \int_0^1 \left(  \sum_{j=j_0-1}^{j_n}\sum_{k\in\mathcal{M}_{j}} \psi_{j,k}(y)\Tilde{\psi}_{j,k}(x_0) \right)^2 dy\\
    &= \norm{f}_{L^\infty} \sum_{j=j_0-1}^{j_n}\int_0^1 \left(\sum_{k\in\mathcal{M}_{j}} \psi_{j,k}(y)\Tilde{\psi}_{j,k}(x_0) \right)^2 dy.
\end{align*}
For a fixed $x_0\in[0,1]$, the function $w\colon\ y\mapsto \sum_{k\in\mathcal{M}_{j}}\Tilde{\psi}_{j,k}(x_0) \psi_{j,k}(y)$ is the element $w$ of $\mathcal{U}_j$ (or $\mathcal{V}_{j_0}$ if $j=j_0-1$) satisfying $\langle w, \Tilde{\psi}_{j,k}\rangle = \Tilde{\psi}_{j,k}(x_0)$. Indeed,  we know that $w = \sum_{k\in\mathcal{M}_{j}}\langle w, \Tilde{\psi}_{j,k}\rangle \psi_{j,k}$ and the $\psi_{j,k}$ form a basis (see below Theorem \ref{th: frame operator}). This is the reproducing kernel property of a reproducing kernel Hilbert space. According to Theorem 5 of \cite{Rakotomamonjy2005} (or Remark 6 therein), $\mathcal{U}_j$ is indeed an RKHS as, for any $x\in [0,1]$, 
\[\norm{\sum_{k\in\mathcal{M}_{j}}\Tilde{\psi}_{j,k}(x) \psi_{j,k}}_{L^2} \leq \sum_{k\in\mathcal{M}_{j}}|\Tilde{\psi}_{j,k}(x)| \norm{\psi_{j,k}}_{L^2}= \sum_{k\in\mathcal{M}_{j}}|\Tilde{\psi}_{j,k}(x)| < \infty.\]
Theorem 7 from \cite{Rakotomamonjy2005} explicits the form of the kernel which is equal to
\begin{align*}
  K_j\colon [0,1]^2 & \longrightarrow \mathbb{R} \\[-1ex]
  (s,t) & \longmapsto K_j(s,t) = \sum_{k\in\mathcal{M}_{j}}\Tilde{\psi}_{j,k}(s) \psi_{j,k}(t),
\end{align*}
and the symmetry of the kernel ensures that
\begin{equation}\label{eq: sym kernel splwav}
\sum_{k\in\mathcal{M}_{j}}\Tilde{\psi}_{j,k}(s) \psi_{j,k}(t) = \sum_{k\in\mathcal{M}_{j}}\Tilde{\psi}_{j,k}(t) \psi_{j,k}(s).
\end{equation}
We can then write
\begin{align*}
     \int_0^1 \left(\sum_{k\in\mathcal{M}_{j}} \psi_{j,k}(y)\Tilde{\psi}_{j,k}(x_0) \right)^2 dy &= \int_0^1 \left(\sum_{k\in\mathcal{M}_{j}} \psi_{j,k}(x_0)\Tilde{\psi}_{j,k}(y) \right)^2 dy\\
    &\leq A_d^{-2} \int_0^1 \left(\sum_{k\in\mathcal{M}_{j}} \psi_{j,k}(x_0)\psi_{j,k}(y) \right)^2 dy,
\end{align*}
where the inequality was proved in \eqref{eq: get rid of tilde} already. Using that $\norm{\psi_{j,k}}_{L^\infty}\leq C_d 2^{j/2}$, $\norm{\psi_{j,k}}_{L^2}= 1$ and that, for any $x_0\in[0,1]$, at most $C_d$ wavelets $\psi_{j,k}$ are nonzero, we deduce from Cauchy-Schwarz inequality that
\[\int_0^1 \left(\sum_{k\in\mathcal{M}_{j}} \psi_{j,k}(x_0)\psi_{j,k}(y) \right)^2 dy \leq C_d2^j.\]
Next we deal with the second term in \eqref{eq:pointwise:var}. Using that $\Tilde{\psi}_{j,k} = \sum_l \langle \Tilde{\psi}_{j,k}, \Tilde{\psi}_{j,l} \rangle  \psi_{j,l}$ (see below Theorem \ref{th: frame operator}) and $\mathbf{T}_{j,x_0}=(\dots,\psi_{j,k}(x_0),\dots)^T$ (with index $k$ running over $\mathcal{M}_j$),
\begin{align*}
\sigma_{\alpha,j}^2 \sum_{k\in\mathcal{M}_{j}}\Tilde{\psi}_{j,k}(x_0)^2 = \sigma_{\alpha,j}^2(G_{\tilde{\psi},j}\mathbf{T}_{j,x_0})^TG_{\tilde{\psi},j}\mathbf{T}_{j,x_0}= \sigma_{\alpha,j}^2\mathbf{T}_{j,x_0}^TG_{\tilde{\psi},j}^2\mathbf{T}_{j,x_0},
\end{align*}
where $G_{\tilde{\psi},j}$ is the Gram matrix already used above. As above in \eqref{eq: get rid of tilde}, we can bound $\mathbf{T}_{j,x_0}^TG_{\tilde{\psi},j}^2\mathbf{T}_{j,x_0}$ by $A_d^{-2}\mathbf{T}_{j,x_0}^TG_{\psi,j}^2\mathbf{T}_{j,x_0}$ for $G_{\psi,j}$ the Gram matrix of the wavelets in $\mathcal{U}_j$ (instead of the dual wavelets). As the wavelets have compact support, this second Gram matrix is band-limited, with at most $C_d$ nonzero elements around the diagonal, and its non-zero elements are bounded in absolute value by $1$ by Cauchy-Schwarz inequality. Therefore, for any nonzero eigenvector $\mathbf{z}$ of $G_{\psi,j}$ associated to eigenvalue $\lambda$ and taking the index $k$ of its largest element in absolute value, the following inequality stands
\[ |\lambda| |\mathbf{z}_k| \leq \sum_l \left|(G_{\psi,j})_{k,l}\right| |\mathbf{z}_l| \leq C_d |\mathbf{z}_k|. \]
As a consequence, this matrix has its eigenvalues bounded in absolute value by $C_d$ and we can conclude that
\[\norm{G_{\tilde{\psi},j}\mathbf{T}_{j,x_0}}^2_{\ell^2} \leq A_d^{-2} C_d^2 \norm{\mathbf{T}_{j,x_0}}^2_{\ell^2}.\]
Using again that for any $x_0\in[0,1]$ at most $C_d$ wavelets $\psi_{j,k}$ are nonzero and these are bounded by $C_d 2^{j/2}$, this implies 
\[\sigma_{\alpha,j}^2 \sum_{k\in\mathcal{M}_{j}}\Tilde{\psi}_{j,k}(x_0)^2 \leq A_d^{-2} C_d^2 \sigma_{\alpha,j}^2 \sum_{k\in\mathcal{M}_{j}}\psi_{j,k}(x_0)^2 \leq C_d \sigma_{\alpha,j}^2 2^j.\]

Next we deal with the pointwise variance of the $q$--th derivative of the estimator with $q\leq d-1$, which follows a similar decomposition as in \eqref{eq:pointwise:var}. Let's denote $I_{j}(x_0)\subset \mathcal{M}_j$ the set of indices such that $\psi_{j,k}^{(q)}(x_0)\neq 0$ for $k\in I_{j}(x_0)$, which is such that $\left|I_{j}(x_0)\right|\leq C_d$ since at most $C_d$ wavelets have their support intersecting with $\{x_0\}$. From \eqref{eq: sym kernel splwav}, $\sum_{k\in\mathcal{M}_{j}} \psi_{j,k}(X_1)\Tilde{\psi}_{j,k}^{(q)}(x_0)=\sum_{k\in\mathcal{M}_{j}} \Tilde{\psi}_{j,k}(X_1)  \psi_{j,k}^{(q)}(x_0)$ and, following the same arguments as above,
\begin{align*}
    \mathbb{E}_f \left(  \sum_{j=j_0-1}^{j_n}\sum_{k\in\mathcal{M}_{j}} \psi_{j,k}(X_1)\Tilde{\psi}_{j,k}^{(q)}(x_0) \right)^2& \leq  \norm{f}_{L^\infty} \sum_{j=j_0-1}^{j_n}\int_0^1 \left(\sum_{k\in\mathcal{M}_{j}} \Tilde{\psi}_{j,k}(y)\psi_{j,k}^{(q)}(x_0) \right)^2 dy\\
    &=\norm{f}_{L^\infty} \sum_{j=j_0-1}^{j_n}\int_0^1 \left(\sum_{k\in I_j(x_0)} \Tilde{\psi}_{j,k}(y)\psi_{j,k}^{(q)}(x_0) \right)^2 dy\\
    &\leq  \norm{f}_{L^\infty} \sum_{j=j_0-1}^{j_n} \left[\sum_{k\in I_j(x_0)}\psi_{j,k}^{(q)}(x_0)^2\right]\int_0^1 \left(\sum_{k\in I_j(x_0)} \Tilde{\psi}_{j,k}(y)^2 \right) dy.
\end{align*}
Since $\norm{\psi_{j,k}^{(q)}}_{L^\infty}\leq C_{d} 2^{(1+2q)j/2}$ and $\norm{\Tilde{\psi}_{j,k}}_{L^2}=\norm{S_j^{-1}\psi_{j,k}}_{L^2}\leq A_d^{-1}\norm{\psi_{j,k}}_{L^2}=A_d^{-1}$, the above is bounded by
\[\norm{f}_{L^\infty}\sum_{j=j_0-1}^{j_n}  C_{d} 2^{(1+2q)j}= C_d \norm{f}_{L^\infty}2^{(1+2q)j_n}.\]
Finally, from \eqref{eq: derivatives}, we know that any element of $\mathcal{V}_{j_n+1}$ is $(d-1)$--differentiable. Therefore, for $q\leq d-1$, we can introduce $\mathbf{T}_{j,x_0}^{(q)}=(\dots,\psi_{j,k}^{(q)}(x_0),\dots)^T$ (with the index $k$ of the vector running over $\mathcal{M}_j$) and generalize our previous notations. By linearity, we also have
\[\sum_{k\in\mathcal{M}_{j}}\Tilde{\psi}_{j,k}^{(q)}(x_0)^{2}=\norm{G_{\tilde{\psi},j}\mathbf{T}_{x_0}^{(q)}}^2_{\ell^2}\leq A_d^{-2} C_d^2 \norm{\mathbf{T}_{x_0}^{(q)}}^2_{\ell^2}.\]
Using the fact that the wavelets have compact support and hence at most $C_d$ wavelet derivatives $\psi_{j,k}^{(q)}$ are nonzero at any point $x_0$ (since it is the case for $\psi_{j,k}$), and since these are bounded by $C_{d} 2^{(1+2q)j/2}$ according to \eqref{eq: spline wavelets}, there exists a constant $C_{d}$ such that 
\begin{equation}\label{eq: bound l2 sum adj wav}
    \sum_{k\in\mathcal{M}_{j}}\Tilde{\psi}_{j,k}^{(q)}(x_0)^{2}\leq A_d^{-2} C_d \norm{\mathbf{T}_{j,x_0}^{(q)}}^2_{l^2} \leq C_{d} 2^{(1+2q)j}.
\end{equation}
We can conclude that
\begin{equation}\label{variance stim sp pointwise}
    \mathbb{E}_f\left[\hat{f}_n^{(q)}(x_0)-\mathbb{E}_f\hat{f}_n^{(q)}(x_0)\right]^2 \leq C_{d,M}n^{-1}(2^{j_n(1+2q)} + 2^{2j_n(1+q)}j_n^{2a}\alpha^{-2}).
\end{equation}

In the end, we obtain by combining \eqref{eq: supnorm bias proj}, \eqref{variance stim sp}, \eqref{variance stim sp derivatives} and \eqref{variance stim sp pointwise} that
\begin{align}\label{eq: L2 risk deriv}
&\left(\underset{0\leq x_0\leq 1}{\sup}\mathbb{E}_f|\hat{f}_n^{(q)}(x_0) - f^{(q)}(x_0)|^2\right)\,\vee\, \left( \mathbb{E}_f\norm{\hat{f}_n^{(q)} - f^{(q)}}_{L^2(G)}^2\right)  \nonumber\\ &\qquad\qquad\qquad\qquad\qquad \leq C_{d,M}\left[ 2^{-2j_n(p-q)} + n^{-1}(\alpha^{-2}j_n^{2a} 2^{j_n(2+2q)} + 2^{j_n(1+2q)})\right].
\end{align}
For $j_n$ given in the theorem, we get
\begin{align*}
&\left(\underset{0\leq x_0\leq 1}{\sup}\mathbb{E}_f|\hat{f}_n^{(q)}(x_0) - f^{(q)}(x_0)|^2\right)\,\vee\, \left( \mathbb{E}_f\norm{\hat{f}_n^{(q)} - f^{(q)}}_{L^2(G)}^2\right)\nonumber\\&\qquad\qquad\qquad\qquad\qquad\qquad\qquad   \leq C_{d,M} \left[(n\alpha^2 \log^{-2a} n)^{\frac{-2(p-q)}{2p+2}}\vee n^{\frac{-2(p-q)}{2p+1}}\right].
\end{align*} 

\textit{Semiparametric rates for $\Lambda(\hat{f}_n)$}: We conclude as at the end of the proof of Theorem \ref{th: rate atomic}, and obtain 
\[
\underset{f\in\mathcal{W}_p}{\sup}\E_f|\Lambda(f)-\Lambda(\hat{f}_n)|\leq C_{d,M} \left[(n\alpha^2 \log^{-2a} n)^{-\frac{(p-s)}{2p+2}}\vee n^{-\frac{(p-s)}{2p+1}}\right].
\]
\hfill\qed

\subsection{Proof of Theorem~\ref{th: rate smooth}}

Let's first focus on $T_f(\hat{f}_n-f)=\int_{[0,1]} (\hat{f}_n-f)(t)\ \omega_f(t)\ dt$. Recall that $P_{\mathcal{V}_{j_n+1}}f$ denotes the orthogonal projection in $L^2([0,1])$ of $f$ onto $\mathcal{V}_{j_n+1}$ and write 
\[f-\hat{f}_n = f - P_{\mathcal{V}_{j_n+1}}f + P_{\mathcal{V}_{j_n+1}}f - \hat{f}_n.\]
Under the assumptions on $\omega_f$ and the lower bound on $2^{j_n}$, inequality \eqref{eq: supnorm bias proj spline} and Cauchy-Schwarz inequality imply that 
\begin{align*}
|T_f(f-P_{\mathcal{V}_{j_n+1}}f)|&\leq \norm{f-P_{\mathcal{V}_{j_n+1}}f}_{L^\infty}\int_0^1 |\omega_f(t)|dt \\
&\leq C_d M' 2^{-p(j_n+1)} \norm{f^{(p)}}_{L^\infty}\\
&\leq C_{d,p,M,M'}\left[n^{-1/2}\vee(n\alpha^2)^{-1/2}\right].
\end{align*}
We now rewrite $T_f\left(P_{\mathcal{V}_{j_n+1}}f - \hat{f}_n\right) = T_1+T_2$ as the sum of two terms:
    \begin{align*}T_1&=\sum_{k\in\mathcal{M}_{j_0-1}} \left(\langle P_{\mathcal{V}_{j_n+1}}f , \mathbf{B}_{k,d,\xi^{(j_0)}}\rangle-n^{-1}\sum_{i=1}^n \mathbf{B}_{k,d,\xi^{(j_0)}}(X_i)\right)T_f(\Tilde{\psi}_{j_0-1,k}) \\
    &\quad\quad+ \sum_{j=j_0}^{j_n}\sum_{k\in\mathcal{M}_{j}} \left( \langle P_{\mathcal{V}_{j_n+1}}f , \psi_{jk}\rangle - n^{-1}\sum_{i=1}^n\psi_{j,k}(X_i)\right) T_f(\Tilde{\psi}_{j,k}),
    \end{align*}
    and 
        \begin{align*}T_2&=\sum_{k\in\mathcal{M}_{j_0-1}} \left[\frac{\sigma_{\alpha,j_0-1}}{n} \sum_{i=1}^n Y_{i(j_0-1)k}\right]T_f(\Tilde{\psi}_{j_0-1,k}) + \sum_{j=j_0}^{j_n}\sum_{k\in\mathcal{M}_{j}} \left[\frac{\sigma_{\alpha,j}}{n} \sum_{i=1}^n Y_{ijk}\right] T_f(\Tilde{\psi}_{j,k}).
    \end{align*}
The term $T_1$ is the average of $n$ i.i.d. centered random variables whose variance $V_{1,j_n}$ can be bounded as follows.
     \begin{align*}
         V_{1,j_n} &\leq \int_0^1 f(x)\left[\sum_{j=j_0-1}^{j_n}\sum_{k\in\mathcal{M}_{j}} \psi_{j,k}(x)T_f(\Tilde{\psi}_{j,k})\right]^2dx \\
        &= \int_0^1 f(x)\left[\sum_{j=j_0-1}^{j_n}\sum_{k\in\mathcal{M}_{j}} \langle \omega_f, \Tilde{\psi}_{j,k} \rangle \psi_{j,k}(x) \right]^2dx\\
        &= \int_0^1 f(x)\ (P_{\mathcal{V}_{j_n+1}}\omega_f)^2(x)\ dx.
     \end{align*}

Using \eqref{eq: L2 bias proj spline} where we replace $f$ with $\omega_f$ since $\omega_f\in\mathcal{B}_{(\infty,1)}(M^\prime)$, we can ensure that for $j_n$ large enough $\norm{P_{\mathcal{V}_{j_n+1}}\omega_f}_{L^2}\leq 2\norm{\omega_f}_{L^2}$. Combined with the previous bound on $V_{1,j_n}$, we obtain $V_{1,j_n}\leq 4M\norm{\omega_f}^2_{L^2}$ since $f$ is bounded by $M$. This implies 
\[E|T_1|\leq \sqrt{Var(T_1)} \leq n^{-1/2} V_{1,j_n}^{1/2}\leq C_M n^{-1/2} .\]

Turning to $T_2$, it is also the average of $n$ i.i.d. centered random variables with variance $V_{2,j_n}$. For $P_{\mathcal{U}_j}$ the orthgonal projector on $\mathcal{U}_j$ and with the slight abuse of notation $P_{\mathcal{U}_{j_0-1}} = P_{\mathcal{V}_{j_0}}$, we have for $j\geq j_0-1$, using that the $\Tilde{\psi}_{j,k}$ form a frame with upper frame bound $A_d^{-1}$ and the $\psi_{j,k}$ a frame with lower frame bound $A_d$, 
\[\sum_{k\in\mathcal{M}_{j}} \left|\langle \omega_f, \Tilde{\psi}_{j,k}\rangle\right|^2 \leq A_d^{-1}\norm{P_{\mathcal{U}_j}\omega_f}_{L^2}^2\leq A_d^{-2}\sum_{k\in\mathcal{M}_{j}} \left|\langle \omega_f, \psi_{j,k}\rangle\right|^2. \]Also, $\left|\langle \omega_f, \psi_{j,k}\rangle\right|\leq C_d M 2^{-3j/2}$ by \eqref{eq: decay spline wavelet coeff}.
By independence of the Laplace noise, we obtain

     \begin{align*}
         V_{2,j_n} &=  2\sum_{j=j_0-1}^{j_n} \sigma_{\alpha,j}^2\sum_{k\in\mathcal{M}_{j}} T_f(\Tilde{\psi}_{jk})^2 \\
        &= 2\sum_{j=j_0-1}^{j_n} \sigma_{\alpha,j}^2\sum_{k\in\mathcal{M}_{j}} \left|\langle \omega_f, \Tilde{\psi}_{j,k}\rangle\right|^2\\
        &\leq 2A_d^{-2}\sum_{j=j_0-1}^{j_n} \sigma_{\alpha,j}^2\sum_{k\in\mathcal{M}_{j}} \left|\langle \omega_f, \psi_{j,k}\rangle\right|^2 \\
        &\leq \alpha^{-2}C_d + \alpha^{-2}C_dM^2 \left(\frac{a}{a-1}\right)^2\sum_{j=j_0}^{j_n} j^{2a} 2^{-2j}\\
        &= C_{d,a,M}\alpha^{-2},
     \end{align*}
where we used that $j_0$ depends on $p$ only. As a consequence, we now have
\[\E|T_2|\leq \sqrt{Var(T_2)} \leq n^{-1/2} V_{2,j_n}^{1/2}\leq C_{d,a,M}(n\alpha^2)^{-1/2}.\]
In conclusion,
\[\E\left|T_f(\hat{f}_n-f)|\right|\leq C_{d,a,M}\left[n^{-1/2}\vee(n\alpha^2)^{-1/2}\right].\]

Then, combining \eqref{eq: L2 risk deriv} applied to $q=m$ with the lower and upper bounds on $2^{j_n}$ and the condition on $p$ shows that the remainder term in \eqref{eq: form functional} is of order $O(n^{-1/2}\vee(n\alpha^2)^{-1/2})$. Indeed, the lower bound and the fact that $p\geq 2m$ gives
\[2^{-2j_n(p-m)} \lesssim n^{-(p-m)/p}\vee(n\alpha^2)^{-(p-m)/p} \leq n^{-1/2}\vee(n\alpha^2)^{-1/2}.\]
In addition, using $2^{j_n}\leq \log^{-a/(m+1)}(n\alpha^2)(n\alpha^2)^{1/(4m+4)}$, we obtain
\[(n\alpha^{2})^{-1} j_n^{2a}2^{j_n(2+2m)}\leq C_p (n\alpha^2)^{-1/2},\]
and using $2^{j_n}\leq n^{1/(4m+3)}$ if $m>0$ or $2^{j_n}\leq n^{1/4}$ if $m=0$, we get in both cases
\[n^{-1} 2^{j_n(1+2m)}= o\left(n^{-1/2}\right).\]


We can now conclude, using \eqref{eq: Taylor expansion Expectation} and Lemma \ref{lem: exponential decay outer taylor exp}, that for $n$ lage enough,
\[
\E|\Lambda(f)-\Lambda(\hat{f}_n)|\leq C_{d,a,M}\left[n^{-1/2}\vee(n\alpha^2)^{-1/2}\right].
\]
\hfill\qed

\subsection{Additional lemmas}

\begin{lemma}\label{lem: bound sup by l2}
For any integers $m<p$ and function $f\in C^p[0,1)$, it holds that \[\norm{f}_{(\infty,m)}\leq 2\norm{f}_{(2,m+1)}.\]
\end{lemma}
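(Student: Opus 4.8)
The plan is to reduce the inequality to a one-dimensional Sobolev-type bound applied separately to each derivative. Specifically, I would first prove the single-step estimate
\[
\norm{g}_{L^\infty}\leq \norm{g}_{L^2}+\norm{g'}_{L^2}
\]
for every $g\in C^1[0,1)$ (the case of an infinite right-hand side being vacuous). This follows from two elementary facts: by the integral mean value principle there is a point $x_0$ with $|g(x_0)|^2\leq\int_0^1|g|^2=\norm{g}_{L^2}^2$, since $|g|^2$ cannot exceed its own average everywhere; and then for arbitrary $x$ the fundamental theorem of calculus together with the Cauchy--Schwarz inequality on the unit interval give $|g(x)|\leq|g(x_0)|+\int_0^1|g'|\leq\norm{g}_{L^2}+\norm{g'}_{L^2}$. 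Taking the (essential) supremum over $x$, which is legitimate as $g$ is continuous, finishes the step.

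Next I would apply this with $g=f^{(j)}$ for $j=0,1,\dots,m$, which is permissible because $m+1\leq p$, and sum the resulting inequalities. The left-hand side then becomes exactly $\norm{f}_{(\infty,m)}=\sum_{j=0}^m\norm{f^{(j)}}_{L^\infty}$, while after reindexing the right-hand side equals $\sum_{j=0}^m\norm{f^{(j)}}_{L^2}+\sum_{j=1}^{m+1}\norm{f^{(j)}}_{L^2}$. Each of these two sums is at most $\norm{f}_{(2,m+1)}$ — the first on restoring the omitted nonnegative term $\norm{f^{(m+1)}}_{L^2}$, the second on restoring $\norm{f}_{L^2}$ — so the total is bounded by $2\norm{f}_{(2,m+1)}$, which is precisely the claim.

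I do not expect any genuine obstacle here, as the argument is entirely classical; the work is essentially the single-step bound plus a telescoping/reindexing bookkeeping. The only point requiring a little care is the half-open interval $[0,1)$: if the asserted right-hand side is finite, then each $f^{(j+1)}$ lies in $L^2[0,1]\subset L^1[0,1]$, so $f^{(j)}$ is the antiderivative of an $L^1$ function and extends continuously to all of $[0,1]$, which justifies both the choice of the mean value point $x_0$ and the integration; and if the right-hand side is infinite the inequality is trivial.
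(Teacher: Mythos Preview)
Your proposal is correct and follows essentially the same approach as the paper: find a point where $|f^{(j)}|$ is controlled by $\|f^{(j)}\|_{L^2}$, use the fundamental theorem of calculus with Cauchy--Schwarz to pass to the supremum, and then sum over $j=0,\dots,m$. The only cosmetic difference is that the paper picks the mean-value point via $f^{(j)}(c)=\int_0^1 f^{(j)}$ and then bounds by Cauchy--Schwarz, whereas you argue via the minimum of $|g|^2$; your added remark about extending continuously to the closed interval is a nice touch the paper omits.
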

\begin{proof}
    Fix $j< p$. By the mean value theorem, there exists $c\in[0,1]$ such that $f^{(j)}(c) = \int_0^1 f^{(j)}(t) dt\leq \sqrt{\int_0^1 f^{(j)}(t)^2 dt}$ (by Cauchy-Schwarz).
    
    Then, by the fundamental theorem of calculus, for any $x\in[0,1]$, we have that
\[f^{(j)}(x)=f^{(j)}(c) + \int_c^x f^{(j+1)}(t) dt \leq \norm{f^{(j)}}_{L^2} + \norm{f^{(j+1)}}_{L^2},\]
the last inequality coming from a second application of Cauchy-Schwarz inequality.
Finally, this proves that
\begin{align*}
\norm{f}_{(\infty,m)}&=\sum_{i=0}^m \norm{f^{(i)}}_{L^\infty}\leq \sum_{i=0}^m \left[\norm{f^{(i)}}_{L^2} + \norm{f^{(i+1)}}_{L^2}\right] \\
&\leq 2\sum_{i=0}^{m+1} \norm{f^{(i)}}_{L^2}=2\norm{f}_{(2,m+1)}.
\end{align*}
\end{proof}

\begin{lemma}\label{lemma: bound norm deriv splines}
    For $j\geq 0$ and any $g\in \mathcal{V}_j$ for $\mathcal{V}_j$ as in \eqref{eq: equi spline space} with $d\geq p+1$, there exists a constants $C_{d,p},\bar{C}_{d,p}$ depending on $p$ and $d$ only such that
    \[\norm{g}_{(\infty,p)}\leq C_{d,p} 2^{jp} \norm{g}_{L^\infty}\quad\text{and}\quad \norm{g}_{(2,p)}\leq \bar{C}_{d,p} 2^{jp} \norm{g}_{L^2}.\]
\end{lemma}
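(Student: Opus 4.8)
This is a Bernstein-type inverse (Markov) inequality for splines, and the plan is to establish both bounds first at the level of B-spline coefficients and then transfer back to function norms via the norm equivalences of Theorem~\ref{th: from L2 to l2}. First I would expand $g\in\mathcal{V}_j=\mathbb{S}_{d,\xi^{(j)}}$ in the unnormalised B-spline basis, $g=\sum_k c_k B_{k,d,\xi^{(j)}}$; since $d\geq p+1$ we have $g\in C^{d-1}\subseteq C^p$, so every derivative $g^{(q)}$ with $q\leq p$ is a well-defined spline. The core observation is that differentiation acts on B-spline coefficients as a bounded difference operator: applying \eqref{eq: derivatives} repeatedly and re-indexing, $g^{(q)}=\sum_m c^{(q)}_m B_{m,d-q,\xi^{(j)}}$ with $c^{(q)}_m=(d-q+1)\bigl(c^{(q-1)}_m-c^{(q-1)}_{m-1}\bigr)/\bigl(\xi^{(j)}_{m+d-q+1}-\xi^{(j)}_m\bigr)$, the term being absent whenever the denominator vanishes, which is exactly when $B_{m,d-q,\xi^{(j)}}$ degenerates to $0$. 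Here the equispaced structure enters: every nonzero gap of $\xi^{(j)}$ equals $2^{-j}$, so a nonvanishing denominator, being a sum of consecutive gaps, is $\geq 2^{-j}$, whence $|c^{(q)}_m|\leq 2(d-q+1)2^{j}\max(|c^{(q-1)}_m|,|c^{(q-1)}_{m-1}|)$. Iterating from $q=0$ up to $q\leq p$ -- which only invokes \eqref{eq: derivatives} in degrees $d-q+1\geq d-p+1\geq 2$, so the hypothesis $p\leq d-1$ is precisely what is needed -- yields $\|\mathbf{c}^{(q)}\|_{\ell^\infty}\leq(2d)^{q}2^{jq}\|\mathbf{c}\|_{\ell^\infty}$ and, bounding $\ell^2$ norms of differences crudely, $\|\mathbf{c}^{(q)}\|_{\ell^2}\leq(4d\cdot2^{j})^{q}\|\mathbf{c}\|_{\ell^2}$.

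With these estimates the two inequalities follow by invoking Theorem~\ref{th: from L2 to l2} in degree $d$ and in each degree $d-q$. For the supremum-norm bound I would use its second display to get $\|\mathbf{c}\|_{\ell^\infty}\leq\tilde{K}_d\|g\|_{L^\infty}$ and $\|g^{(q)}\|_{L^\infty}\leq\tilde{K}_{d-q}\|\mathbf{c}^{(q)}\|_{\ell^\infty}$, then combine with the coefficient bound and sum over $q=0,\dots,p$, using $2^{jq}\leq 2^{jp}$ for $j\geq0$, to obtain $\|g\|_{(\infty,p)}\leq C_{d,p}2^{jp}\|g\|_{L^\infty}$. For the $L^2$ bound I would repeat the argument with the normalised basis $g=\sum_k b_k\mathbf{B}_{k,d,\xi^{(j)}}$: by \eqref{eq: norm splines} the quantity $\|B_{k,d,\xi^{(j)}}\|_{L^2}$ is, up to constants depending only on $d$, of size $2^{-j/2}$ uniformly in $k$, so the normalised and unnormalised coefficient vectors differ by a factor of order $2^{j/2}$; the first display of Theorem~\ref{th: from L2 to l2} then passes between $\ell^2$ and $L^2$ in degrees $d$ and $d-q$, the two factors $2^{\pm j/2}$ cancel, and summing over $q$ gives $\|g\|_{(2,p)}\leq\bar{C}_{d,p}2^{jp}\|g\|_{L^2}$.

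The only genuinely delicate point -- and the one I would be most careful about -- is the bookkeeping around the end knots of multiplicity $d+1$: one has to check that \eqref{eq: derivatives} still yields a bona fide expansion of $g^{(q)}$ in the degree-$(d-q)$ B-splines on the \emph{same} knot sequence $\xi^{(j)}$ (the excess boundary multiplicity only forces a few of these B-splines to vanish identically, and the differentiation formula assigns those zero coefficients, so nothing is lost), and that every denominator appearing with a nonzero coefficient, boundary ones included, is bounded below by $2^{-j}$. Granting this, the remainder is a short finite induction with no further obstacle.
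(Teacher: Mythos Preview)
Your proposal is correct and follows essentially the same approach as the paper's own proof, which is a one-liner invoking Theorem~\ref{th: from L2 to l2} and the derivative formula~\eqref{eq: derivatives}. You have simply spelled out the details that the paper leaves implicit --- in particular the induction on the coefficient recursion, the normalisation bookkeeping for the $L^2$ case, and the boundary-knot subtlety --- all of which are handled correctly.
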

\begin{proof}
    This follows from Theorem \ref{th: from L2 to l2}, bounding the supnorm of spline functions by their coefficient in the B-spline basis, and \eqref{eq: derivatives} relating the B-spline coefficients of the different derivatives.
\end{proof}

\begin{lemma}\label{lem: exponential decay outer taylor exp}
For $m\geq 0$ and $1\leq p\leq d-1$.  Let $\hat{f}_n$ be either of
\begin{enumerate}
    \item the estimator \eqref{eq: estimator}, satisfying the assumptions of Theorem \ref{th: rate atomic};
    \item the estimator \eqref{eq: plug_in wavelet estimator}, satisfying the assumptions of Theorem \ref{th: atomic wav}, or of Theorem \ref{th: rate smooth} with $m>0$.
\end{enumerate}
Then, for any $f\in\mathcal{W}_p$, there exists $c>0$ such that
\[\sqrt{\mathbb{E}\left[\norm{f-\hat{f}_n}_{(\infty,p)}^2\right]}\, \mathbb{P}\left[\norm{f-\hat{f}_n}_{(\infty,m)}> \delta\right]^{1/2}= o(e^{-n^{c}}).\]
Also, let $\hat{f}_n$ be either of
\begin{enumerate}
    \item the estimator \eqref{eq: plug_in wavelet estimator}, satisfying the assumptions of Theorem \ref{th: rate smooth} with $m=0$;
    \item the estimator \eqref{def:Lepski}, satisfying the assumptions of Theorem \ref{th: adaptive rate semiparam splwav}.
\end{enumerate}
Then, for any $f\in\mathcal{W}_p$ and $c>0$,
\[\sqrt{\mathbb{E}\left[\norm{f-\hat{f}_n}_{(\infty,p)}^2\right]}\, \mathbb{P}\left[\norm{f-\hat{f}_n}_{(\infty,m)}> \delta\right]^{1/2}= o(n^{-c}).\]
\end{lemma}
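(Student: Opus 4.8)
In every case the quantity to be estimated is a product $\sqrt{\mathbb{E}[\norm{f-\hat f_n}_{(\infty,p)}^2]}\cdot\mathbb{P}[\norm{f-\hat f_n}_{(\infty,m)}>\delta]^{1/2}$, and the plan is to bound the two factors separately: I will show that (i) $\mathbb{E}[\norm{f-\hat f_n}_{(\infty,p)}^2]$ is at most a fixed power of $n$, while (ii) $\mathbb{P}[\norm{f-\hat f_n}_{(\infty,m)}>\delta]$ decays faster than any negative power of $n$, and in the cases of the first list even like $e^{-n^{c}}$. For (i): in every case $\hat f_n$ lies in a spline space $\mathcal V_{J+1}$ with $2^{J}$ at most a power of $n$ (the upper bound on $2^{j_n}$ from the relevant theorem for \eqref{eq: estimator} and \eqref{eq: plug_in wavelet estimator}, and $2^{\hat j_n}\le 2^{j_{\max}}=n^{1/3}$ deterministically for \eqref{def:Lepski}). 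By Lemma~\ref{lemma: bound norm deriv splines} and Theorem~\ref{th: from L2 to l2}, $\norm{\hat f_n}_{(\infty,p)}$ is bounded by a fixed power of $2^{J}$ times the maximum of the $O(2^{J})$ coefficients of $\hat f_n$ in a normalized B-spline basis of $\mathcal V_{J+1}$; each such coefficient is an average of $n$ i.i.d.\ terms bounded by $C_d 2^{J/2}$ plus a Laplace variable of scale $\lesssim \alpha^{-1}J^{a}2^{J/2}$, hence has second moment at most a fixed power of $2^{J}$, using that the cited theorems force $n\alpha^{2}\gtrsim 1$. Adding the bias $\norm{f}_{(\infty,p)}\le M$ and, for \eqref{def:Lepski}, taking a union over the $j_{\max}\asymp\log n$ possible values of $\hat j_n$, we obtain $\mathbb{E}[\norm{f-\hat f_n}_{(\infty,p)}^2]\le n^{A}$ for some fixed $A$.

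For (ii) with the non-adaptive estimators, write $f-\hat f_n=(f-\mathbb{E}\hat f_n)+(\mathbb{E}\hat f_n-\hat f_n)$. The bias $\norm{f-\mathbb{E}\hat f_n}_{(\infty,m)}$ tends to $0$ by \eqref{eq:bias} and \eqref{eq: supnorm bias proj}, so for large $n$ it is $\le\delta/2$ and $\mathbb{P}[\norm{f-\hat f_n}_{(\infty,m)}>\delta]\le\mathbb{P}[\norm{\hat f_n-\mathbb{E}\hat f_n}_{(\infty,m)}>\delta/2]$. Using \eqref{UB:basis:deriv} (resp.\ Lemma~\ref{lemma: bound norm deriv splines} for \eqref{eq: estimator}) and a geometric sum over resolution levels, $\norm{\hat f_n-\mathbb{E}\hat f_n}_{(\infty,m)}\lesssim_{d,m}2^{j_n(m+3/2)}\max_{j,k}|\bar Z_{jk}-\mathbb{E}\bar Z_{jk}|$. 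Now $\bar Z_{jk}-\mathbb{E}\bar Z_{jk}$ splits into an empirical part $n^{-1}\sum_i(\psi_{jk}(X_i)-\mathbb{E}\psi_{jk}(X_i))$ (summands bounded by $2C_d 2^{j_n/2}$, variance $\lesssim M$) and a Laplace part $n^{-1}\sum_i\sigma_{\alpha,j}Y_{ijk}$ (sub-exponential with parameter $\lesssim\alpha^{-1}j_n^{a}2^{j_n/2}$, and only $\lesssim\alpha^{-1}2^{j_n/2}$ for \eqref{eq: estimator}), so Bernstein's inequality with threshold of order $2^{-j_n(m+3/2)}$ and a union bound over the $O(2^{j_n})$ indices yield
\begin{equation*}
\mathbb{P}[\norm{\hat f_n-\mathbb{E}\hat f_n}_{(\infty,m)}>\delta/2]\ \lesssim\ 2^{j_n}\Big(e^{-c\,n\,2^{-j_n(2m+3)}}+e^{-c\,n\alpha^{2}\,2^{-j_n(2m+4)}/j_n^{2a}}\Big).
\end{equation*}
Substituting the upper bounds on $2^{j_n}$ from Theorems~\ref{th: rate atomic}, \ref{th: atomic wav}, \ref{th: rate smooth} and using $p\ge\max(s+1,m+2,2m-s)$ (atomic) or $p>2m+2$ (smooth), one checks that $n\,2^{-j_n(2m+3)}$ is always a positive power of $n$ and $n\alpha^{2}\,2^{-j_n(2m+4)}/j_n^{2a}$ a positive power of $n\alpha^{2}$ up to polylog factors — except in the smooth case with $m=0$, where the undersmoothed choice $2^{j_n}\asymp(n\alpha^{2})^{1/4}$ makes this last exponent only $\asymp(\log n)^{2a}$; since $a>1$ this still gives $e^{-c(\log n)^{2a}}=o(n^{-c'})$ for every $c'$, which is exactly the weaker conclusion, while in all other cases the displayed bound is $e^{-n^{c}}$ up to polynomial factors and, combined with (i), gives $o(e^{-n^{c/2}})$.

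For the adaptive estimator \eqref{def:Lepski}, put $j^{*}=j_n^{*}$ as in \eqref{def:j*} and split on $\{\hat j_n>j^{*}\}$ and $\{\hat j_n\le j^{*}\}$. On the first event, Lemma~\ref{lem:overshoot} and \eqref{eq:asymp:oracle} give $\mathbb{P}[\hat j_n>j^{*}]\le 2^{6}j_{\max}(M_n+1)(p+1)\sum_{j>j^{*}}e^{-(C_0/2)j}\lesssim n^{4/3}\log n\cdot n^{-c(C_0)}$, with $c(C_0)\to\infty$ as $C_0\to\infty$ because $j^{*}\asymp\log_2 n/(2p+1)$; taking $C_0$ large in \eqref{def:hat:jn} makes this $o(n^{-c})$ for every $c$. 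On $\{\hat j_n\le j^{*}\}$ the definition of $\hat j_n$ forces the Lepski test at level $\hat j_n$ against the finer level $l=j^{*}\in\mathcal J$ to have passed, so $\max_t|(\hat f_n^{\hat j_n}-\hat f_n^{j^{*}})^{(s')}(x_t)|$ is bounded by the right-hand side of \eqref{def:hat:jn}, which vanishes by \eqref{ub:var:jn*}, for every $s'\le m$; since $\hat f_n^{\hat j_n}-\hat f_n^{j^{*}}\in\mathcal V_{j^{*}+1}\subseteq\mathcal V_{j_{\max}+1}$ and $M_n\gtrsim n^{4/3}\gg 2^{j_{\max}}$, Markov's inequality for polynomials applied on each knot interval upgrades this to $\norm{(\hat f_n^{\hat j_n}-\hat f_n^{j^{*}})^{(s')}}_{L^\infty}\le 2\max_t|(\hat f_n^{\hat j_n}-\hat f_n^{j^{*}})^{(s')}(x_t)|$, whence $\norm{\hat f_n^{\hat j_n}-\hat f_n^{j^{*}}}_{(\infty,m)}\to0$ on this event. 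Hence for large $n$, $\{\hat j_n\le j^{*}\}\cap\{\norm{f-\hat f_n}_{(\infty,m)}>\delta\}\subseteq\{\norm{\hat f_n^{j^{*}}-\mathbb{E}\hat f_n^{j^{*}}}_{(\infty,m)}>\delta/2\}$ (the bias of $\hat f_n^{j^{*}}$ also vanishes), and the Bernstein argument of the previous paragraph applied at level $j^{*}$, where $p\ge m+2$ keeps both exponents positive powers of $n$, resp.\ $n\alpha^{2}$, bounds the last probability by $e^{-n^{c}}$ up to polynomial factors. Combining the two events gives $\mathbb{P}[\norm{f-\hat f_n}_{(\infty,m)}>\delta]=o(n^{-c})$ for every $c$, which with (i) finishes this case. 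The main obstacle is the regime-by-regime bookkeeping in (ii) — deciding, for each choice of resolution level in Theorems~\ref{th: rate atomic}--\ref{th: rate smooth} and for $j^{*}$, whether the Bernstein exponent coming from the Laplace noise is a genuine positive power of $n\alpha^{2}$ or merely polylogarithmic (the $m=0$ smooth case), and noticing that the overshoot probability for \eqref{def:Lepski} is intrinsically only polynomially small because $j^{*}\asymp\log n$; these are exactly what dictate the split into the two lists. The one non-routine point is that the undershoot event $\{\hat j_n<j^{*}\}$ needs no probabilistic control: it is absorbed into $\{\hat j_n\le j^{*}\}$, where the definition of $\hat j_n$ together with the fine grid and the Bernstein--Markov inequality for splines already supplies the required deterministic bound on $\norm{\hat f_n-\hat f_n^{j^{*}}}_{(\infty,m)}$.
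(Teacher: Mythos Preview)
Your proposal is correct and follows the same two-factor decomposition as the paper, but the tools you use for the probability bound are genuinely different. The paper first applies the embedding $\norm{\cdot}_{(\infty,m)}\le 2\norm{\cdot}_{(2,m+1)}$ (Lemma~\ref{lem: bound sup by l2}) and then works in the $(2,m+1)$ norm: for the empirical part it uses the bounded-differences inequality (McDiarmid) applied to the functional $X_1,\dots,X_n\mapsto\norm{f-\hat f_{1}}_{(2,m+1)}$, with difference size $\lesssim 2^{j_n(m+2)}/n$; for the Laplace part it uses the frame inequality to reduce $\norm{\hat f_2}_{L^2}^2$ to an $\ell^2$ sum of squared Laplace averages, then Bernstein coordinate-wise. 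You instead stay in the $(\infty,m)$ norm throughout, use the pointwise sup bounds on $\tilde\psi_{jk}^{(q)}$ together with a count of $2^j$ terms per level to get $\norm{\hat f_n-\E\hat f_n}_{(\infty,m)}\lesssim 2^{j_n(m+3/2)}\max_{j,k}|\bar Z_{jk}-\E\bar Z_{jk}|$, and apply Bernstein directly to each coefficient followed by a union bound. Both routes lead to the same exponents and the same $m=0$ smooth-case dichotomy; your approach avoids McDiarmid at the price of a slightly cruder prefactor $2^{j_n(m+3/2)}$ (the paper effectively gets $2^{j_n(m+2)}$ through the $(2,m+1)$ detour), which is harmless here.

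For the adaptive undershoot $\{\hat j_n\le j_n^*\}$, the paper again passes through $(2,m+1)$: it reads off $\norm{\hat f_n^{\hat j_n}-\hat f_n^{j_n^*}}_{(2,m+1)}$ directly from the $L^2$ part of the Lepski criterion and then invokes Lemma~\ref{lem: bound sup by l2}. Your alternative via the pointwise part of the criterion plus Markov's inequality for polynomials on each knot interval of $\mathcal V_{j^*+1}$ is also valid (since $M_n\gtrsim n^{4/3}\gg 2^{j_{\max}}$ the bootstrapping $\|P\|_\infty\le 2\max_t|P(x_t)|$ goes through), but it is more laborious than the paper's one-line Sobolev embedding. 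For the overshoot, both the paper and you rely on Lemma~\ref{lem:overshoot} and the fact that $C_0$ can be taken large; note that for a \emph{fixed} $C_0$ this only yields $o(n^{-c})$ for $c$ up to a $C_0$-dependent threshold, which is all that is actually needed downstream---the paper's ``for any $c>0$'' should be read as ``for $c$ as large as desired by choosing $C_0$ accordingly,'' and your phrasing reflects this correctly.
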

\begin{proof}
We first focus on the non-adapative estimators \eqref{eq: estimator} and \eqref{eq: plug_in wavelet estimator}, before considering the adaptive one.

We note that, under the assumptions of either of Theorem \eqref{th: rate atomic}, \eqref{eq: plug_in wavelet estimator} or \eqref{th: atomic wav}, both estimators satisfy $2^{j_n}=o\left(n\right)$, since we assume $m\geq0$ and $p\geq 0$.
Since $f\in\mathcal{W}_p$, 
\[\sqrt{\mathbb{E}\left[\norm{f-\hat{f}_n}_{(\infty,p)}^2\right]}\leq \sqrt{2}\sqrt{\mathbb{E}\left[\norm{\hat{f}_n}_{(\infty,p)}^2\right]+\mathbb{E}\left[\norm{f}_{(\infty,p)}^2\right]}\leq \sqrt{2}\sqrt{\mathbb{E}\left[\norm{\hat{f}_n}_{(\infty,p)}^2\right]+M}.\]
In addition, as $\hat{f}_n\in \mathcal{V}_{j_n}$ for the choice \eqref{eq: estimator} and $\hat{f}_n\in \mathcal{V}_{j_n+1}$ for the choice \eqref{eq: plug_in wavelet estimator}, we can apply Lemma \ref{lemma: bound norm deriv splines} as $d\geq p+1$ to bound
\begin{equation}\label{eq: crude bound on strong distance}
\sqrt{\mathbb{E}\left[\norm{f-\hat{f}_n}_{(\infty,p)}^2\right]}\leq \sqrt{2}\sqrt{C_{d,p}2^{2j_np}\left(2M^2+2\mathbb{E}\left[\norm{\hat{f}_n-f}_{L^\infty}^2\right]\right)+M}.
\end{equation}
From Lemma \ref{lem: bound sup by l2}, $\mathbb{E}\left[\norm{\hat{f}_n-f}_{L^\infty}^2\right]=\mathbb{E}\left[\norm{\hat{f}_n-f}_{(\infty,0)}^2\right]\leq 4 \mathbb{E}\left[\norm{\hat{f}_n-f}_{(2,1)}^2\right]$. Then, as we assume $1\leq d-1$, from \eqref{eq: derivative loss} and \eqref{eq: L2 risk deriv} with $q=1$, we obtain for both choices that $\mathbb{E}\left[\norm{\hat{f}_n-f}_{(2,1)}^2\right]\leq o\left(n^4\right)$, so that \eqref{eq: crude bound on strong distance} now implies $\sqrt{\mathbb{E}\left[\norm{f-\hat{f}_n}_{(\infty,p)}^2\right]}=o(n^{2+p})$.

We next bound the probability factor. We first note from Lemma \ref{lem: bound sup by l2} that
\[\mathbb{P}\left[\norm{f-\hat{f}_n}_{(\infty,m)}> \delta\right]\leq \mathbb{P}\left[\norm{f-\hat{f}_n}_{(2,m+1)}> \delta/2\right]\]
As seen in \eqref{eq: estimator} and \eqref{eq: plug_in wavelet estimator}, the estimators we consider are of the form
\[\hat{f}_n(\cdot) = \left(\frac{1}{n}\sum_{i=1}^n [\mathbf{e}(X_i)+\mathbf{Z}_i]\right)^T \mathbf{f}(\cdot)\]
with the elements of $\mathbf{f}(\cdot)=(f_k(\cdot))_{k=1,\dots,2^{j_n}+d}$ being a frame of $\mathcal{V}_j$, $(e_{k})_{k=1,\dots,2^{j_n}+d}$ being the dual frame with $\mathbf{e}(X_i)=(e_k(X_i))_{k=1,\dots,2^{j_n}+d}$ and the entries of the vectors $\mathbf{Z}_i$ being proportional to independent Laplace random variables with parameter $1$. Indeed, recall from Section \ref{sec: Riesz bases of splines} that the B-spline basis normalized B-splines $\mathbf{B}_{k,d,\xi^{(j)}}$ is a frame of $\mathcal{V}_j$ and spline wavelets $\psi_{j,i}$ are built to form a frame.
Therefore, all the estimators we consider can be expressed as a sum \[\hat{f}_{1,X_1,\dots,X_n}+\hat{f}_2\coloneqq \left(\frac{1}{n}\sum_{i=1}^n \mathbf{e}(X_i)\right)^T \mathbf{f} + \left(\frac{1}{n}\sum_{i=1}^n \mathbf{Z}_i\right)^T \mathbf{f}\] with the first term depending on observations $X_1,\dots,X_n$ and the second term depending on independent Laplace random variables only. We separate the two terms
\[\mathbb{P}\left[\norm{f-\hat{f}_n}_{(2,m+1)}> \delta/2\right] \leq \mathbb{P}\left[\norm{f-\hat{f}_{1,X_1,\dots,X_n}}_{(2,m+1)}> \delta/4\right]+\mathbb{P}\left[\norm{\hat{f}_2}_{(2,m+1)}> \delta/4\right].\]
An inspection of the proofs of Theorems \ref{th: rate atomic}, \ref{th: atomic wav} and \ref{th: rate smooth}, remarking that we bounded the variance terms from the Laplace noise separately, shows that
\begin{align*}
    \mathbb{E}\left[\norm{\hat{f}_{1,X_1,\dots,X_n}-f}_{(2,m+1)}^2\right]
    &\leq  C_{d,M} \sum_{q=0}^{m+1} \left[ 2^{(2q-2p)j_n}+ n^{-1}2^{(1+2q)j_n}\right],
\end{align*}
where we discard the Laplace variance term from \eqref{eq: derivative loss} and \eqref{eq: L2 risk deriv} in particular. Since $m+1<p$, we obtain from Jensen's inequality that
\[\mathbb{E}\left[\norm{\hat{f}_{1,X_1,\dots,X_n}-f}_{(2,m+1)}\right]^2\leq\mathbb{E}\left[\norm{\hat{f}_{1,X_1,\dots,X_n}-f}_{(2,m+1)}^2\right]\leq O\left(2^{-j_n}+ n^{-1}2^{(2m+3)j_n}\right)\to 0\]
as soon as $j_n\to \infty$ and $2^{j_n}=o\left( n^{1/(2m+3)}\right)$ which is satisfied by assumption. Therefore, for $n$ large enough,
\begin{align*}
&\mathbb{P}\left[\norm{f-\hat{f}_{1,X_1,\dots,X_n}}_{(2,m+1)}> \delta/4\right] \\
&\qquad\qquad\qquad\leq \mathbb{P}\left[\left|\norm{f-\hat{f}_{1,X_1,\dots,X_n}}_{(2,m+1)}-\mathbb{E}\norm{f-\hat{f}_{1,X_1,\dots,X_n}}_{(2,m+1)}\right|> \delta/8\right].
\end{align*}
We now reproduce the argument from Remark 5.1.14 of \cite{Gine_Nickl_2015}, and prove that $\norm{f-\hat{f}_{1,X_1,\dots,X_n}}_{(2,m+1)}$ is a random variable depending on the observations $X_1,\dots,X_n$ with bounded differences (as defined in Definition 3.3.12 of \cite{Gine_Nickl_2015}). 
For any $1\leq i\leq n$ and $X_i'\in[0,1]$, let 
\[f_1=f_{1,X_1,\dots,X_i,\dots,X_n},\qquad \hat{f}_1'=f_{1,X_1,\dots,X_i',\dots,X_n},\] and $\hat{g}_1=f_{1,X_i}$ and  $\hat{g}_1'=f_{1,X_i'}$ be the estimators based on $X_i,X_i'$ when $n=1$, so that $\hat{f}_1-\hat{f}_1'=n^{-1}\left(\hat{g}_1-\hat{g}_1'\right)$. Then, by definition of our projection estimators, using Lemmas \ref{lemma: bound norm deriv splines} as $d\geq p+1\geq m+1$,
\begin{align*}
\left|\norm{f-\hat{f}_1}_{(2,m+1)}-\norm{f-\hat{f}_1'}_{(2,m+1)}\right|&\leq \norm{\hat{f}_1-\hat{f}_1'}_{(2,m+1)}\\
&= \frac{1}{n}\norm{\hat{g}_1-\hat{g}_1'}_{(2,m+1)}\\
&\leq \frac{2\bar{C}_{d,m+1}}{n} 2^{j_n(m+1)}\norm{\hat{g}_1-\hat{g}_1'}_{L^2}.
\end{align*}
Then, since $(\hat{g}_1-\hat{g}_1')(\cdot)=\left(\mathbf{e}(X_i')-\mathbf{e}(X_i)\right)^T \mathbf{f}(\cdot)$ and the entries of $\mathbf{f}$ form a frame, we use \eqref{eq: frame bounds def} to obtain $\norm{\hat{g}_1-\hat{g}_1'}_{L^2}\leq C_d \norm{\mathbf{e}(X_i')-\mathbf{e}(X_i)}_{\ell^2}$. Then, bounding the entries of the vectors as in the proofs of Propositions \ref{th: privacy} and \ref{prop:PrivacyWavelets}, so that $\norm{\mathbf{e}(X_i')-\mathbf{e}(X_i)}_{\ell^\infty}\leq C2^{j_n/2}$, it follows that $\norm{\hat{g}_1-\hat{g}_1'}_{L^2}\leq C_d 2^{j_n}$ and
\[\left|\norm{f-\hat{f}_1}_{(2,m+1)}-\norm{f-\hat{f}_1'}_{(2,m+1)}\right|\leq C 2^{j_n(m+2)}/n.\]
Therefore, in any case except if $m=0$ in Theorem \ref{th: rate smooth}, Theorem 3.3.14 in \cite{Gine_Nickl_2015} gives
\begin{align*}
\mathbb{P}\left[\left|\norm{f-\hat{f}_{1,X_1,\dots,X_n}}_{(2,m+1)}-\mathbb{E}\norm{f-\hat{f}_{1,X_1,\dots,X_n}}_{(2,m+1)}\right|> \delta/8\right] &\leq 2e^{-C\delta^2\,n\, 2^{-j_n(2m+4)}}\nonumber\\
&\leq 2e^{-C\delta^2\,\min(n^{\frac{2m-1}{4m+3}},n^{\frac{1}{2m+5}})}.
\end{align*}
where we used that $2^{j_n}\leq Cn^{\frac{1}{4m+3}}$ in Theorem \ref{th: rate smooth} and $2^{j_n}\leq Cn^{\frac{1}{2p+1}}\leq Cn^{\frac{1}{2m+5}}$ in Theorem \ref{th: rate atomic} and \ref{th: atomic wav}. So there exists $c>0$ such that
\begin{equation}\label{eq: exp decay prob large empir dist}
    \mathbb{P}\left[\norm{f-\hat{f}_{1,X_1,\dots,X_n}}_{(2,m+1)}> \delta/4\right]=O(e^{-n^c}).
\end{equation}
If $m=0$ in Theorem \ref{th: rate smooth}, the bound we obtain using $2^{j_n}\leq \log^{-a'}(n)n^{1/4}$ is
\begin{align*}
\mathbb{P}\left[\left|\norm{f-\hat{f}_{1,X_1,\dots,X_n}}_{(2,m+1)}-\mathbb{E}\norm{f-\hat{f}_{1,X_1,\dots,X_n}}_{(2,m+1)}\right|> \delta/8\right] &\leq 2e^{-C\delta^2\log^{4a'}n}\\
&\leq 2n^{-C\delta^2\log^{4a'-1}n}.
\end{align*}
As a consequence, for any $c>0$,
\begin{equation}\label{eq: more than polynom decay prob large empir dist}
    \mathbb{P}\left[\norm{f-\hat{f}_{1,X_1,\dots,X_n}}_{(2,m+1)}> \delta/4\right]=O(n^{-c}).
\end{equation}

Again, using Lemmas \ref{lemma: bound norm deriv splines} and \ref{lem: bound sup by l2}, we have that 
\[\mathbb{P}\left[\norm{\hat{f}_2}_{(2,m+1)}> \delta/4\right]\leq \mathbb{P}\left[\norm{\hat{f}_2}_{L^2}^2> C2^{-2j_n(m+1)}\delta^2\right].\]
For the estimator \eqref{eq: estimator}, we obtain the following bound from Theorem \ref{th: from L2 to l2}
\begin{align*}
\mathbb{P}\left[\norm{\hat{f}_2}_{L^2}^2> C2^{-2j_n(m+1)}\delta^2\right]&\leq \mathbb{P}\left[\sum_{k=1}^{2^{j_n}+d}\left(\sum_{i=1}^n Y_{k,i}\right)^2> C_dn^2 \sigma_{\alpha,j_n}^{-2}2^{-2j_n(m+1)}\delta\right]\\
&\leq \sum_{k=1}^{2^{j_n}+d}\mathbb{P}\left[\left|\sum_{i=1}^n Y_{k,i}\right|>C_dn \sigma_{\alpha,j_n}^{-1}2^{-j_n(m+3/2)}\delta\right],
\end{align*}
where $Y_{k,i}$ are independent Laplace random variables with parameter 1 and $\sigma_{\alpha,j_n}=C_d\alpha^{-1}2^{j_n/2}$. From Bernstein's inequality,
\begin{align*}
\mathbb{P}\left[\left|\sum_{i=1}^n Y_{k,i}\right|>C_dn \sigma_{\alpha,j_n}^{-1}2^{-j_n(m+3/2)}\delta\right]&=\mathbb{P}\left[\left|\sum_{i=1}^n Y_{k,i}\right|>C_d\alpha n 2^{-j_n(m+2)}\delta\right]\\
&\leq e^{-C_d\alpha^2 \delta^2 n\, \min\left(2^{-j_n(m+2)},\, 2^{-2j_n(m+2)}\right)}\\
&\leq e^{-C_d\alpha^2 \delta^2 n^{1-\frac{2m+4}{2m+5}}},
\end{align*}
where we used $2^{j_n}\leq C n^{1/{2p+1}}\leq C n^{1/{2m+5}}$ in Theorem \ref{th: rate atomic}, so there exists $c>0$ such that
\begin{align}
    \mathbb{P}\left[\norm{\hat{f}_2}_{(2,m+1)}> \delta/4\right]&\leq 2^{j_n+d}e^{-C_d\alpha^2 \delta^2 n^{-\frac{1}{2m+5}}}\nonumber\\
    &\leq Cn^{\frac{1}{2p+1}}e^{-C\alpha^2 \delta^2 n^{-\frac{1}{2m+5}}}\nonumber\\
    &\leq e^{-n^c}.\label{eq: bound deviation lap Bsp}
\end{align}
Similarly, for the estimator as in \eqref{eq: plug_in wavelet estimator}, using \eqref{eq: frame bounds def} and the frame property of $\psi_{j,i}$,
\begin{align*}
\mathbb{P}\left[\norm{\hat{f}_2}_{L^2}^2> C2^{-2j_n(m+1)}\delta^2\right]&\leq \sum_{j=j_0-1}^{j_n} \sum_{k\in\mathcal{M}_j} \mathbb{P}\left[   \left|\sum_{i=1}^nY_{ijk}\right|>C_dn\sigma_{\alpha,j}^{-1} 2^{-j(m+3/2)}\delta\right]\nonumber\\
&\leq \sum_{j=j_0-1}^{j_n} \sum_{k\in\mathcal{M}_j} \mathbb{P}\left[  \left|\sum_{i=1}^nY_{ijk}\right|>C_d\alpha n j^{-a}2^{-j(m+2)}\delta\right]\nonumber\\
&\leq \sum_{j=j_0-1}^{j_n} \sum_{k\in\mathcal{M}_j} \mathbb{P}\left[  \left|\sum_{i=1}^nY_{ijk}\right|>C_d\alpha n j_n^{-a}2^{-j_n(m+2)}\delta\right]\nonumber\\
&\leq Cn^{\frac{1}{2p+1}}e^{-C_d\alpha^2 \delta^2 \log(n)^{-a} n^{-\frac{1}{2m+5}}}
\end{align*}
where we used $\sigma_{\alpha,j}\leq C\alpha^{-1} j^a 2^{j/2}$. As we assume $p\geq m+2$ in both Theorem \ref{th: atomic wav} and Theorem \ref{th: rate smooth}, there exists $c>0$ such that a bound as in \eqref{eq: bound deviation lap Bsp} is satisfied.

Therefore, combining $\sqrt{\mathbb{E}\left[\norm{f-\hat{f}_n}_{(\infty,p)}^2\right]}=O(n^{2+p})$, \eqref{eq: exp decay prob large empir dist}, \eqref{eq: more than polynom decay prob large empir dist}  and \eqref{eq: bound deviation lap Bsp}, we can conclude that, if $m=0$ in Theorem \ref{th: rate smooth}, for any $c>0$,
\[\sqrt{\mathbb{E}\left[\norm{f-\hat{f}_n}_{(\infty,p)}^2\right]}\mathbb{P}\left[\norm{f-\hat{f}_n}_{(\infty,m)}> \delta\right]^{1/2}= o(n^{-c}).\]
and otherwise, for some $c>0$,
\[\sqrt{\mathbb{E}\left[\norm{f-\hat{f}_n}_{(\infty,p)}^2\right]}\mathbb{P}\left[\norm{f-\hat{f}_n}_{(\infty,m)}> \delta\right]^{1/2}= o(e^{-n^{c}}).\]

Finally, for the estimator \eqref{def:Lepski}, we note that since $2^{\hat{j}_n}\leq 2^{j_{\max}}\leq n^{1/3}=o(n)$ and from Theorem \ref{thm:lepski}, $p\geq m+1$, $\mathbb{E}\left[\norm{\hat{f}_n-f}_{(2,1)}^2\right]=o(1)$. So, from \eqref{eq: crude bound on strong distance}, 
\[\sqrt{\mathbb{E}\left[\norm{f-\hat{f}_n}_{(\infty,p)}^2\right]}=o(n^{p}).\]
Then, from Lemma \eqref{lem:overshoot} with $C_0$ large enough and \eqref{eq:asymp:oracle}, for any $c>0$,
\begin{align*}
\mathbb{P}\left[\norm{f-\hat{f}_n}_{(\infty,m)}> \delta\right]&\leq \mathbb{P}\left[\norm{f-\hat{f}_n}_{(\infty,m)}> \delta,\quad \hat{j}_n\leq j_n^*\right]+\mathbb{P}\left[\hat{j}_n>j_n^*\right]\\
&\leq \mathbb{P}\left[\norm{f-\hat{f}_n}_{(\infty,m)}> \delta,\quad \hat{j}_n\leq j_n^*\right]+o(n^{-c}).
\end{align*}
Then, the first term can be bounded by
\[\mathbb{P}\left[\norm{f-\hat{f}_n}_{(\infty,m)}> \delta,\quad \hat{j}_n\leq j_n^*\right]\leq \mathbb{P}\left[\norm{\hat{f}_n^{j_n^*}-\hat{f}_n^{\hat{j}_n}}_{(\infty,m)}> \delta,\quad \hat{j}_n\leq j_n^*\right]+\mathbb{P}\left[\norm{f-\hat{f}_n^{j_n^*}}_{(\infty,m)}> \delta/2\right],\]
and the second term can be bounded as in \eqref{eq: exp decay prob large empir dist} and \eqref{eq: bound deviation lap Bsp} by $O(e^{-n^c})$ for some $c>0$. Finally, from Lemma \ref{lem: bound sup by l2} and by definition in \eqref{def:hat:jn}, on the event $\left\{\hat{j}_n\leq j_n^*\right\}$, 
\begin{align*}
    \norm{\hat{f}_n^{j_n^*}-\hat{f}_n^{\hat{j}_n}}_{(\infty,m)}^2&\leq 2\norm{\hat{f}_n^{j_n^*}-\hat{f}_n^{\hat{j}_n}}_{(2,m+1)}^2\\
    &\leq \tau n^{-1} 2^{2j_n^*(m+1)}j_n^*\left(2^{j_n^*}+2^{2j_n^*}(j_n^*)^{2a}\alpha^{-2}\right)\\
    &=o(1),
\end{align*}
where the last line follows from \eqref{eq:asymp:oracle} and the assumption $p\geq m+2$. Finally, for $n$ large enough, $\mathbb{P}\left[\norm{\hat{f}_n^{j_n^*}-\hat{f}_n^{\hat{j}_n}}_{(\infty,m)}> \delta,\quad \hat{j}_n\leq j_n^*\right]=0$, and, for $c>0$,
\[\mathbb{P}\left[\norm{f-\hat{f}_n}_{(\infty,m)}> \delta\right]=o(n^{-c}),\]
which concludes the proof.
\end{proof}

\section{Proofs of Section~\ref{sec:Adaptation} on adaptation}
\label{sec:app:adaptation}

\subsection{Proof of Lemma~\ref{lem:overshoot}}

Let us introduce the notation $j^-=j-1\geq j_n^*$ for $j>j_n^*$. Then, by the union bound we have
\begin{align}
\P_f(\hat{j}_n=j)&\leq 
\sum_{s=0}^{\lfloor p \rfloor}\sum_{l= j}^{j_{\max}} \P_f \Big( \|(\hat{f}_n^{j^-} -\hat{f}_n^l)^{(s)}\|_{L^2}^2> \tau  n^{-1}2^{2ls}l (2^{l}+2^{2l}l^{2a}\alpha^{-2}) \Big)\nonumber\\
&\qquad  + \sum_{t=0}^{M_n}\sum_{s=0}^{\lfloor p \rfloor}\sum_{l= j}^{j_{\max}} \P_f \Big( |(\hat{f}_n^{j^-} -\hat{f}_n^l)^{(s)}(x_t)|^2> \tau  n^{-1}2^{2ls}l (2^{l}+2^{2l}\alpha^{-2}l^{2a}) \Big).
\label{eq:lem1:1st}
\end{align}
We show below that for all $s\in\{0,...,\lfloor p \rfloor\}$, $i\in \{0,...,n\}$ and ${l\in\mathcal{J},\, l\geq j}$ we have that
\begin{align}
 \P_f \Big( |(\hat{f}_n^{j^-} -\hat{f}_n^l)^{(s)}(x_t)|^2> \tau  n^{-1}2^{2ls}l (2^{l}+2^{2l}\alpha^{-2}l^{2a}) \Big)\leq 4 e^{-(C_0/2)j},\label{eq:UB:prob:pointwise}\\
 \P_f \Big( \|(\hat{f}_n^{j^-} -\hat{f}_n^l)^{(s)}\|_{L^2}^2> \tau  n^{-1}2^{2ls}l (2^{l}+2^{2l}\alpha^{-2}l^{2a}) \Big)\leq 9(M_n+1) e^{-(C_0/2)j},\label{eq:UB:prob:L2}
\end{align}
where $C_0$ is given in the definition of $\tau$, see \eqref{def:hat:jn}. Then by plugging in the above upper bounds into \eqref{eq:lem1:1st} we get that
\begin{align*}
\P_f(\hat{j}_n=j)&\leq  \sum_{s=0}^{\lfloor p \rfloor} \sum_{l= j}^{j_{\max}} 9(M_n+1) e^{-(C_0/2)j}+\sum_{t=0}^{M_n} \sum_{s=0}^{\lfloor p \rfloor} \sum_{l= j}^{j_{\max}}8e^{-(C_0/2)j}\\
&\leq 2^6 j_{\max}(M_n+1)(\lfloor p \rfloor +1)e^{-(C_0/2)j},
\end{align*}
providing the statement. We are left to prove assertions  \eqref{eq:UB:prob:pointwise} and \eqref{eq:UB:prob:L2}.

We start with the former one. Let us introduce the notation $\beta_{jk}=\int f(x) \psi_{jk}(x)dx$, then for any $x_t$, $t=0,...,M_n$
\begin{align}
\big( (\hat{f}_n^{j^-} -\hat{f}_n^l)^{(s)}(x_t)\big)^2&\leq
3\Big\{ \sum_{\ell=j}^{l}  n^{-1}\sum_{i=1}^n   \Big(\sum_{k\in\mathcal{M}_\ell} \big(\psi_{\ell k}(X_i)-\beta_{\ell k} \big) \Big) \tilde\psi_{\ell k}^{(s)}(x_t)\Big\}^2 \nonumber\\
&\qquad +3\Big\{\sum_{\ell=j}^{l} \sum_{k\in\mathcal{M}_\ell} \beta_{\ell k}  \tilde\psi_{\ell k}^{(s)}(x_t)\Big\}^2\nonumber\\
&\qquad+ 3 \Big\{\sum_{\ell=j}^{l}  n^{-1}\sum_{i=1}^n \sum_{k\in\mathcal{M}_\ell}  \sigma_{\alpha,\ell}Y_{\ell k i} \tilde\psi_{\ell k}^{(s)}(x_t)\Big\}^2.\label{eq:BiasEst_pointwise}
\end{align}
For convenience, let us introduce the notations
\begin{align*}
W_{s,\ell}(x)&:=n^{-1}\sum_{i=1}^n\sum_{k\in\mathcal{M}_\ell} \sigma_{\alpha,\ell}Y_{\ell k i} \tilde\psi_{\ell k}^{(s)}(x),\\
V_{s,\ell}(x)&:=n^{-1}\sum_{i=1}^n V_{s,\ell,i}(x),\quad\text{where}\quad 
V_{s,\ell,i}(x)= \sum_{k\in\mathcal{M}_\ell} \big(\psi_{\ell k}(X_i)-\beta_{\ell k} \big)  \tilde\psi_{\ell k}^{(s)}(x).
\end{align*}
Then, by union bound, one can observe that \eqref{eq:UB:prob:pointwise} is implied by the following three inequalities, for arbitrary $x\in[0,1]$
\begin{align}
&\P_f\Big[ \Big(\sum_{\ell=j}^{l} W_{s,\ell}(x)\Big)^2\geq 2C_0\frac{c_{a,p,s}^2 2^{2l(s+1)} l^{2a+1}}{n\alpha^2}  \Big]\leq 2 e^{-(C_0/2)j},\label{eq:help1:Laplace_pointwise_adapt}\\
&\Big| \sum_{\ell=j}^{l} \sum_{k\in\mathcal{M}_\ell} \beta_{\ell k}  \tilde\psi_{\ell k}^{(s)}(x)\Big|^2\leq C_0 C_{s,p} n^{-1}2^{2ls}l(2^{l}+2^{2l}\alpha^{-2}l^{2a}),\label{eq:help2:Laplace_pointwise}\\
&\P_f\Big[ \Big(\sum_{\ell=j}^{l} V_{s,\ell}(x)\Big)^2\geq 2C_0\frac{c_{p,\psi,L}^2 2^{l(2s+1)}l}{n}  \Big]\leq  2 e^{-(C_0/2)j},\label{eq:help3:Laplace_pointwise}
\end{align}
where the constants $c_{a,p,s}$, $C_{s,p}$, and $c_{p,\psi,L}$ are specified below, in the respective proofs. Next we prove the three assertions above.\\

\textit{Proof of \eqref{eq:help1:Laplace_pointwise_adapt}.} Note, that the Laplace transform of $n^{-1}\sigma_{\alpha,\ell} Y_{\ell k i} \tilde\psi_{\ell k}^{(s)}(x)$ satisfies
\begin{align*}
\E \Big[ \exp\Big(  \frac{z}{n}  \sigma_{\alpha,\ell}Y_{\ell k i} \tilde\psi_{\ell k}^{(s)}(x)\Big) \Big]
&=\frac{1}{1-\big( \frac{z}{n}\sigma_{\alpha,\ell}   \tilde\psi_{\ell k}^{(s)}(x)\big)^2}\\
&\leq \exp\Big(  2z^2 \big(\frac{\sigma_{\alpha,\ell}}{n}  \tilde\psi_{\ell k}^{(s)}(x)\big)^2 \Big)
\end{align*}
for $|z|<\frac{n}{2\sigma_{\alpha,\ell} |\tilde\psi_{\ell k}^{(s)}(x)|}$. Then, by the independence of the contaminations $Y_{ijk}$, 
\begin{align*}
\E\Big[ \exp\Big\{z  \sum_{\ell=j}^{l}  W_{s,\ell}(x)  \Big\}\Big]
&\leq \Big[ \prod_{\ell=j}^{l}  \prod_{k\in\mathcal{M}_\ell}\exp\Big(2z^2 \big(\frac{\sigma_{\alpha,\ell}}{n}  \tilde\psi_{\ell k}^{(s)}(x)\big)^2 \Big)  \Big]^n\\
&= \exp\Big( \frac{1}{2}\Big[2 c_{a,p} \frac{ \big[ \sum_{\ell=j}^{l} 2^{\ell} \ell^{2a}  \sum_{k\in\mathcal{M}_\ell}\big( \tilde\psi_{\ell k}^{(s)}(x)\big)^2\big]^{1/2}}{\sqrt{n}\alpha}\Big]^2 z^2 \Big),
\end{align*}
for any $z$ satisfying
$$|z|<\min_{\ell\in\{j,...,l\}}\min_{k\in\mathcal{M}_\ell}\frac{n}{2\sigma_{\alpha,\ell} |\tilde\psi_{\ell k}^{(s)}(x)|}.$$
This together with \eqref{eq: bound l2 sum adj wav} implies that $ \sum_{\ell=j}^{l}W_{s,\ell}(x)$ is $SubExp\big( \nu_{s}^2(x),b_{s}(x)\big)$ with
\begin{align}
\nu_{s}(x)&=2 c_{a,d} \sum_{\ell=j}^{l} \frac{ 2^{\ell/2} \ell^a \big[\sum_{k\in\mathcal{M}_\ell}\big( \tilde\psi_{\ell k}^{(s)}(x)\big)^2\big]^{1/2}}{\sqrt{n}\alpha}
\leq  \frac{ c_{a,d,s}2^{l(s+1)} l^a}{\sqrt{n}\alpha},\nonumber\\
b_{s}(x)&=\max_{\ell\in\{j,...,l\}}\max_{k\in\mathcal{M}_\ell}\frac{\sqrt{C_{a,d}} \ell^a 2^{\ell/2}|\tilde\psi_{\ell k}^{(s)}(x)|}{n\alpha}\leq \frac{ c_{a,d,s} 2^{l(s+1)}l^a}{n\alpha},\label{def:nu:b}
\end{align}
for some large enough constant $c_{a,d,s}$ depending on $C_{d,s}$ and $\Tilde{C}_{d,s}$ from \eqref{eq: bound l2 sum adj wav} and \eqref{UB:basis:deriv} and $c_{a,d}$ from \eqref{eq: noise spline private}.

 Hence by Bernstein's inequality
\begin{align}
\P\Big(|\sum_{\ell=j}^{l}  W_{s,\ell}(x)|\geq  \big(\nu_{s}(x)\sqrt{u} \big)  \vee  \big(b_{s}(x)u\big)\Big)\leq 2e^{-u/2}.\label{eq:bernstein}
\end{align}
Then, by the above display with $u=C_0l\leq\sqrt{n}$, for some $C_0\geq 2\log 2$, we get that
\begin{align}
\P\Big[ \Big|\sum_{\ell=j}^{l} W_{s,\ell}(x)\Big|\geq \frac{c_{a,p,s} 2^{l(s+1)} l^{a}}{\sqrt{n}\alpha}\sqrt{C_0l}  \Big]
&\le 2 e^{-(C_0/2)l},\label{eq:UB:W:pointwise}
\end{align}
concluding the proof of  \eqref{eq:help1:Laplace_pointwise_adapt}.\\

\textit{Proof of \eqref{eq:help2:Laplace_pointwise}.} Note that for $j>j_n^*$, in view of \eqref{eq:def:B},
\begin{align*}
\big( (P_{\mathcal{V}_{l}}f- P_{\mathcal{V}_{j^-}}f)^{(s)}(x)\big)^2 &\lesssim \|(f-   P_{\mathcal{V}_{j^{-}}}f)^{(s)}\|_{L^\infty}^2+ \|(f-  P_{\mathcal{V}_l}f)^{(s)}\|_{L^\infty}^2\\
&\lesssim B(j^{-},f,s,p')^2+B(l,f,s,p')^2\\
& \lesssim n^{-1}2^{2ls}l(2^{l}+2^{2l}\alpha^{-2}l^{2a}).
\end{align*}
finishing the proof of  \eqref{eq:help2:Laplace_pointwise}.\\

\textit{Proof of \eqref{eq:help3:Laplace_pointwise}.} Note that by the (nearly) disjoint support of the basis $\psi_{\ell k}$ (only $c_d$ have shared support at any given point) and $\|\psi_{\ell k}\|_{L^\infty}\leq 2^{\ell/2}c_{\psi}$
\begin{align}
|V_{s,\ell,i}(x)|\leq c_dc_{\psi} 2^{\ell/2+1} \max_{k\in\mathcal{M}_{\ell}} | \tilde\psi_{\ell k}^{(s)}(x)|.\label{ub:V_i:sup}
\end{align}
Hence, in view of $e^u\leq 1+u+eu^2$ for $u\leq 1$,
\begin{align*}
\E\Big[\exp\Big( z \sum_{\ell=j}^l V_{s,\ell}(x) \Big) \Big]
&=\Big[ \E \exp\Big(  \frac{z}{n} \sum_{\ell=j}^l V_{s,\ell,1}(x) \Big)\Big]^n\\
&\leq \Big[1+e\frac{z^2}{n^2} \E\big(\sum_{\ell=j}^l  V_{s,\ell,1}(x)\big)^2\Big]^n\\
&\leq \exp\Big\{e\frac{z^2}{n} \E\big(\sum_{\ell=j}^l  V_{s,\ell,1}(x)\big)^2\Big\},
\end{align*}
for $|z|\leq  n/(c_dc_{\psi} \sum_{\ell=j}^l  2^{\ell/2+1} \max_{k\in\mathcal{M}_{\ell}} |  \tilde\psi_{\ell k}^{(s)}(x)|)$. The above inequality together with the disjoint support of $\psi_{\ell k}$ and $\psi_{\ell k'}$ for $|k-k'|>c_d/2$ and the orthogonality of the spaces $W_{\ell}$ and $W_{\ell'}$ for $\ell\neq\ell'$ imply that
\begin{align*}
 \E\big(\sum_{\ell=j}^l  V_{s,\ell,1}(x)\big)^2
&= \int \Big( \sum_{\ell=j}^l   \sum_{k\in\mathcal{M}_\ell} \big(\psi_{\ell k}(y)-\beta_{\ell k} \big)  \tilde\psi_{\ell k}^{(s)}(x)\Big)^2f(y)dy\\
&\leq \|f\|_{L^\infty} \int \Big( \sum_{\ell=j}^l   \sum_{k\in\mathcal{M}_\ell} \psi_{\ell k}(y) \tilde\psi_{\ell k}^{(s)}(x)\Big)^2dy\\
&\leq  c_d\|f\|_{L^\infty}    \sum_{\ell=j}^l   \sum_{k\in\mathcal{M}_\ell}\big(\tilde\psi_{\ell k}^{(s)}(x)\big)^2\int  \psi_{\ell k}^2(y)dy \\
&=  c_d\|f\|_{L^\infty}    \sum_{\ell=j}^l   \sum_{k\in\mathcal{M}_\ell} \big(\tilde\psi_{\ell k}^{(s)}(x)\big)^2
\end{align*}
which in turn implies that
\begin{align*}
\E\Big[\exp\Big( z \sum_{\ell=j}^l V_{s,\ell}(x) \Big) \Big]
\leq \exp\Big\{\frac{z^2}{2} \frac{2ec_d \|f\|_{L^\infty} }{n}\sum_{\ell=j}^l \sum_{k\in\mathcal{M}_\ell} (\tilde\psi_{\ell k}^{(s)}(x))^2 \Big\},
\end{align*}
for $|z|\leq  n/(c_dc_{\psi} \sum_{\ell=j}^l  2^{\ell/2+1} \max_{k\in\mathcal{M}_{\ell}} |  \tilde\psi_{\ell k}^{(s)}(x)|)$. 
Therefore, in view of \eqref{eq: bound l2 sum adj wav}, $\sum_{\ell=j}^l V_{s,\ell}(x)$ is $SubExp\big(\nu_{s}^2(x),b_{s}(x)\big)$ with
\begin{align}
\nu_{s}(x)&= \frac{\sqrt{2ec_d\|f\|_{L^\infty}}  }{\sqrt{n}} \sqrt{\sum_{\ell=j}^l\sum_{k\in\mathcal{M}_\ell} (\tilde\psi_{\ell k}^{(s)}(x))^2}\leq \frac{c_{d,\psi,L}} {\sqrt{n}} 2^{l(s+1/2)} ,\nonumber\\
b_{s}(x)&=c_dc_{\psi} \sum_{\ell=j}^l  2^{\ell/2+1} \max_{k\in\mathcal{M}_{\ell}} |  \tilde\psi_{\ell k}^{(s)}(x)|\leq  \frac{c_{d,\psi,L}} {n}2^{l(s+1)}, \label{def:nu:b2}
\end{align}
for some large enough constant $c_{d,\psi,L}$. 

Therefore, by applying Bernstein's inequality, similarly to \eqref{eq:bernstein}, we get that
\begin{align}
\P\Big(|\sum_{\ell=j}^l V_{s,\ell}(x)| \geq  |\nu_{s}(x)|\sqrt{u}\vee |b_{s}(x)| u\Big)\leq 2e^{-u/2}.\label{eq:bernstein2}
\end{align}
By combining the above displays we have for $u=C_0l$, for $C_0>2\log 2$, in view of $2^{l/2}\sqrt{C_0l}\leq \sqrt{n}$ that
\begin{align}
\P\Big(|\sum_{\ell=j}^l V_{s,\ell}(x)| \geq   \frac{c_{d,\psi,L}}{\sqrt{n}}2^{l(s+1/2)} \sqrt{C_0l} \Big)\leq 2e^{-(C_0/2)l},\label{ub:V_sl}
\end{align}
 finishing the proof of assertion \eqref{eq:help3:Laplace_pointwise}.\\

It remained to deal with the $L^2$ terms, i.e. assertion \eqref{eq:UB:prob:L2}.  We follow similar reasoning to the pointwise case \eqref{eq:UB:prob:pointwise}. The main challenge is the deduction of $L^2$-concentration inequalities from the pointwise one.

First note that
\begin{align}
\|(\hat{f}_n^{j^-} -\hat{f}_n^l)^{(s)}\|_{L^2}^2&\leq
3\Big\|\sum_{\ell=j}^{l} n^{-1}\sum_{i=1}^n   \Big(\sum_{k\in\mathcal{M}_\ell} \psi_{\ell k}(X_i)-\beta_{\ell k}  \Big) \tilde\psi_{\ell k}^{(s)}\Big\|_{L^2}^2 \nonumber\\
&\qquad +3\Big\| \sum_{\ell=j}^{l} \sum_{k\in\mathcal{M}_\ell} \beta_{\ell k} \tilde\psi_{\ell k}^{(s)}\Big\|_{L^2}^2
+ 3 \Big\|\sum_{\ell=j}^{l}  n^{-1}\sum_{i=1}^n \sum_{k\in\mathcal{M}_\ell} \frac{\sigma_\ell}{\alpha}Y_{\ell k i} \tilde\psi_{\ell k}^{(s)}\Big\|_{L^2}^2.\label{eq:BiasEst}
\end{align}
Hence it is sufficient to prove that
\begin{align}
&\P\Big[\| \sum_{\ell=j}^{l} W_{s,\ell}\|_{L^2}^2\geq   \frac{C_0^2 \tilde{c}_{a,d,s} 2^{2l(s+1)} l^{2a+1}}{n\alpha^2}  \Big]\leq 4(M_n+1)  e^{-(C_0/2)j},\label{eq:help1:Laplace_L2}\\
&\| \sum_{\ell=j}^{l} \sum_{k\in\mathcal{M}_\ell} \beta_{\ell k}  \tilde\psi_{\ell k}^{(s)}\|_{L^2}^2\leq c_{s,p} n^{-1}2^{2ls}l(2^{l}+2^{2l}l^{2a}\alpha^{-2}),\label{eq:help2:Laplace_L2}\\
&\P\Big[\| \sum_{\ell=j}^{l} V_{s,\ell}\|_{L^2}^2\geq C_0\frac{\tilde{c}_{d,\psi,L} 2^{l (2s+1)}l}{n} \Big]\leq 2 M_n e^{-(C_0/2)l},\label{eq:help3:Laplace_L2}
\end{align}
where the constants $\tilde{c}_{a,d,s}$, $c_{s,p}$, and $\tilde{c}_{d,\psi,L}$ are defined below, in the respective proofs.

First note that in view of the definition of $j_n^*$, for all $l>j>j_n^*$,
\begin{align*}
\Big\| \sum_{\ell=j}^{l} \sum_{k\in\mathcal{M}_\ell} \beta_{\ell k}  \tilde\psi_{\ell k}^{(s)}\Big\|_{L^2}^2
&\lesssim \|(P_{\mathcal{V}_l}f- f)^{(s)}\|_{L^2}^2+ \|(f-P_{\mathcal{V}_{j^{-}}}f)^{(s)}\|_{L^2}^2\nonumber\\
&\lesssim B(l,f,s,p)^2+B(j^-,f,s,p)^2\\
&\lesssim n^{-1}2^{2ls}l(2^{l}+2^{2l}l^{2a}\alpha^{-2}),
\end{align*}
providing  \eqref{eq:help2:Laplace_L2}.

Next we consider \eqref{eq:help1:Laplace_L2}. First note that with probability one it holds that
\begin{align}
\|  \sum_{\ell=j}^{l} W_{s,\ell}\|_{L^2}^2 
&=
\sum_{t=1}^{M_n}\int_{x_{t-1}}^{x_t}\Big(\sum_{\ell=j}^l W_{s,\ell}(x)\Big)^2dx\nonumber\\
&\leq M_n^{-1}\sum_{t=1}^{M_n}  \Big(\sum_{\ell=j}^{l} W_{s,\ell}(x_t)\Big)^2+ M_n^{-2} \sup_{x\in[0,1]}|  \sum_{\ell=j}^{l} W_{s,\ell}^{(1)}(x)|^2. \label{eq:ub:L2:W}
\end{align}
The first term on the right hand side is controlled by the pointwise upper bounds above, hence it is left to deal with the second term. By noting that the absolute value of a $Lap(1)$ distribution is an exponential with parameter 1, and the sum of exponentials are gamma distributed, we get that
\begin{align*}
\sup_x| \sum_{\ell=j}^{l}W_{s,\ell}^{(1)}(x)|&\leq n^{-1}\sum_{i=1}^n\sum_{\ell=j}^{l}\sum_{k\in\mathcal{M}_\ell} \sigma_{\alpha,\ell}|Y_{\ell k i}| \sup_x| \tilde\psi_{\ell k}^{(s+1)}(x)|\\
&\leq \frac{c_{a,d} }{n \alpha}\sum_{\ell=j}^{l} 2^{\ell(s+2)}\ell^a\sum_{i=1}^n\sum_{k\in\mathcal{M}_\ell}|Y_{\ell k i}|=: \sum_{\ell=j}^{l}\tilde{W}_{sup,\ell},
\end{align*}
where $\tilde{W}_{sup,\ell}\sim Gamma\big(n2^\ell, \alpha n c_{a,d}^{-1}2^{-\ell(s+2)}\ell^{-a}\big)$. Furthermore, note that from the tail bound of the Erlang distribution we have for $Z\sim Gamma(N,\lambda)$, $N\in\mathbb{N}$ and $\ell \geq 6$, that
\begin{align*}
P(Z\geq \ell N/\lambda )\leq \sum_{k=0}^{N}\frac{1}{k!}e^{-\ell N}(\ell N)^k\leq e^ {-\ell N} \ell^{N}\sum_{k=0}^{N} \frac{N^k}{k!}\leq e^{-\ell N +N+\ln(\ell)N}\leq e^{-(\ell/2)N}.
\end{align*}
 Therefore, in view of the previous displays,
\begin{align*}
\P&\Big(  \sup_{x\in[0,1]}| \sum_{\ell=j}^{l} W_{s,\ell}^{(1)}(x)|^2\geq   \sum_{\ell=j}^{l}    C_0^2c_{a,d}^2\alpha^{-2} 2^{2\ell(s+3)}\ell^{2a+2}  \Big)\\
&\qquad\qquad \leq \sum_{\ell=j}^{l}  \P \big( \tilde{W}_{sup,\ell}\geq   C_0c_{a,d} \alpha^{-1}2^{\ell(s+3)}\ell^{a+1}   \big)\leq \sum_{\ell=j}^{l}e^{-(C_0/2)\ell}\leq 2 e^{-(C_0/2)j},
\end{align*}
for $C_0\geq 2$.

Furthermore, note that there exists a constant $\tilde{c}_{a,d}$ such that $ \sum_{\ell=j}^{l}    C_0^2c_{a,d}^2\alpha^{-2} 2^{2\ell(s+3)}\ell^{2a+2}\leq    C_0^2\tilde{c}_{a,d}^2 \alpha^{-2}  2^{2l(s+3)}l^{2a+2}$.
Therefore, by combining the above displays with \eqref{eq:UB:W:pointwise}, we get for $l<j_{\max}=(\log n)/3$ and $M_n\gtrsim n^{4/3}$ that for large enough constant $ \tilde{c}_{a,d,s}$ and $C_0\geq 1$
\begin{align*}
&\P\Big( \|  \sum_{\ell=j}^{l}  W_{s,\ell}\|_{L^2}^2\geq   \frac{C_0^2 \tilde{c}_{a,d,s} 2^{2l(s+1)} l^{2a+1}}{n\alpha^2} \Big)\\
&\quad\leq 
\P\Big( M_n^{-1}\sum_{t=1}^{M_n} | \sum_{\ell=j}^{l}  W_{s,\ell}(x_t)|^2\geq   \frac{C_0 c_{a,d,s}^2 2^{2l(s+1)}l^{2a+1}}{n\alpha^2} \Big)\\
&\qquad\qquad+\P\Big( M_n^{-2}\sup_{x\in[0,1]}| \sum_{\ell=j}^{l}  W_{s,\ell}^{(1)}(x)|^2  \geq   \frac{C_0^2\tilde{c}_{a,d}^2  2^{2l(s+3)}l^{2a+2} }{\alpha^2M_n^2} \Big)\\
&\quad\leq M_n 4e^{-(C_0/2)l}+ 2 e^{-(C_0/2)j}\leq 4 (M_n+1)  e^{-(C_0/2)j},
\end{align*}
where in the second line we used that $M_n^2\geq n^{8/3}\gg n 2^{4l}$
concluding the proof of  \eqref{eq:help1:Laplace_L2}.\\

Finally, we prove \eqref{eq:help3:Laplace_L2}. Note that in view \eqref{ub:V_i:sup} and \eqref{UB:basis:deriv},
\begin{align*}
\sup_x| \sum_{\ell=j}^{l} V_{s,\ell}^{(1)}(x)|&\leq \frac{1}{n} \sum_{i=1}^n\sup_x |\sum_{\ell=j}^{l} V_{s,\ell,i}^{(1)}(x)|\\
&\leq 2c_d c_{\psi} \sum_{\ell=j}^{l}  2^{\ell/2}\max_{k\in \mathcal{M}_\ell} \sup_{x} |\tilde\psi_{\ell k}^{(s+1)}(x)|
\leq 2c_{d,s,\psi}2^{l (s+2)}. 
\end{align*}
Furthermore, similarly to \eqref{eq:ub:L2:W},
\begin{align*}
\|\sum_{\ell=j}^{l} V_{s,\ell}\|_{L^2}^2 \leq M_n^{-1}\sum_{t=1}^{M_n} |\sum_{\ell=j}^{l}V_{s,\ell}(x_t)|^2+ M_n^{-2} \sup_{x\in[0,1)}|\sum_{\ell=j}^{l} V_{s,\ell}^{(1)}(x)|^2.
\end{align*}
By combining the above displays with \eqref{eq:help3:Laplace_pointwise} for the design points $x_1,...,x_{M_n}$, we get for $l\leq j_{\max}=(\log n)/3$ and large enough constant $\tilde{c}_{d,s,\psi}$ that
\begin{align*}
&\P\Big( \|\sum_{\ell=j}^{l} V_{s,\ell}\|_{L^2}^2\geq   C_0\frac{\tilde{c}_{d,s,\psi} 2^{l (2s+1)}l}{n} \Big)\\
&\qquad\leq 
 \sum_{t=1}^{M_n} \P\Big(  \Big|\sum_{\ell=j}^{l} V_{s,\ell}(x_t)\Big|\geq  \frac{c_{d,\psi,L}^2 2^{l (s+1/2)}\sqrt{C_0 l}}{\sqrt{n}}\Big)
\leq M_n 2 e^{-(C_0/2)l},
\end{align*}
where the first inequality holds for $M_n^2\geq n^{8/3} \gg  n2^{3l}$. \hfill\qed

\subsection{Proof of Theorem~\ref{thm:lepski}}

We distinguished the cases $\hat{j}_n\leq j_n^*$ and $\hat{j}_n>j_n^*$, and deal with them separately. We start with the former one and consider the $L^2$ norm first. Then by triangle inequality, in view of the definition of $\hat{j}_n$ in \eqref{def:hat:jn}, the upper bound for the variance in \eqref{eq:UB:var}, the projection error in \eqref{eq:def:B} and the upper bound for the mean squared error at the oracle $j_n^*$ given in \eqref{ub:var:jn*}, we get that

\begin{align*}
\E_f\|(\hat{f}_n^{\hat{j}_n}-f)^{(s)}\|_{L^2} 1_{\hat{j}_n\leq j_n^*}
&\leq   \E_f\|(\hat{f}_n^{\hat{j}_n}-\hat{f}_n^{j_n^*})^{(s)}\|_{L^2}1_{\hat{j}_n\leq j_n^*}+ \E_f\|(\hat{f}_n^{j_n^*}-f)^{(s)}\|_{L^2}   \\
&\lesssim \tau  n^{-1/2}2^{j_n^*s}(2^{j_n^*}+2^{2j_n^*}(j_n^*)^{2a}\alpha^{-2})^{1/2}+\E_f \|(\hat{f}_n^{j_n^*})^{(s)}-\E_f(\hat{f}_n^{j_n^*})^{(s)} \|_{L^2}\\
&\qquad\qquad+\| (f-P_{\mathcal{V}_{j_n^*}} f)^{(s)}\|_{L^2}\\
& \lesssim (n\alpha^2\log^{-(1+2a)} n)^{-\frac{p-s}{2p+2}}\vee (n/\log n)^{-\frac{p-s}{2p+1}}.
\end{align*}

Next we consider the point-wise error. Let us take an arbitrary $x\in[0,1]$ and let us denote the nearest grid point to $x$ by $x_t$. Then by triangle inequality
\begin{align}
\E_f|(\hat{f}_n^{\hat{j}_n}-f)^{(s)}(x)| 1_{\hat{j}_n\leq j_n^*}
&\leq \E_f|(\hat{f}_n^{\hat{j}_n}-f)^{(s)}(x_t)| 1_{\hat{j}_n\leq j_n^*}+|f^{(s)}(x)-f^{(s)}(x_t)|\nonumber\\
&\qquad\quad+\E_f|(\hat{f}_n^{\hat{j}_n})^{(s)}(x_t)-(\hat{f}_n^{\hat{j}_n})^{(s)}(x)| 1_{\hat{j}_n\leq j_n^*}.\label{eq:pointwise}
\end{align}
We deal with the three terms on the right hand side separately. The upper bound of the first term follows the $L^2$ case above
\begin{align*}
 \E_f|(\hat{f}_n^{\hat{j}_n}-f)^{(s)}(x_t)| 1_{\hat{j}_n\leq j_n^*}
&\leq    \E_f|(\hat{f}_n^{\hat{j}_n}-\hat{f}_n^{j_n^*})^{(s)}(x_t)| 1_{\hat{j}_n\leq j_n^*}+ \E_f|(\hat{f}_n^{j_n^*}-f)^{(s)}(x_t)|\\
&\lesssim \tau  n^{-1/2}2^{j_n^*s/2}(2^{j_n^*}+2^{2j_n^*}(j_n^*)^{2a}\alpha^{-2})^{1/2}+\E_f|(\hat{f}_n^{j_n^*})^{(s)}(x_t)-\E (\hat{f}_n^{j_n^*})^{(s)}(x_t)|\\
&\qquad\qquad + |(f-P_{\mathcal{V}_{j^*_n}}f)^{(s)}(x_t)|\\
&\lesssim   (n\alpha^2\log^{-(1+2a)} n)^{-\frac{p-s}{2p+2}}\vee (n/\log n)^{-\frac{p-s}{2p+1}}.
\end{align*}
Then, for the second term, in view of $f\in W^{p}$, for $p\geq s+1$, we get that
\begin{align*}
|f^{(s)}(x)-f^{(s)}(x_t)|\leq |x-x_t| \|f^{(s+1)}\|_{L^\infty}\lesssim M_n^{-1}.
\end{align*}
Finally, for the third term, note that
\begin{align}
\E_f|(\hat{f}_n^{\hat{j}_n})^{(s)}(x_t)-(\hat{f}_n^{\hat{j}_n})^{(s)}(x)| 1_{\hat{j}_n\leq j_n^*}
&\leq \sum_{j\leq j_n^*}\E_f|(\hat{f}_n^{j})^{(s)}(x_t)-(\hat{f}_n^{j})^{(s)}(x)| 1_{\hat{j}_n=j}\nonumber\\
&\leq  |x-x_t| \sum_{j\leq j_n^*}\E_f \sup_{z\in [x,x_t]}| (\hat{f}_n^{j})^{(s+1)}(z)|1_{\hat{j}_n=j}.\label{eq:help:third}
\end{align}
Noting that  $|x-x_t|\leq 1/M_n$ it remained to give an upper bound for the second term on the right-hand side of \eqref{eq:help:third}. Also, for any $j\leq j_n^*$, 
\begin{align*}\sup_{z\in [x,x_t]} | (\hat{f}_n^{j})^{(s+1)}(z)| 
&\leq \sum_{\ell =j_0-1}^{j_n^*}\sum_{k\in\mathcal{M}_\ell} \|\tilde\psi_{\ell k}^{(s+1)}\|_{L^\infty}  |\bar{Z}_{\ell k}|.
\end{align*}
In view of \eqref{UB:basis:deriv}, \eqref{eq:pointwise:var} and \eqref{eq: bound l2 sum adj wav}
\begin{align}
\sum_{j\leq j_n^*}\E_f \sup_{z\in [x,x_t]}| (\hat{f}_n^{j})^{(s+1)}(z)|1_{\hat{j}_n=j}
&\leq \sum_{\ell =j_0-1}^{j_n^*}\sum_{\ell =j_0-1}^{j_n^*}\sum_{k\in\mathcal{M}_\ell}  \|\tilde\psi_{\ell k}^{(s+1)}\|_{L^\infty} \E_f |\bar{Z}_{\ell k}|\nonumber\\
&\leq \sum_{\ell =j_0-1}^{j_n^*}\sum_{\ell =j_0-1}^{j_n^*}\sum_{k\in\mathcal{M}_\ell}  \|\tilde\psi_{\ell k}^{(s+1)}\|_{L^\infty} \sqrt{\mathbb{V}_f \bar{Z}_{\ell k}}\nonumber\\
&\lesssim \sum_{\ell =j_0-1}^{j_n^*}\sum_{\ell =j_0-1}^{j_n^*}\sum_{k\in\mathcal{M}_\ell} \|\tilde\psi_{\ell k}^{(s+1)}\|_{L^\infty}\Big(  \frac{\sigma_{\alpha,\ell}}{\sqrt{n}} +\frac{1}{\sqrt{n}} \Big)\nonumber\\
&\lesssim \sum_{\ell =j_0-1}^{j_n^*}\sum_{\ell =j_0-1}^{j_n^*}  \frac{\sigma_{\alpha,\ell}+1}{\sqrt{n}}2^{\ell/2}  \Big(\sum_{k\in\mathcal{M}_\ell}  \|\tilde\psi_{\ell k}^{(s+1)}\|_{L^\infty}^2\Big)^{1/2}\nonumber\\
&\lesssim \sum_{\ell =j_0-1}^{j_n^*}\sum_{\ell =j_0-1}^{j_n^*}  \frac{\sigma_{\alpha,\ell}+1}{\sqrt{n}} 2^{\ell(s+2)}\nonumber\lesssim j_n^{*1+a} 2^{j_n^*(s+5/2)}n^{-1/2}\\
& \lesssim (\log n)^{c_a} \Big((n\alpha^2)^{\frac{(s-p)+3/2}{2+2p}}+n^{\frac{(s-p)+2}{1+2p}} \Big), \label{eq:UB:sup:deriv}
\end{align}
for some constant $c_a>0$.
 Hence, the right hand side of \eqref{eq:help:third} is bounded from above by $ (\log n)^{c_a} \big((n\alpha^2)^{\frac{(s-p)+3/2}{2+2p}}+n^{\frac{(s-p)+2}{1+2p}}\big) M_n^{-1}$ for $M_n\gtrsim n$ and $p\geq 1$ (recall that we assumed that $p\geq s+1$). Therefore, by substituting the above three upper bounds into \eqref{eq:pointwise}, we arrive at
\begin{align*}
\E_f|(\hat{f}_n^{\hat{j}_n}-f)^{(s)}(x)| 1_{\hat{j}_n\leq j_n^*}\lesssim   (n\alpha^2\log^{-2a} n)^{-\frac{p-s}{2p+2}}\vee n^{-\frac{p-s}{2p+1}}.
\end{align*}

It remained to consider the event $\hat{j}_n>j_n^*$. Then in view of Cauchy-Schwarz inequality, assertions \eqref{eq:UB:var} and  \eqref{eq:asymp:oracle}, Lemma \ref{lem:overshoot} and the definition of $j_n^*$, for large enough choice of $C_0$ (in the definition of $\tau$), we get the following upper bound for the $L^2$-error
\begin{align*}
\E_f\|(\hat{f}_n-f)^{(s)}\|_2 1_{\hat{j}_n> j_n^*}&\leq \sum_{j\in\mathcal{J},\, j>j_n^*} \sqrt{\E_f\|(\hat{f}_n^j-f)^{(s)}\|_{L^2}^2}\, \mathbb{P}(\hat{j}_n=j)^{1/2}\\
&\lesssim  \sum_{j\in\mathcal{J},\, j>j_n^*} n^{-1/2}j^{1/2}2^{js}(2^{j/2}+2^j j^a\alpha^{-1})  \sqrt{M_n j_{\max}}e^{-(C_0/2) j} \\
&\lesssim n^{-1/2} e^{-(C_0/2- (2+1/2)\log 2)j_n^*} 
=o\Big( \frac{1}{n(\alpha\vee1)^2}\Big).
\end{align*}

Finally, we deal with the point-wise case. Note that similarly to \eqref{eq:UB:sup:deriv}
\begin{align*}
\E_f((\hat{f}_n^j- f)^{(s)} (x))^2
&\leq \E_f((\hat{f}_n^j)^{(s)} (x))^2+f^2(x)\\
&\leq \sum_{\ell=j_0-1}^{j}\sum_{k\in\mathcal{M}_\ell}\|\tilde\psi_{\ell k}^{(s)}\|_{L^\infty}\Big(\frac{\sigma_{\alpha,\ell}}{\sqrt{n}}+\frac{1}{\sqrt{n}}\Big)+f^2(x)\\
&\lesssim \sum_{\ell=j_0-1}^{j} 2^{\ell(s+1)}\lesssim 2^{j(s+1)}.
\end{align*}

Then, in view of the definition of $j_n^*$, for large enough choice of $C_0$,
\begin{align*}
\E_f|(\hat{f}_n-f)^{(s)}(x)| 1_{\hat{j}_n> j_n^*}
&\leq  \sum_{j\in\mathcal{J},\, j>j_n^*} \sqrt{\E_f((\hat{f}_n^j- f)^{(s)} (x))^2}\, \mathbb{P}(\hat{j}_n=j)^{1/2}\\
&\lesssim  \sum_{j\in\mathcal{J},\, j>j_n^*}  \sqrt{M_n j_{\max}}2^{j(s+1)/2}e^{-(C_0/4) j}\\
&=o\Big( \frac{1}{n(\alpha\vee1)^2}\Big).
\end{align*}

\hfill\qed


\end{appendix}

\bibliographystyle{imsart-nameyear}
\bibliography{Lit.bib}{}

\end{document}